\DeclareMathAlphabet{\mathpzc}{OT1}{pzc}{m}{it}
\newcommand{\ld}{\text{\tiny\ensuremath{\bullet}}}
\newcommand{\dDl}{\Dl^{\ld}}
\renewcommand{\H}{H}
\DeclareMathOperator{\arccosh}{arccosh}
\newcommand{\asin}{\sin^{-1}}
\newcommand{\Wp}{\Ws}
\title{On Two-Sided Estimates for the Nonlinear Fourier Transform of {K}d{V}}
\author{Jan-Cornelius Molnar\footnote{Supported in part by the Swiss National Science Foundation}}
\date{\today}
\begin{document}

\maketitle

\begin{abstract}
The KdV-equation $u_t = -u_{xxx} + 6uu_x$ on the circle admits a global nonlinear Fourier transform, also known as Birkhoff map, linearizing the KdV flow. The regularity properties of $u$ are known to be closely related to the decay properties of the corresponding nonlinear Fourier coefficients. In this paper we obtain two-sided polynomial estimates of all integer Sobolev norms $||u||_m$, $m\ge 0$, in terms of the weighted norms of the nonlinear Fourier transformed, which are linear in the highest order. We further obtain quantitative estimates of the nonlinear Fourier transformed in arbitrary weighted Sobolev spaces.

\paragraph{Keywords and phrases.} nonlinear Fourier transform, Korteweg-de Vries equation, integrable PDEs

\paragraph{Mathematics Subject Classification (2000).} 37K15 (primary) 35Q53, 37K10 (secondary)

\end{abstract}

\section{Introduction}

We consider the \emph{\kdv equation}
\[
  u_t = -u_{xxx} + 6uu_x
\]
on the circle $\T = \R/\Z$ with $u$ real-valued. As is well known, the \kdv equation can be written as an infinite-dimensional Hamiltonian system
\[
  \partial_{t}u = \partial_{x} \partial_{u} \H,\qquad \H(u) = \frac{1}{2}\int_\T (u_x^2 + 2u^3)\,\dx,
\]
where we choose the standard Sobolev space $\Hs^{m} \defl \Hs^{m}(\T,\R)$, $m\ge 0$, for the phase space, with the $L^{2}$-inner product
\[
  \lin{u,v} \defl \int_{\T} uv \,\dx,
\]
and endow this space with the Poisson structure proposed by Gardner
\[
  \setd{F,G}
   =
  \int_\T \partial_{u}F\partial_{x}\partial_{u}G\,\dx.
\]
The mean value $[u]$ is a Casimir for this bracket and hence is preserved by the \kdv flow. On the invariant subspace
\[
  \Hs^{m}_{0} \defl \setdef{u\in \Hs^{m}}{[u] = 0},
\]
the Poisson structure is nondegenerate and the \kdv evolution is given by
\[
  u_t = \setd{u,H}.
\]

As an infinite-dimensional Hamiltonian system, the \kdv equation  is well known to be completely integrable. According to \cite{Kappeler:2003up}, this is true in the strongest possible sense meaning that \kdv admits \emph{global Birkhoff coordinates} $(x_{n},y_{n})_{n\ge 1}$. To give a precise statement of our results, we define for $u = \sum_{n\in\Z} u_{n}\e^{\ii 2n\pi x}$ in $\Hs_{0}^{m}$ the Sobolev norm $\n{u}_{m}$ by
\[
  \n{u}_{m}^{2} \defl \sum_{n\in\Z} \lin{2n\pi}^{2m} \abs{u_{n}}^{2},\qquad
   \lin{\al} \defl 1+\abs{\al}.
\]
Further, we introduce the model space $h_{\star}^{m} \defl \ell_{m+1/2}^{2}(\N,\R)\times \ell_{m+1/2}^{2}(\N,\R)$ with elements $(x,y) = (x_{n},y_{n})_{n\ge 1}$ where the norm $\n{(x,y)}_{h_{\star}^{m}}$ is defined by
\[
  \n{(x,y)}_{h_{\star}^{m}}^{2} \defl \sum_{n\ge 1} (2n\pi)^{2m+1}(x_{n}^{2}+y_{n}^{2}).
\]
This space is endowed with the canonical Poisson structure $\{x_{n},y_{m}\} = -\{y_{m},x_{n}\} = \dl_{nm}$ while all other brackets vanish.

The \emph{Birkhoff map} $u\mapsto (x_{n},y_{n})_{n \ge 1}$ is a bi-analytic, canonical diffeomorphism $\Om\colon \Hs^{0}_{0}\to h^{0}_{\star}$, whose restriction $\Om\colon \Hs^{m}_{0}\to h_{\star}^{m}$ is again bi-analytic for any integer $m\ge 1$. On $h^{1}_{\star}$ the transformed \kdv Hamiltonian $H\circ\Om^{-1}$ is a real-analytic function of the actions $I_{n} = (x_{n}^{2}+y_{n}^{2})/2$ alone and the equations of motion take the particular simple form
\[
  \dot x_{n} = -\om_{n} y_{n},\quad
  \dot y_{n} =  \om_{n} x_{n},\qquad
  \om_{n} \defl \partial_{I_{n}} \H.
\]
The mapping $\Om$ may thus be viewed as a nonlinear Fourier transform for the \kdv equation. Furthermore,  the derivative $\ddd_{0}\Om$ of $\Om$ at the origin \emph{is} a weighted Fourier transform, and on $\Hs_{0}^{0}$
\begin{equation}
  \label{p-id}
  \n{\Om(u)}_{h_{\star}^{0}} = \n{u}_{0},
\end{equation}
which we refer to as Parseval's identity -- cf. e.g. \cite{Kappeler:2003up}. Our main result says that for higher order Sobolev norms the following version of Parseval's identity holds for the nonlinear map $\Om$.

\begin{thm}
\label{b-est}
For any integer $m\ge 1$ there exist absolute constants $c_{m}$, $d_{m} > 0$ such that
the restriction of $\Om$ to $\Hs_{0}^{m}$ satisfies the two-sided estimates
\[
  \emph{ (i)}\quad
  \n{\Om(u)}_{h_{\star}^{m}} \le c_{m}\bigl(
  	\n{u}_{m} + (1+\n{u}_{m-1})^{m}\n{u}_{m-1}	\bigr),
\]
and
\[
  \emph{(ii)}\quad
  \n{u}_m \le d_{m}\bigl(
  	\n{\Om(u)}_{h_{\star}^{m}} +
	(1+\n{\Om(u)}_{h_{\star}^{m-1}})^{m}\n{\Om(u)}_{h_{\star}^{m-1}} \bigr).\fish
\]
\end{thm}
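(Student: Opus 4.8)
The plan is to derive both estimates from quantitative bounds on the \emph{differential} of $\Om$, by integration along the ray $t\mapsto tu$, running an induction on $m$ whose base case $m=0$ is the Parseval identity \eqref{p-id}. Since $\Om(0)=0$ we may write $\Om(u)=\int_{0}^{1}\ddd_{tu}\Om\cdot u\,dt$, so that $\n{\Om(u)}_{h_{\star}^{m}}\le\int_{0}^{1}\n{\ddd_{tu}\Om\cdot u}_{h_{\star}^{m}}\,dt$ and (i) reduces to estimating the integrand. Likewise, writing $\Psi\defl\Om^{-1}$, which again fixes the origin, one has $\Psi(z)=\int_{0}^{1}\ddd_{tz}\Psi\cdot z\,dt$ with $\ddd_{tz}\Psi=(\ddd_{\Psi(tz)}\Om)^{-1}$; bounding this integrand in terms of the weighted norms of $z$ and then substituting $z=\Om(u)$ produces (ii), the lower-order terms being controlled by (i) at order $m-1$ from the induction hypothesis.

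The core is the following splitting of the differential: for $u\in\Hs_{0}^{m}$ write $\ddd_{u}\Om=A+R_{u}$, where $A\defl\ddd_{0}\Om$ is the weighted Fourier transform --- which by the higher-order form of Parseval's identity satisfies $\n{Av}_{h_{\star}^{m}}\asymp\n{v}_{m}$ for every $m\ge0$, with constants depending only on $m$ --- and where $R_{u}$ is \emph{relatively one step smoothing} with polynomially controlled norm, $\n{R_{u}v}_{h_{\star}^{m}}\le P_{m}(\n{u}_{m-1})\,\n{v}_{m-1}$ for a polynomial $P_{m}$ with $P_{m}(r)\le C(1+r)^{m}$. Granting this, insert $v=u$, replace $u$ by $tu$ and integrate in $t$: the term $\n{Au}_{m}\asymp\n{u}_{m}$ supplies the linear-in-highest-order contribution, while $\int_{0}^{1}P_{m}(t\n{u}_{m-1})\,\n{u}_{m-1}\,dt\le C(1+\n{u}_{m-1})^{m}\n{u}_{m-1}$ supplies the correction, which is exactly (i).

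To obtain the splitting I would pass to the spectral data underlying $\Om$: the Birkhoff coordinates are built from the periodic and Dirichlet spectra of the Schr\"odinger operator $-\partial_{x}^{2}+u$, with $I_{n}=(x_{n}^{2}+y_{n}^{2})/2$ comparable to $\gamma_{n}^{2}/n$, where $\gamma_{n}$ is the $n$th spectral gap, and the angles $\theta_{n}$ given by a normalized abelian integral. The gradients $\partial_{u}x_{n}$ and $\partial_{u}y_{n}$ admit explicit expressions built from products of fundamental solutions of the Schr\"odinger equation; after subtracting the values at $u=0$, each surviving term carries an extra factor that is either a gap length $\gamma_{n}$ --- hence essentially a Fourier coefficient of $u$, so that the derivative lost on $v$ is restored when summing against the $h_{\star}^{m}$-weight --- or a spectral remainder with the requisite decay, and in every case the constants are governed by the classical two-sided bounds on eigenvalues, eigenfunctions, and on the discriminant $\Delta(\lambda,u)$ in terms of $\n{u}_{m-1}$ and, ultimately, of $\n{u}_{0}$. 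For (ii) one inverts the splitting: from $\ddd_{u}\Om=A(\mathrm{Id}+A^{-1}R_{u})$ one gets $(\ddd_{u}\Om)^{-1}=A^{-1}-A^{-1}R_{u}(\ddd_{u}\Om)^{-1}$, and, applied to $z$ with $u=\Psi(tz)$,
\[
  \n{(\ddd_{u}\Om)^{-1}z}_{m}\le C\,\n{z}_{h_{\star}^{m}}+C\,P_{m}(\n{u}_{m-1})\,\n{(\ddd_{u}\Om)^{-1}z}_{m-1};
\]
the last norm equals $\n{\ddd_{tz}\Psi\cdot z}_{m-1}$ and, together with $\n{u}_{m-1}=\n{\Psi(tz)}_{m-1}$, is controlled at order $m-1$ by the induction hypothesis (in particular by (ii) at that order), which closes the estimate and yields (ii) at order $m$ after integration in $t$.

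I expect the genuine obstacle to be precisely the quantitative one-smoothing bound for $R_{u}$ with the sharp feature that nothing is lost at the top Sobolev level: one must verify that no truly nonlinear term in the gradient formulas contributes at order $m$, i.e.\ that each such term either costs a derivative on $v$ or comes weighted by a gap length, and --- crucially --- that the accompanying constants deteriorate only polynomially, not exponentially, in $\n{u}_{m-1}$. Carrying this uniformity through the eigenfunction asymptotics and the Its--Matveev-type expansion of the angle variables, and organizing the induction so that the accumulated lower-order errors collapse into the stated $(1+\cdot)^{m}(\cdot)$ shape, is the delicate part; by comparison the passage to the differential and the resolvent-type inversion are routine.
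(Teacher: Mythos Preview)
Your approach is genuinely different from the paper's, and the central step you flag as ``the genuine obstacle'' is indeed a real gap, not a routine verification.

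\textbf{What the paper actually does.} The paper never touches the differential $\ddd_{u}\Om$. Instead it uses the elementary identity
\[
  \n{\Om(u)}_{h_{\star}^{m}}^{2} \;=\; 2\sum_{n\ge 1}(2n\pi)^{2m+1}I_{n} \;=\; 2\n{I(u)}_{\ell^{1}_{2m+1}},
\]
so that Theorem~\ref{b-est} reduces immediately to Theorem~\ref{act-sob-est} on the actions. The latter is proved by introducing the level-$m$ actions $J_{n,m}$, showing $J_{n,m}\asymp (n\pi)^{2m}I_{n}$ for $n\gtrsim\n{q}_{0}$ via the eigenvalue localization, and invoking the trace formula
\[
  4^{m}\sum_{n\ge 1}(2n\pi)J_{n,m} \;=\; H_{m}(q) - 2\sum_{k=0}^{m-2}H_{m-2-k}H_{k}.
\]
The exponent $m$ in the remainder $(1+\n{u}_{m-1})^{m}\n{u}_{m-1}$ is then read off from the homogeneity degree $m+2$ of the polynomial $p_{m}$ in $H_{m}=\tfrac12\int(\partial_{x}^{m}q)^{2}+p_{m}$. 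For (ii) the argument runs backwards: the trace formula bounds $H_{m}$ by $\n{I}_{\ell^{1}_{2m+1}}$ plus lower-order action norms, and an induction handles the $p_{m}$-integral. No differentials, no integration along rays, no inversion.

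\textbf{Where your proposal stalls.} Everything hinges on the smoothing bound
\[
  \n{R_{u}v}_{h_{\star}^{m}} \;\le\; C_{m}(1+\n{u}_{m-1})^{m}\,\n{v}_{m-1},
\]
with constants depending only on $\n{u}_{m-1}$ and with the \emph{specific} exponent $m$. The qualitative $1$-smoothing of $\Om-\ddd_{0}\Om$ in the literature asserts that this remainder maps $H^{m}_{0}$ to $h_{\star}^{m+1}$ locally boundedly; it does \emph{not} say that the differential $R_{u}$ is bounded from $H^{m-1}$ into $h_{\star}^{m}$ with operator norm controlled by $\n{u}_{m-1}$ alone, nor that the control is polynomial of degree $m$. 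You would have to extract this refined tame estimate from the explicit gradient formulae for $x_{n},y_{n}$, tracking both the decay in $n$ and the growth in $\n{u}_{m-1}$ through all the eigenfunction asymptotics, product expansions, and abelian integrals---a computation at least as heavy as the paper's, and one that does not obviously deliver the exponent $m$ (in the paper this exponent comes for free from the algebraic degree of $p_{m}$). Your inversion step for (ii) then compounds the issue, since the Neumann series for $(\ddd_{u}\Om)^{-1}$ demands the same tame bound uniformly along the ray $t\mapsto t\Om(u)$, together with the inductive control of $\n{\Psi(tz)}_{m-1}$.

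In short: the strategy is coherent, but what you label ``routine'' (the passage to the differential and the resolvent inversion) is routine only \emph{given} the sharp quantitative smoothing lemma, and that lemma is precisely where all the content lies. The paper bypasses it entirely by working with actions and trace formulae.
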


The estimates (i) and (ii) are reminiscent of the 1-smoothing property of the Birkhoff map $\Om$ established in \cite{Kappeler:2012fa} as they are linear in the highest Sobolev norm $\n{u}_{m}$ and the highest weighted $h_{\star}^{m}$-norm $\n{\Om(u)}_{m}$, respectively.

\begin{cor}
\label{s-est}
Suppose $u(t)$ is a solution of the \kdv equation with initial value $u_{0}\in \Hs^{m}_{0}$, $m\in \Z_{\ge 1}$. Then there exists an absolute constant $\al_{m}$ such that for all time $t\in\R$,
\[
  \n{u(t)}_{m} \le \al_{m}\left(\n{u_{0}}_{m} + (1+\n{u_{0}}_{m-1})^{m^{2}+m-1}\n{u_{0}}_{m-1}\right).\fish
\]
\end{cor}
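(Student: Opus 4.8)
The plan is to combine the lower bound of Theorem~\ref{b-est} applied to $u(t)$ with the elementary observation that the weighted Birkhoff norms $\n{\Om(\cdot)}_{h_{\star}^{k}}$ are conserved along the \kdv flow, and then to feed in the upper bound of Theorem~\ref{b-est} applied to the initial datum $u_{0}$. First I would apply Theorem~\ref{b-est}(ii) to $u(t)$, which gives
\[
  \n{u(t)}_{m} \le d_{m}\bigl(\n{\Om(u(t))}_{h_{\star}^{m}} + (1+\n{\Om(u(t))}_{h_{\star}^{m-1}})^{m}\n{\Om(u(t))}_{h_{\star}^{m-1}}\bigr).
\]

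Next I would remove the time dependence from the right-hand side. In Birkhoff coordinates the \kdv flow is the rotation $\dot x_{n} = -\om_{n}y_{n}$, $\dot y_{n} = \om_{n}x_{n}$, so every action $I_{n} = (x_{n}^{2}+y_{n}^{2})/2$ satisfies $\dot I_{n} = 0$. Since $\n{\Om(v)}_{h_{\star}^{k}}^{2} = 2\sum_{n\ge1}(2n\pi)^{2k+1}I_{n}$ depends on the point $v$ only through its actions, it follows that $\n{\Om(u(t))}_{h_{\star}^{k}} = \n{\Om(u_{0})}_{h_{\star}^{k}}$ for all $t\in\R$ and all integers $k\ge0$; in particular this applies at levels $k=m$ and $k=m-1$.

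It then remains to bound $\n{\Om(u_{0})}_{h_{\star}^{m}}$ and $\n{\Om(u_{0})}_{h_{\star}^{m-1}}$ in terms of Sobolev norms of $u_{0}$. For the first I would use Theorem~\ref{b-est}(i) at level $m$ directly. For the second, if $m\ge2$ I would use Theorem~\ref{b-est}(i) at level $m-1$ together with the trivial bound $\n{u_{0}}_{m-2}\le\n{u_{0}}_{m-1}$, obtaining $\n{\Om(u_{0})}_{h_{\star}^{m-1}}\le 2c_{m-1}(1+\n{u_{0}}_{m-1})^{m-1}\n{u_{0}}_{m-1}$, while for $m=1$ the same type of bound holds by Parseval's identity \eqref{p-id}, which gives $\n{\Om(u_{0})}_{h_{\star}^{0}}=\n{u_{0}}_{0}$. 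Substituting these into the displayed inequality and simplifying with crude estimates of the shape $1+a(1+a)^{m-1}\le(1+2c)(1+a)^{m}$, one finds that $(1+\n{\Om(u_{0})}_{h_{\star}^{m-1}})^{m}\n{\Om(u_{0})}_{h_{\star}^{m-1}}$ is at most a constant depending only on $m$ times $(1+\n{u_{0}}_{m-1})^{m^{2}+m-1}\n{u_{0}}_{m-1}$, and since $m^{2}+m-1\ge m$ for $m\ge1$ the lower-order term $(1+\n{u_{0}}_{m-1})^{m}\n{u_{0}}_{m-1}$ coming from the level-$m$ estimate is absorbed into it. Choosing $\al_{m}$ equal to $d_{m}$ times the largest of the constants that appear then yields the stated inequality.

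There is no serious analytic difficulty here: the substance is already contained in Theorem~\ref{b-est}, and the single genuinely new input is that the $h_{\star}^{k}$-norms are invariants of the \kdv flow, which holds precisely because that flow is a rotation in each coordinate plane $(x_{n},y_{n})$. The only point that demands care is the bookkeeping of polynomial degrees — in particular, checking that composing the degree-$m$ growth in part (ii) with the degree-$(m-1)$ growth at level $m-1$ in part (i) produces exactly the exponent $m^{2}+m-1$, and that every lower-degree contribution is dominated by that term. I would also remark in passing that, since the resulting bound is finite and independent of $t$, it reproves global-in-time boundedness of $\Hs^{m}_{0}$ solutions.
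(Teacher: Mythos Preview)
Your proposal is correct and is precisely the argument the paper has in mind: the corollary is stated without an explicit proof because it is an immediate consequence of Theorem~\ref{b-est} together with the conservation of the actions $I_{n}$ (and hence of all $h_{\star}^{k}$-norms) along the \kdv flow. Your bookkeeping of the exponents, leading to $m^{2}+m-1$ via $(1+A)^{m}A$ with $A\lesssim (1+\n{u_{0}}_{m-1})^{m-1}\n{u_{0}}_{m-1}$, is exactly what is needed.
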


The proof of Theorem~\ref{b-est} relies on estimates for the \kdv action variables $I(u) = (I_n)_{n\ge 1}$ where $I_{n} = (x_{n}^{2} + y_{n}^{2})/2$, $n\ge 1$. The decay properties of the actions are  closely related to the regularity properties of $u$ -- cf. \cite{Kappeler:1999er,Kappeler:2001hsa,Djakov:2006ba,Poschel:2011iua}. We quantify this relationship by providing two-sided estimates of the Sobolev norms of $u$ in terms of weighted $\ell^{1}$-norms of $I(u)$. For that purpose introduce for $s$ real the weighted sequence space $\ell^{1}_{s}$ whose norm is defined by
\[
  \n{I(u)}_{\ell^{1}_{s}} \defl \sum_{n \ge 1} (2n\pi)^{s} I_{n}.
\]

\begin{thm}
\label{act-sob-est}
For any integer $m\ge 1$ there exist absolute constants $c_m$, $d_{m} > 0$, such that
\[
  \emph{ (i)}\quad
  \n{I(u)}_{\ell^{1}_{2m+1}}
   \le c_{m}^{2}\Bigl(\n{u}_{m}^{2} + (1+\n{u}_{m-1})^{2m}\n{u}_{m-1}^{2}\Bigr),
\]
as well as
\[
  \emph{(ii)}\quad
  \n{u}_{m}^{2} \le d_{m}^{2}\Bigl(\n{I(u)}_{\ell^{1}_{2m+1}}
                   + (1+\n{I(u)}_{\ell^{1}_{2m-1}})^m\n{I(u)}_{\ell^{1}_{2m-1}}\Bigr),
\]
for all $u\in\Hs_{0}^{m}$.\fish
\end{thm}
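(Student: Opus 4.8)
The plan is to exploit the classical link between the KdV actions $I_n$ and the spectral gaps $\gamma_n$ of the Schr\"odinger operator $-\partial_x^2 + u$ with periodic/antiperiodic boundary conditions, together with the known asymptotics relating $\gamma_n$ to the Fourier coefficients $u_{\pm n}$ of $u$. Recall that $I_n$ is comparable to $\gamma_n^2$ up to factors depending on the location of the $n$-th gap, and that $\gamma_n^2 \sim 4|u_n|^2 + (\text{lower order})$, where the lower-order contribution is controlled by a convolution-type quadratic expression in the $u_k$ that is one derivative smoother. Concretely, I expect to use an identity of the shape $\gamma_n^2 = 4|\hat u(n)|^2 + r_n$ with $\sum_n \langle 2n\pi\rangle^{2m+1} |r_n|$ bounded by $(1+\n{u}_{m-1})^{2m}\n{u}_{m-1}^2$-type quantities, which is exactly the kind of "one-smoothing with polynomial loss in the lower norm" estimate that underlies the statement. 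This reduces both inequalities to comparing $\sum_n \langle 2n\pi\rangle^{2m+1}|\hat u(n)|^2$ with $\n{u}_m^2$, which is immediate from the definition of the Sobolev norm.

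For part~(i), I would start from $\n{I(u)}_{\ell^1_{2m+1}} = \sum_n (2n\pi)^{2m+1} I_n$, bound $I_n \lesssim \gamma_n^2$ (using that the $n$-th periodic eigenvalue is $\approx n^2\pi^2$, so the weight factors coming from $\partial_{I_n}$ of the relevant generating function are harmless), then insert the expansion $\gamma_n^2 = 4|\hat u(n)|^2 + r_n$. The main term gives $\lesssim \n{u}_m^2$ after absorbing the factor $(2n\pi)^{2m+1}$ against $\langle 2n\pi\rangle^{2m}$ (and noting $\sum (2n\pi)|\hat u(n)|^2 \le \n{u}_{m}^2$ trivially once $m\ge 1$, or more carefully splitting high/low frequencies). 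The remainder term is where the genuine work sits: one must show $\sum_n (2n\pi)^{2m+1}|r_n| \lesssim (1+\n{u}_{m-1})^{2m}\n{u}_{m-1}^2$. Here I would use the explicit form of $r_n$ as a sum of products of Fourier coefficients of $u$ with a fixed number of factors, estimate each term by Young's inequality for weighted convolutions, and track the polynomial dependence on $\n{u}_{m-1}$ arising from the higher-order (in $u$) pieces of the gap expansion.

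For part~(ii) I run the comparison in the reverse direction: $\n{u}_m^2 = \sum_n \langle 2n\pi\rangle^{2m}|\hat u(n)|^2$, and using $4|\hat u(n)|^2 = \gamma_n^2 - r_n \le \gamma_n^2 + |r_n|$ together with $\gamma_n^2 \lesssim I_n$ (again the location of the gap contributes only a bounded factor for $n\ge 1$), I get $\n{u}_m^2 \lesssim \n{I(u)}_{\ell^1_{2m+1}} + \sum_n \langle 2n\pi\rangle^{2m}|r_n|$. The remainder sum is then estimated by the same convolution bounds as in (i), but now the output is expressed in terms of $\n{I(u)}_{\ell^1_{2m-1}}$ rather than $\n{u}_{m-1}$; this requires first establishing the lower-order version of the theorem (an induction on $m$, with the base case $m=0$ being Parseval's identity \eqref{p-id} rephrased via $\n{I(u)}_{\ell^1_1}\approx\n{u}_0^2$) so that $\n{u}_{m-1}$ can be traded for $\n{I(u)}_{\ell^1_{2m-1}}$ at the cost of the stated polynomial factor.

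The principal obstacle is obtaining the remainder estimate $\sum_n \langle 2n\pi\rangle^{2s}|r_n| \lesssim (1+\text{lower norm})^{2m}(\text{lower norm})^2$ with the correct \emph{linear} behavior in the top norm and the correct polynomial exponent in the lower one; this demands a careful bookkeeping of the multilinear expansion of $\gamma_n^2$ (or of the action $I_n$) in powers of $u$, combined with weighted Young/Schur estimates in which exactly one frequency variable carries the top weight $\langle\cdot\rangle^{2m}$ while the remaining factors are absorbed into $\n{u}_{m-1}$-norms. I would isolate this as a separate lemma and prove it by splitting the expansion into its quadratic part (handled by hand, yielding the sharp constants $c_m,d_m$) and its cubic-and-higher part (handled by a uniform convolution bound with the polynomial loss), mirroring the structure of the one-smoothing estimates of \cite{Kappeler:2012fa}.
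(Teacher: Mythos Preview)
Your approach is genuinely different from the paper's, and there is a real gap in part~(ii).

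\textbf{What the paper does.} The paper never compares $\gamma_n^2$ with $4|\hat u(n)|^2$. Instead it introduces the higher-level actions
\[
  J_{n,m} = \frac{1}{(m+1)\pi}\int_{\Gm_n}\frac{\lm^{m+1}\dDl}{\sqrt[c]{\Dl^2-4}}\,\dlm,
\]
shows $J_{n,m}\sim (n\pi)^{2m}I_n$ for $n\ge 4\n{u}_0$ via the eigenvalue localisation, and proves the trace formula
\[
  4^m\sum_{n\ge 1}(2n\pi)J_{n,m} = H_m - 2\sum_{k=0}^{m-2}H_{m-2-k}H_k.
\]
Both inequalities then reduce to estimating the $m$th KdV Hamiltonian $H_m = \tfrac12\n{\partial_x^m u}_0^2 + \tfrac12\int p_m(u,\dots,\partial_x^{m-1}u)\,\dx$. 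The polynomial $p_m$ is homogeneous of degree $m+2$ (counting $u$ as degree $1$ and $\partial_x$ as degree $1/2$), which is the structural fact driving the sharp exponents.

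\textbf{The gap in your part~(ii).} Suppose you succeed in proving $\sum_n(2n\pi)^{2m}|r_n|\lesssim (1+\n{u}_{m-1})^{2m}\n{u}_{m-1}^2$. To convert the right-hand side to $I$-norms you invoke the inductive hypothesis $\n{u}_{m-1}^2\lesssim (1+\n{I}_{\ell^1_{2m-1}})^{m-1}\n{I}_{\ell^1_{2m-1}}$, hence $\n{u}_{m-1}\lesssim (1+\n{I}_{\ell^1_{2m-1}})^{m/2}$. Substituting gives
\[
  (1+\n{u}_{m-1})^{2m}\n{u}_{m-1}^2
   \lesssim (1+\n{I}_{\ell^1_{2m-1}})^{m^2+m-1}\n{I}_{\ell^1_{2m-1}},
\]
i.e.\ an exponent of order $m^2$, not $m$. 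Your scheme treats the remainder as a black box graded only by $\n{u}_{m-1}$, so the loss is unavoidable. The paper avoids this by estimating $\int p_m\,\dx$ monomial by monomial directly in terms of the $I$-norms: each monomial $|u|^{2\sg_0}\dotsm|\partial_x^{m-1}u|^{2\sg_{m-1}}$ has $\sum_i(2+i)\sg_i=m+2$, and applying the inductive bound $\n{\partial_x^j u}_0^2\lesssim (1+\n{I}_{\ell^1_{2m-1}})^{j}\n{I}_{\ell^1_{2m-1}}$ factor by factor yields exactly exponent $m$. Without access to this polynomial structure, the sharp exponent is out of reach.

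\textbf{Smaller issues.} Your comparisons $I_n\lesssim \gamma_n^2$ and $\gamma_n^2\lesssim I_n$ are off by a factor of $n$ (the correct relation is $(8n\pi)I_n\sim\gamma_n^2$); the weight bookkeeping still closes once this is fixed. More seriously, this asymptotic is only valid for $n\ge C\n{u}_0$; the finitely many low modes must be handled separately, which you do not address. Finally, the remainder bound you assert for $r_n$ with the specific exponent $2m$ is itself nontrivial and is not, to my knowledge, available in the literature in that form.
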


Note that in the estimates (i) and (ii) of Theorem~\ref{act-sob-est} the corresponding exponents are of the same order since the action variables have to be viewed as quadratic quantities.

For the direct problem, that is estimates (i) of Theorems~\ref{b-est} \& \ref{act-sob-est}, we obtain estimates which hold for a larger family of spaces referred to as \emph{weighted Sobolev spaces} -- see \cite{Kappeler:1999er,Kappeler:2001hsa} for an introduction. A \emph{normalized, symmetric, submultiplicative}, and \emph{monotone weight} is a function $w\colon\Z\to\R$  with
\[
  w_n \ge 1,\qquad w_{n} = w_{-n},\qquad w_{n+m}\le w_{n}w_{m},\qquad
  w_{\abs{n}}\le w_{\abs{n}+1},
\]
for all $n,m\in\Z$. The class of all such weights is denoted by $\Ms$ and $\Hs_{0}^w$ is the space of $\Hs_{0}^{0}$ functions $u$ with finite $w$-norm
\[
  \n{u}_w^2 \defl \sum_{n\in\Z} w_{2n}^2 \abs{u_{n}}^{2}.
\]
Further, $h_{\star}^{w}\subset h_{\star}^{0}$ denotes the elements $(x,y)$ with $\n{x}_{w}^{2} + \n{y}_{w}^{2} < \infty$, where
\[
  \n{x}_{w}^{2} \defl \sum_{n\ge 1} (2n\pi)w_{2n}^{2}\abs{x_{n}}^{2}.
\]

For any $s \ge 0$, the \emph{Sobolev weight} $\lin{n\pi}^{s}$ gives rise to the usual Sobolev space $\Hs^{s}_{0}$. For $s\ge 0$ and $a > 0$, the \emph{Abel weight} $\lin{n\pi}^s \e^{a\abs{n}}$ gives rise to the space $\Hs_{0}^{s,a}$ of $L^2$-functions, which can be analytically extended to the open strip $\setdef{z}{\abs{\Im z} < a/2\pi}$ of the complex plane with traces in $\Hs_{0}^s$ on the boundary lines. In between are, among others, the \emph{Gevrey weights}
\[
  \lin{n}^s \e^{a\abs{n}^\sigma},\qquad 0< \sigma < 1,\quad s\ge 0,\quad a > 0,
\]
which give rise to the Gevrey spaces $\Hs^{s,a,\sigma}_{0}$, as well as weights of the form
\[
  \lin{n}^s\exp\biggl(\frac{a\abs{n}}{1+\log^{\sg}\lin{n}} \biggr),
    \qquad 0< \sigma < 1,\quad s\ge 0,\quad a > 0,
\]
that are lighter than Abel weights but heavier than Gevrey weights.

We assume all weights $w\in\Ms$ to be piecewise linearly extended to functions on the real line $w\colon \R\to [1,\infty)$, $t\mapsto w(t)$.

\begin{theorem}
\label{act-west}
For any weight $w = \lin{n}^{r}v$ with $0\le r\le 1/2$ and $v\in\Ms$ there exists an absolute constant $c_{w,r} > 0$ such that the restriction of the Birkhoff map $\Om$ to $\Hs_{0}^{w}$ takes values in $h_{\star}^{w}$ and satisfies
\[
  \emph{ (i)}\quad
  \n{\Om(u)}_{h_{\star}^{w}} \le c_{w,r}\left((1+\n{u}_{w}) + w(16\n{u}_{w}^{3/2-r})\right)\n{u}_{w}.
\]
Moreover, we find for the action variables on $\Hs_{0}^{w}$
\[
  \emph{(ii)}\quad\sum_{n\ge 1} w_{2n}^{2}(2n\pi)I_{n}
   \le
  c_{w,r}^{2}\left((1+\n{u}_{w})^{2} + \bigl(w(16\n{u}_{w}^{3/2-r})\bigr)^{2}\right)\n{u}_{w}^{2}.\fish
\]
\end{theorem}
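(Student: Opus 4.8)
The plan is to deduce both inequalities from one weighted estimate for the \kdv action variables, which in turn is handled by splitting a single sum into low and high frequencies. Since $x_n^2+y_n^2=2I_n$, one has
\[
  \n{\Om(u)}_{h_\star^w}^2 \;=\; \sum_{n\ge1}(2n\pi)\,w_{2n}^2\,(x_n^2+y_n^2) \;=\; 2\sum_{n\ge1}(2n\pi)\,w_{2n}^2\,I_n,
\]
so (i) and (ii) are equivalent up to the value of the constant, and it suffices to bound $\sum_{n\ge1}(2n\pi)w_{2n}^2 I_n$. The first ingredient is the representation of the actions through the periodic spectrum of the Hill operator $-\partial_x^2+u$ associated with \kdv: writing $\gamma_n$ for the $n$-th spectral gap, the known estimates for $I_n$ reduce $\sum_n(2n\pi)w_{2n}^2 I_n$ to $\sum_n w_{2n}^2\gamma_n^2$ up to correction terms controlled by the same data, so everything comes down to estimating $\sum_n w_{2n}^2\gamma_n^2$ together with these corrections.

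Write $w=\lin{n}^r v$ with $0\le r\le 1/2$, $v\in\Ms$, and fix the frequency threshold $N=8\n{u}_w^{3/2-r}$, chosen so that $w_{2n}\le w(2N)\le w(16\n{u}_w^{3/2-r})$ for $n\le N$ by monotonicity. For the low frequencies $n\le N$ one only uses the crude bound coming from Parseval's identity \eqref{p-id}: since $\sum_n(2n\pi)\cdot 2I_n=\n{u}_0^2$ and $\n{u}_0\le\n{u}_w$, the low-frequency part is at most
\[
  w(2N)^2\,\tfrac12\n{u}_0^2 \;\le\; \tfrac12\,w(16\n{u}_w^{3/2-r})^2\,\n{u}_w^2,
\]
which already has the required shape.

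For the high frequencies $n>N$ the threshold has been taken large enough that the periodic eigenvalues of index beyond $N$ are localized near $n^2\pi^2$ (a Counting-Lemma type statement), so one may peel off the leading, linear contribution --- which is comparable to $\sum_n w_{2n}^2\abs{u_n}^2=\n{u}_w^2$, reflecting that $\ddd_0\Om$ is the weighted Fourier transform --- and estimate the nonlinear remainder in the $w$-weighted norm by a quantity of lower order, in the spirit of the one-smoothing property \cite{Kappeler:2012fa}. This gives $\sum_{n>N}w_{2n}^2\gamma_n^2$, and the analogous sum for the corrections, bounded by $C_{w,r}(1+\n{u}_w)^2\n{u}_w^2$, and adding the two regimes yields (ii) and hence (i).

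The hard part will be precisely this high-frequency estimate, and in particular the verification that the split can be made at $N\simeq\n{u}_w^{3/2-r}$: one must track, in the $w$-weighted norm, how the second- and higher-order corrections to the periodic spectrum of $-\partial_x^2+u$ propagate, and show that beyond index $N$ their weighted sum is dominated by $\n{u}_w^2$ (while below $N$ the blunt Parseval bound, paid for by the factor $w(16\n{u}_w^{3/2-r})^2$, is all that is available). This is also where the restriction $0\le r\le 1/2$ enters essentially --- it keeps the borderline convolution-type term in the second-order correction to the $n$-th periodic eigenvalue under control --- and the interplay between the $L^2$-scaling and the weight degree $r$ is what produces the exponent $3/2-r$ in the final bound.
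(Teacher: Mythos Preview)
Your overall architecture is right and matches the paper: the reduction of (i) to (ii) via $x_n^2+y_n^2=2I_n$, the low/high split at a threshold $N\asymp\n{u}_w^{3/2-r}$, and the low-frequency bound using Parseval together with the monotonicity of $w$ are all exactly as in the paper's proof.

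The gap is in how you handle the high-frequency part. You write that ``the known estimates for $I_n$ reduce $\sum_n(2n\pi)w_{2n}^2 I_n$ to $\sum_n w_{2n}^2\gm_n^2$'' and then locate the difficulty in bounding $\sum_{n>N}w_{2n}^2\gm_n^2$ via second-order corrections to the periodic eigenvalues. This is backwards. The relation $8n\pi I_n/\gm_n^2 = 1+O(\log n/n)$ is only known to hold \emph{locally uniformly} on $\Hs_0^0$; it does not by itself give a bound $(2n\pi)I_n\le C\gm_n^2$ with $C$ depending only on $\n{u}_w$. Obtaining a globally uniform version---concretely, $(2n\pi)I_n\le C(1+\n{u}_s)^{3/2-s}\gm_n^2$ for all $n\ge 8\n{u}_s^{3/2-s}$---is the actual analytic work here, and the paper does it by estimating the infinite product
\[
  \chi_n(\lm)=\frac{1}{\sqrt[+]{\lm-\lm_0^+}}\prod_{m\neq n}\frac{\lm_m^{\ld}-\lm}{\vs_m(\lm)}
\]
that appears when one writes $I_n$ as an integral over $G_n$. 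Once that bound is in hand, the weighted gap sum $\sum_{n\ge N}w_{2n}^2\gm_n^2$ is a standard spectral estimate, which the paper simply quotes. Relatedly, your attribution of the restriction $0\le r\le 1/2$ and the exponent $3/2-r$ to ``a borderline convolution-type term in the second-order correction to the $n$-th periodic eigenvalue'' is off: they arise in the $\chi_n$ estimate, where one needs $N$ large enough that $\sum_{m\ge N,\,m\neq n}\gm_m/\abs{n^2-m^2}=O(1)$; after Cauchy--Schwarz this forces $N^{3+2r}\gtrsim\n{u}_r^4$, and the inequality $4/(3+2r)\le 3/2-r$ holds precisely for $0\le r\le 1/2$.
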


The bounds given in (i) and (ii) are valid for all submultiplicative weights including those growing exponentially fast. They reflect the nature of the weight: If, for example, $w$ grows polynomially, then the bounds (i) and (ii) are polynomial in $\n{u}_{w}$, whereas if $w$ grows exponentially, then the bounds are exponential in $\n{u}_{w}$. Note that the bounds improve as soon as the weight $w$ incorporates the factor $\lin{n}^{r}$ with $r > 0$. The following version of Theorem~\ref{act-west} for Sobolev spaces of real exponent complements the results of Theorems~\ref{b-est}-\ref{act-sob-est}.

\begin{corollary}
For any real $s\ge 0$ there exist an absolute constant $c_{s}$ such that the restriction of the Birkhoff map $\Om$ to $\Hs^{s}_{0}$ takes values in $h_{\star}^{s}$ and satisfies
\[
  \emph{ (i)}\quad\n{\Om(u)}_{h_{\star}^{s}} \le c_{s}(1+\n{u}_{s})^{\max(1,s)}\n{u}_{s}.
\]
Moreover, we find for the action variables on $\Hs_{0}^{s}$
\[
  \emph{(ii)}\quad\n{I(u)}_{\ell^{1}_{2s+1}} \le c_{s}^{2}(1+\n{u}_{s})^{2\max(1,s)}\n{u}_{s}^{2}.\fish
\]
\end{corollary}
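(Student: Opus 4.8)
The plan is to deduce the corollary directly from Theorem~\ref{act-west} by specializing the abstract weight $w$ to a Sobolev weight and choosing the free parameter $r$ optimally. As a preliminary reduction I would replace the Sobolev weight $\lin{n\pi}^{s}$ by the comparable weight $w_{n}\defl\lin{n}^{s}$. Since $1\le(1+\pi\abs{n})/(1+\abs{n})\le\pi$ for all $n$ and $1/\pi\le(1+2n)/(2n\pi)\le3/(2\pi)$ for $n\ge1$, the norms $\n{\cdot}_{w}$ and $\n{\cdot}_{s}$, the $h_{\star}^{w}$- and $h_{\star}^{s}$-norms, and the functionals $\sum_{n\ge1}w_{2n}^{2}(2n\pi)I_{n}$ and $\n{I(u)}_{\ell^{1}_{2s+1}}$ are pairwise equivalent with constants depending only on $s$. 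It therefore suffices to prove the two inequalities with $w_{n}=\lin{n}^{s}$ in place of the Sobolev weight.

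Next I would bring $w$ into the form required by Theorem~\ref{act-west}. Put $r\defl\min(s,1/2)\in[0,1/2]$ and $v_{n}\defl\lin{n}^{s-r}$. Because $s-r\ge0$ and $\lin{n+m}\le\lin{n}\lin{m}$, the weight $v$ is normalized, symmetric, submultiplicative and monotone, i.e.\ $v\in\Ms$, and $w=\lin{n}^{r}v$. Hence Theorem~\ref{act-west} applies with this $w$ and $r$, and, writing $t\defl\n{u}_{w}$, it yields
\[
  \n{\Om(u)}_{h_{\star}^{w}}\le c_{w,r}\bigl((1+t)+w(16t^{3/2-r})\bigr)t
\]
together with the companion bound $\sum_{n\ge1}w_{2n}^{2}(2n\pi)I_{n}\le c_{w,r}^{2}\bigl((1+t)^{2}+w(16t^{3/2-r})^{2}\bigr)t^{2}$.

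It remains to collapse the factor $w(16t^{3/2-r})$. As $w$ is the piecewise-linear, monotone extension of $n\mapsto(1+n)^{s}$, one has $w(\tau)\le2^{s}(1+\tau)^{s}$ for $\tau\ge0$, so $w(16t^{3/2-r})\le C_{s}(1+t)^{s(3/2-r)}$. For $s\le1/2$ we have $r=s$ and $s(3/2-r)=s(3/2-s)\le1/2$, while for $s>1/2$ we have $r=1/2$ and $s(3/2-r)=s$; in either case $s(3/2-r)\le\max(1,s)$. Since $1+t\ge1$ this gives $(1+t)+w(16t^{3/2-r})\le C_{s}(1+t)^{\max(1,s)}$. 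Substituting this into the two displayed bounds and translating back through the norm equivalences of the first step yields (i) and (ii); in particular $\Om$ maps $\Hs_{0}^{s}$ into $h_{\star}^{s}$.

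There is no deep obstacle in this argument, which is essentially bookkeeping. The two points that call for a little care are verifying that the chosen $v=\lin{n}^{s-r}$ genuinely belongs to $\Ms$ so that Theorem~\ref{act-west} is applicable, and controlling $w$ at the non-integer argument $16t^{3/2-r}$, which relies on the elementary comparison of the piecewise-linear extension of $\lin{n}^{s}$ with $(1+\cdot)^{s}$ and on the inequality $s\bigl(3/2-\min(s,1/2)\bigr)\le\max(1,s)$ that forces the final exponent down to exactly $\max(1,s)$.
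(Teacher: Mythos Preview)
Your argument is correct and is precisely the intended derivation: the paper states the corollary immediately after Theorem~\ref{act-west} without a separate proof, and your specialization of that theorem to the Sobolev weight $w=\lin{n}^{s}$ with the optimal split $r=\min(s,1/2)$, $v=\lin{n}^{s-r}\in\Ms$, together with the elementary bound $w(16t^{3/2-r})\le C_{s}(1+t)^{s(3/2-r)}$ and the observation $\max\bigl(1,s(3/2-r)\bigr)=\max(1,s)$, is exactly how the corollary follows.
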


\emph{Method of proof.}
The main ingredient into the proof of Theorem~\ref{b-est} is the estimate of the action variables $I_{n}$, $n\ge 1$, of Theorem~\ref{act-sob-est}. To establish the latter, we use the \kdv action variables $J_{n,m}$, $n\ge 1$, on level $m\ge 0$ introduced by McKean \& Vaninsky~\cite{McKean:1997ka} in context of the nonlinear Schrödinger (NLS) equation. They are defined in terms of spectral data of the corresponding Hill operator
\begin{align}
  \label{sop}
  L(u) = -\frac{\ddd^2}{\ddd x^2} + u,
\end{align}
arising in the Lax-pair formulation of \kdv\ -- see Section~\ref{s:setup}.
For $m=0$, the actions $J_{n,0}$ coincide with $I_{n}$, whereas for $m\ge 1$ they have the asymptotic behavior
\[
  J_{n,m} \sim (2n\pi)^{2m} I_{n},\qquad n\to \infty.
\]
Furthermore, they satisfy the trace formula
\[
  \sum_{n\ge 1} (2n\pi) J_{n,m} = \frac{1}{4^m}H_m + O(H_{m-1},\dotsc,H_{0}),
  \qquad m\ge 0,
\]
where $\H_{k}$ denotes the $k$th Hamiltonian in the \emph{\kdv hierarchy}.
The first two Hamiltonians of this hierarchy are given by
\[
  H_{0}(u) = \frac{1}{2}\int_{\T} u^{2}\,\dx,
  \qquad
  H_{1}(u) = H(u) = \frac{1}{2}\int_{\T} ((\partial_{x}u)^{2} + 2 u^{3})\,\dx.
\]
More generally, these Hamiltonians have the form
\[
  H_m(u) = \frac{1}{2}\int_\T \p*{ \partial_{x}^{m}u)^2 + p_m(u,\ldots,\partial_{x}^{m-1}u) }\,\dx
\]
with $p_m$ being a canonically-determined polynomial.

Taking the asymptotic behavior of the actions $J_{n,m}$, the corresponding trace formulae, and the representation of the Hamiltonians together we get
\[
  \sum_{n\ge 1} (2n\pi)^{2m+1} I_{n} \sim
  \sum_{n\ge 1} (2n\pi) J_{n,m} \sim
  \frac{1}{2}\n{u}_{m}^{2} + O(\n{u}_{m-1}),
\]
which yields the bound stated in Theorem~\ref{act-sob-est} (i).
To obtain item (ii) of Theorem~\ref{act-sob-est} we rewrite the $m$th Hamiltonian as
\[
  \frac{1}{2}\n{\partial_{x}^{m}u}_{0}^{2}
   = H_{m} - \frac{1}{2}\int_{\T} p_m(u,\ldots,\partial_{x}^{m-1}u)\,\dx.
\]
Using the trace formula we derive a bound of $H_{m}$ in terms of $\n{I(u)}_{\ell_{2m+1}^{1}}$. Since the $p_{m}$-term depends only on derivatives of $u$ up to order $m-1$, an inductive argument then gives part (ii) of Theorem~\ref{act-sob-est}.
We remark that the asymptotics of $J_{n,m}$ are derived from sufficiently accurate asymptotics of the periodic eigenvalues of the corresponding Hill operator, and that our method of proof does not involve any auxiliary spectral quantities such as the spectral heights.

To prove Theorem~\ref{act-west}, we take a slightly different approach by estimating the action variables in terms of the spacing of the periodic eigenvalues of the associated Hill operator. For the latter, estimates in any weighted norm are available -- see e.g. \cite{Kappeler:1999er,Kappeler:2001hsa,Djakov:2006ba,Poschel:2011iua} -- allowing us to obtain Theorem~\ref{act-west}.

\emph{Related results.}
First results on global bounds of the \kdv actions in terms of Sobolev norms were obtained by Korotyaev~\cite{Korotyaev:2000tc,Korotyaev:2006uh} using conformal mapping theory.
More precisely, he shows \cite[Theorem 2.4 \& 2.6]{Korotyaev:2006uh} for any $m\ge 0$ that
$\n{I(u)}_{\ell^{1}_{2m+1}}$ is bounded by
\begin{align*}
  c_{m}^{2}
  \n{\partial_{x}^{m}u}_{0}^{2}
  (1+\n{\partial_{x}^{m}u}_{0})^{\frac{4m+2}{3}}
  \paren[\Big]{1+\n{\partial_{x}^{m}u}_{0}^{\frac{2m}{m+2}}(1+\n{\partial_{x}^{m}u}_{0})^{\frac{m(4m+2)}{3(m+2)}}}.
\end{align*}
By augmenting $c_{m}$ but without increasing the degree of the right hand side, this bound may be simplified to
\[
  \n{I(u)}_{\ell^{1}_{2m+1}}
   \le
  c_{m}^{2}
  (1+\n{u}_{m})^{\frac{8m+2}{3}}\n{u}_{m}^{2}.
\]
In comparison, our estimate Theorem~\ref{act-sob-est} (i) is linear in $\n{u}_{m}^{2}$ with the remainder  $(1+\n{u}_{m-1})^{2m}\n{u}_{m-1}^{2}$ involving only Sobolev norms of order $m-1$.

For the inverse problem Korotyaev obtains \cite[Theorem 2.4 \& 2.6]{Korotyaev:2006uh}
\[
  \n{\partial_{x}^{m}u}_{0}^{2}
   \le
  c_{m}
  (1+\n{I(u)}_{\ell^{1}_{2m+1}})^{\frac{4m+2}{3}+\frac{8m+10}{3}N_{m}}\n{I(u)}_{\ell^{1}_{2m+1}},
\]
where
\[
  N_{m} = \frac{m+1}{3}\left(1 + tm + t^{2}m(m-1) + \dotsb + t^{m+1}m!\right)\Big|_{t=2/3}.
\]
Note that $N_{m}$ grows factorially with $m$. Our estimate Theorem~\ref{act-sob-est} (ii) considerably improves the latter one, since the bound is linear in $\n{I(u)}_{\ell_{2m+1}^1}$, the exponent of the remainder $(1+\n{I(u)}_{\ell_{2m-1}^1})^{m}\n{I(u)}_{\ell_{2m-1}^{1}}$ is linear in $m$, and the remainder only involves $\ell^{1}$-norms of order $2m-1$.

A priori estimates of \kdv solutions obtained directly from the Hamiltonians in the \kdv hierarchy are well known -- see e.g. \cite{Bona:1975hn}. The application of Theorem~\ref{b-est} to the initial value problem of \kdv stated in Corollary~\ref{s-est} improves on these results since the bound of $\n{u(t)}_{m}$ is linear in $\n{u_{0}}_{m}$ and the exponent of the remainder $(1+\n{u_{0}}_{m-1})^{m^{2}+m-1}\n{u_{0}}_{m-1}$ grows only quadratically in $m$.

First results on bounds of the Birkhoff map for \kdv on the weighted Sobolev spaces $\Hs_{0}^{w}$ can be found in Kappeler \& P{\"o}schel~\cite{Kappeler:2009uk}. The authors proved
\[
  u\in \Hs_{0}^{w}\quad \iff \quad \Om(u) \in h_{\star}^{w},
\]
where the $\Leftarrow$ part holds true only for weights with subexponential growth. The estimates of $\n{\Om(u)}_{h_{\star}^{w}}$ presented in Theorem~\ref{act-west} quantify this relationship.

The methods developed in this paper were introduced by the author \cite{Molnar:2014vg} in the context of the nonlinear Fourier transform for the periodic defocusing \nls equation
\[
  \ii\partial_{t}u = -\partial_{x}^{2}u + 2\abs{u}^{2}u,\qquad x\in \T,\quad u\in \C,
\]
to show results corresponding to Theorems~\ref{b-est}-\ref{act-west}. We note that the result of Theorem~\ref{b-est} improves on the corresponding result obtained for the defocusing NLS equation since the exponents in the remainders of estimate (i) and estimate (ii) are equal.

From a broader perspective we may view the weighted actions $(2n\pi)I_{n}$ as a perturbation of the squared modulus $\abs{u_{n}}^{2}$ of the $n$th Fourier coefficient of $u$. Our method of comparing the norms $\n{I(u)}_{\ell^{1}_{2m+1}}$ with the Hamiltonians of the \kdv hierarchy consists in a separate analysis of Fourier modes of low and high frequencies. This idea has a long history in the analysis of nonlinear PDEs. Most recently, it lead Colliander, Keel, Staffilani, Takaoka \& Tao \cite{Colliander:2001wg,Colliander:2003fv,Colliander:2004gc} to invent the I-Method, which allows to obtain global well-posedness of subcritical equations in low-regularity regimes where the Hamiltonian (or other integrals) of the equation cease to be well defined. The idea is to damp all sufficiently high Fourier modes of a local solution such that the Hamiltonian can be controlled by weaker norms while still being an >>almost conserved<< quantity. The difficulty here is to choose the damping carefully enough such that the nonlinearity of the equation does not create a significant interaction of low and and high frequencies. Our aim is so to say opposite to that of the I-Method: As we look for quantitative global estimates, the most delicate part of our analysis is to get control of the modes of low frequencies. It is achieved by an appropriate localization of the periodic eigenvalues of the Hill operator -- see Proposition~\ref{ev-as} and the references mentioned in Appendix~A.

The results on the analyticity and asymptotic expansion of the integral $F(\lm)$ introduced in Section~\ref{s:setup} have also been applied in \cite{Kappeler:CNzeErmy} concerning convexity properties of the \kdv Hamiltonian. They were put together in a joint effort.

\emph{Organization of the paper.} In Section~\ref{s:setup} the \kdv action variables on integer levels $m\ge 0$ are defined and the trace formulae relating them to the hierarchy of \kdv Hamiltonians are proven. In Sections~\ref{s:act-est} and \ref{s:sob-est} Theorem~\ref{act-sob-est} (i) and (ii), respectively, are obtained by use of the localization of the Hill spectrum, which for the convenience of the reader is proved in Appendix~\ref{s:appendix}. Finally, in Section~\ref{s:act-west} we prove Theorem~\ref{act-west} by obtaining a uniform estimate of the actions in terms of the spacing of the periodic eigenvalues.

\emph{Acknowledgement.} The author is very grateful to Professor Thomas Kappeler for frequent discussions and valuable comments that greatly improved the manuscript.

\section{Setup}
\label{s:setup}

In this section we briefly recall the definition of the action variables as well as the main properties of the spectral quantities used to define them. We follow the exposition \cite{Kappeler:2003up} -- see also \cite{Poschel:1987uc,Poschel:2011iua,Djakov:2006ba,Flaschka:1976tc}. Consider \emph{Hill's operator}
\[
  L(q) = -\ddx + q,
\]
on the interval $[0,2]$, endowed with periodic boundary conditions and $q$ being a \emph{complex potential} in $\Hs^{0}_{0,\C} \defl \Hs^{0}_{0}(\T,\C)$. The spectrum of $L(q)$, called the \emph{periodic spectrum of $q$}, is pure point and complex in general as the operator is not self-adjoint. We may order the eigenvalues lexicographically -- first by their real part and second by their imaginary part -- such that
\[
  \lm_{0}^{+}(q) \lex \lm_{1}^{-}(q) \lex \lm_{1}^{+}(q) \lex \dotsb
          \lex \lm_{n}^{-}(q) \lex \lm_{n}^{+}(q) \lex \dotsb.
\]
Their asymptotic behavior is
\[
  \lm_{n}^{\pm}(q) = n^{2}\pi^{2} + \ell^{2}_{n},
\]
and we define for any $n\ge 1$ the \emph{gap length}
\[
  \gm_{n}(q) = \lm_{n}^{+}(q)-\lm_{n}^{-}(q) = \ell_{n}^{2}.
\]
Here $\ell_{n}^{2}$ stands for an $\ell^{2}$-sequence. For convenience we set $\gm_{0} = \infty$.

To obtain a suitable characterization of the periodic spectrum of $q$, we denote by $y_1(x,\lm,q)$ and $y_2(x,\lm,q)$ the standard fundamental solutions of $L(q)y = \lm y$, and by $\Delta(\lm,q)$ the \emph{discriminant}
\[
  \Delta(\lm,q) \defl y_1(1,\lm,q) + y_2'(1,\lm,q).
\]
To simplify matters, we may drop some or all of its arguments from the notation whenever there is no danger of confusion. The periodic spectrum of $q$ is precisely the zero set of the entire function $\Dl^2(\lm) - 4$, and we have the product representation
\[
  \Delta^2(\lm) - 4
   =
  -4(\lm-\lm_{0}^{+})\prod_{k\ge 1} \frac{(\lm_k^+-\lm)(\lm_k^--\lm)}{k^4\pi^4}.
\]
Hence, the discriminant is uniquely determined by the periodic spectrum. We also need the $\lm$-derivative $\dDl\defl\partial_{\lm}\Dl$ whose zeros are denoted by $\lm_{n}^{\ld}$ and satisfy $\lm_{n}^{\ld} = n^{2}\pi^{2} + \ell^{2}_{n}$. This derivative has the product representation
\[
  \dDl(\lm)  = -\prod_{m\ge 1} \frac{\lm_{m}^{\ld}-\lm}{m^{2}\pi^{2}}.
\]

For each real potential $q$ there exists an open neighborhood $\Wp_{q}$ within $\Hs^{0}_{0,\C}$ such that for every $p\in \Wp_{q}$ the closed intervals
\[
  G_{0} = \setdef{t+\lm_{0}^{+}}{-\infty < t \le 0},\qquad
  G_{n} = [\lm_{n}^{-},\lm_{n}^{+}],\quad n\ge 1,
\]
are disjoint from each other. Even more, there exist mutually disjoint neighborhoods $U_{n}\subset \C$, $n\ge 0$, called \emph{isolating neighborhoods}, which satisfy:

\begin{enumerate}[label=(\alph{*})]
\item $G_{n}$ and $\lm_{n}^{\ld}$ are contained in the interior of $U_{n}$ for every $p\in \Wp_{q}$,

\item there exists a constant $c \ge 1$ such that for $m\neq n$,
\[
  c^{-1}\abs{m^{2}-n^{2}} \le \dist(U_{n},U_{m}) \le c\abs{m^{2}-n^{2}},
\]

\item $U_{n} = \setd{\abs{\lm-n^{2}\pi^{2}} \le \pi/4}$ for $n$ sufficiently large.
\end{enumerate}

\noindent
Throughout this text $\Wp_{q}$ denotes a neighborhood of $q$ such that a common set of isolating neighborhoods for all $p\in \Wp_{q}$ exists. The union of all $\Wp_{q}$ for $q$ real defines an open and connected neighborhood of $\Hs^{0}_{0}$ within $\Hs^{0}_{0,\C}$ and is denoted by $\Wp$.

Following the approach of Flaschka \& McLaughlin~\cite{Flaschka:1976tc}, one can define action variables for the \kdv equation by Arnold's formula
\[
  I_n
   =
  \frac{1}{\pi}\int_{a_n}
  \frac{\lm \dDl(\lm)}{\sqrt{\Dl^2(\lm)-4}}
  \,\dlm,\qquad n\ge 1.
\]
Here $a_{n}$ denotes a cycle around $(\lm_{n}^{-},\lm_{n}^{+})$ on the spectral curve
\[
  C_{q} = \setdef{(\lm,z)}{z^2 = \Dl^2(\lm,q) - 4}\subset\C^{2},
\]
on which the square root $\sqrt{\Dl^2(\lm)-4}$ is defined. This curve is another spectral invariant associated with $q$, and an open Riemann surface of infinite genus if and only if the periodic spectrum of $q$ is simple. To avoid the technicalities involved with this curve, we fix proper branches of the square root which allows us to reduce the definition of the actions to standard contour integrals in the complex plane -- see also \cite{Kappeler:2003up,Kappeler:2005fb}.

Denote by $\sqrt[+]{\phantom{\lm}}$ the \emph{principal branch} of the square root on the complex plane minus the ray $(-\infty,0]$. Furthermore, for $q\in\Wp$ the \emph{standard root}
\[
  \vs_{n}(\lm) = \sqrt[\mathrm{s}]{(\lm_{n}^{+}-\lm)(\lm_{n}^{-}-\lm)},
                     \qquad \lm\notin G_{n},\qquad n\ge 1,
\]
is defined by the condition
\begin{align}
  \label{s-root}
  \vs_{n}(\lm) = (\tau_{n}-\lm)\sqrt[+]{1 - \gm_{n}^{2}/4(\tau_{n}-\lm)^{2}},
  						 \qquad \tau_{n} = (\lm_{n}^{-}+\lm_{n}^{+})/2.
\end{align}
The standard root is analytic in $\lm$ on $\C\setminus G_{n}$ and in $(\lm,p)$ on $(\C\setminus U_{n})\times \Wp_{q}$. Finally, we define the \emph{canonical root}
\[
  \sqrt[c]{\Dl^{2}(\lm)-4} \defl
   -2\ii\sqrt[+]{\lm-\lm_{0}^{+}}\prod_{k\ge 1} \frac{\vs_{k}(\lm)}{k^{2}\pi^{2}}.
\]
This root is analytic in $\lm$ on $\C\setminus\bigcup_{\atop{\gm_{k}\neq 0}{k\ge 0}} G_{k}$ and in $(\lm,p)$ on $(\C\setminus \bigcup_{k\ge 0} U_{k})\times \Wp_{q}$.

The \emph{$n$th \kdv action variable} of $q\in \Wp$ is then given by
\[
  \quad I_n
   \defl
  \frac{1}{\pi}\int_{\Gm_n}
  \frac{\lm \dDl(\lm)}{\sqrt[c]{\Dl^2(\lm)-4}}\,\dlm,
\]
where $\Gm_{n}$ denotes any sufficiently close circuit around $G_{n}$. More generally, we define the \emph{$n$th \kdv action variable on level $m\ge 0$} by
\[
  J_{n,m}
   \defl
  \frac{1}{(m+1)\pi}\int_{\Gm_n}
  \frac{\lm^{m+1}\dDl(\lm)}{\sqrt[c]{\Dl^{2}(\lm)-4}}\,\dlm.
\]
Note that the action on level zero $J_{n,0}$ equals the action $I_{n}$.

We proceed with the analysis of the analytical properties of the action integrand. To this end, we define for any $q\in \Wp$ on $(\C\setminus \bigcup_{k\ge0} U_{k}) \times \Wp_{q}$ the complex 1-form $\om(\lm) \equiv \om(\lm,p)$ by
\begin{align}
  \label{om}
  \om(\lm)
  \defl \frac{\dDl(\lm)}{\sqrt[c]{\Dl^{2}(\lm)-4}}\,\dlm
      = \frac{1}{2\ii}\frac{1}{\sqrt[+]{\lm-\lm_{0}^{+}}}
        \prod_{k\ge 1} \frac{\lm_{k}^{\ld}-\lm}{\vs_{k}(\lm)}\,\dlm.
\end{align}

A path in the complex plane is said to be \emph{admissible} for $q$ if, except possibly at its endpoints, it does not intersect any non collapsed gap $G_{n}(q)$.

\begin{lem}
\label{w-closed}
For each $q\in \Wp$, the 1-form $\om$ has the following properties:
\begin{equivenum}
\item
$\om$ is analytic on $(\C\setminus \bigcup_{k\ge 0} U_{k}) \times \Wp_{q}$,
\item
$\om(\lm,q)$ is analytic in $\lm$ on $\C\setminus \bigcup_{\atop{\gm_{k}\neq 0}{k\ge 0}} G_{k}$\footnote{With the convention $\gm_{0} = \infty$.}, and
\item
for $n\ge 1$ and any admissible path from $\lm_{n}^{-}$ to $\lm_{n}^{+}$ in $U_{n}$,
\[
  \int_{\lm_{n}^{-}}^{\lm_{n}^{+}} \om =  0.
\]
In particular, for any closed circuit $\Gm_{n}$ in $U_{n}$ around $G_{n}$,
\[
  \int_{\Gm_{n}} \om = 0.\fish
\]
\end{equivenum}
\end{lem}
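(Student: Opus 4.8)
The plan is to establish the three properties in order, extracting (i) and (ii) essentially from the product representations already recorded, and then deriving (iii) from the fact that $\om$ is the differential of a single-valued function near the gap $G_n$.

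For item (i), recall the factored form
\[
  \om(\lm) = \frac{1}{2\ii}\frac{1}{\sqrt[+]{\lm-\lm_{0}^{+}}}
             \prod_{k\ge 1} \frac{\lm_{k}^{\ld}-\lm}{\vs_{k}(\lm)}\,\dlm.
\]
The factor $\sqrt[+]{\lm-\lm_{0}^{+}}$ is analytic and nonvanishing on $\C\setminus U_{0}$ and depends analytically on $p\in\Wp_{q}$ since $\lm_{0}^{+}$ stays in the interior of $U_{0}$. Each standard root $\vs_{k}$ is, by the property cited right after \eqref{s-root}, analytic and nonzero on $(\C\setminus U_{k})\times\Wp_{q}$, while the numerator $\lm_{k}^{\ld}-\lm$ is entire with $\lm_{k}^{\ld}$ confined to the interior of $U_{k}$. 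So every factor is analytic on $(\C\setminus\bigcup_{k\ge0}U_{k})\times\Wp_{q}$; the only thing to check is that the infinite product converges locally uniformly there. For this I would use the asymptotics $\lm_{k}^{\ld}=k^{2}\pi^{2}+\ell^{2}_{k}$ and $\lm_{k}^{\pm}=k^{2}\pi^{2}+\ell^{2}_{k}$ together with definition \eqref{s-root}, writing $\frac{\lm_{k}^{\ld}-\lm}{\vs_{k}(\lm)} = 1 + \frac{(\lm_{k}^{\ld}-\tau_k)+O(\gm_k^2/(\tau_k-\lm))}{\tau_k-\lm}$ and noting that on a fixed compact set disjoint from all $U_{k}$ one has $|\tau_k-\lm|\gtrsim k^2$ for large $k$, so the deviations from $1$ are summable in $k$ (they are $\ell^{1}$ because $\lm_{k}^{\ld}-\tau_k\in\ell^{2}_k$ and $\gm_k\in\ell^2_k$ while $1/(\tau_k-\lm)$ decays like $k^{-2}$). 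Local uniformity in $p$ follows because all these asymptotics are uniform on $\Wp_{q}$ (shrinking $\Wp_q$ if necessary). Hence the product defines an analytic function and (i) holds.

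Item (ii) is the analogous statement for fixed real (or complex) $q$ but with the isolating neighborhoods $U_{k}$ replaced by the actual gaps $G_{k}$, and with the collapsed gaps removed from the exclusion set. When $\gm_{k}=0$ the $k$th factor $\frac{\lm_{k}^{\ld}-\lm}{\vs_{k}(\lm)}$ has a removable singularity: in that case $\lm_{k}^{-}=\lm_{k}^{+}=\lm_{k}^{\ld}=\tau_k$ (the double periodic eigenvalue is also a zero of $\dDl$), and $\vs_k(\lm)=\tau_k-\lm$ by \eqref{s-root}, so the factor is identically $1$. For $\gm_{k}\neq0$ the factor is analytic off $G_k$ by the stated analyticity of $\vs_k$ together with the first representation in \eqref{om}, $\om=\dDl/\sqrt[c]{\Dl^2-4}\,\dlm$, where $\dDl$ is entire and $\sqrt[c]{\Dl^2-4}$ is analytic on $\C\setminus\bigcup_{\gm_k\neq0}G_k$. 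Convergence of the product away from the gaps is the same estimate as before. So $\om(\lm,q)$ is analytic on $\C\setminus\bigcup_{\gm_k\neq0,\,k\ge0}G_k$.

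Item (iii) is the substantive point. Fix $n\ge1$; I work inside the single isolating neighborhood $U_{n}$, which by (b) is disjoint from every other $U_{m}$ and hence (for $p\in\Wp_q$) contains no gap other than $G_n$. On $U_{n}\setminus G_{n}$ the 1-form $\om$ is analytic by (i), and I claim it is exact there, i.e. $\om=dg$ for a holomorphic $g$ on $U_n\setminus G_n$. The obstruction to exactness is the period of $\om$ around $G_n$, namely $\int_{\Gm_n}\om$, so exactness and the vanishing of the period are the same assertion — I must therefore exhibit the primitive, or equivalently compute the period, without circularity. The clean way is the standard Flaschka–McLaughlin argument: on the two-sheeted spectral curve $C_q$ near $G_n$ the form $\om$ lifts to $\dDl/z\,\dlm$ with $z^2=\Dl^2-4$, and there $\dDl/z\,\dlm = d\!\left(\log\!\bigl(\tfrac{1}{2}(\Dl+z)\bigr)\right)$ up to the factor, because $\frac{d}{d\lm}\log(\Delta+z)=\frac{\dDl + z'}{\Delta+z}$ and on $C_q$ one has $2zz'=2\Dl\dDl$, i.e. $z'=\Dl\dDl/z$, so $\dDl+z' = \dDl(z+\Delta)/z$ and the quotient collapses to $\dDl/z$. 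Thus $\om$ is the differential of (a branch of) $\frac{1}{2\ii}\log\ldots$ — wait, more carefully: using the normalized canonical root, $\om = d\big(\text{arccosh}(\Delta/2)\big)$ up to sign on one sheet. The function $\Delta/2$ maps $G_n$ onto $[-1,1]$ (or a subinterval), $\text{arccosh}$ is single-valued on $U_n\setminus G_n$ once we slit along $G_n$ because $U_n$ contains no other branch points of $\Dl^2-4$, and going once around $G_n$ returns $\text{arccosh}(\Delta/2)$ to its original value — it does not pick up $2\pi\ii$ the way $\log$ would around an isolated zero, precisely because $\Delta^2-4$ has its two simple zeros $\lm_n^\pm$ inside $\Gm_n$ and $\arccosh$ of a function with two simple zeros of $z^2$ enclosed is single-valued. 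Hence $\int_{\Gm_n}\om=0$, and by Cauchy's theorem the same holds for any admissible arc from $\lm_n^-$ to $\lm_n^+$ (two such arcs differ by a closed circuit around $G_n$, possibly plus a piece homotopic to $G_n$ on which $\om$ restricts to zero since $\Delta/2\in[-1,1]$ makes $\arccosh$ real and the endpoints agree). I expect the primitive bookkeeping — choosing the branch of the canonical root consistently with the principal branches in \eqref{om}, and checking that the monodromy of $\arccosh(\Delta/2)$ around two enclosed simple zeros is trivial rather than equal to $2\pi\ii$ — to be the one genuinely delicate step; everything else is convergence of the product and an application of Cauchy's theorem. An alternative, avoiding branches altogether, is to deform $\Gm_n$ toward $\infty$ through the region where $\om$ is analytic (using (ii): only the single gap $G_n$ obstructs, but moving the contour past it is not allowed) — so in fact the branch argument seems unavoidable and is the heart of the proof.
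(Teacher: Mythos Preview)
Your treatment of (i) and (ii) is correct and matches the paper's reasoning; the paper is terser because it invokes the already-established analyticity of $\dDl$ and of the canonical root rather than rechecking the product convergence, but the content is the same.

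For (iii), however, you are missing the key simplifying device. The paper does \emph{not} attempt to verify directly that the monodromy of $\arccosh(\Dl/2)$ (equivalently, of $\log(\Dl+\sqrt[c]{\Dl^{2}-4})$) around $G_{n}$ is trivial for a general $q\in\Wp$. Instead it proceeds in two steps. First, for \emph{real} $q$ one shrinks $\Gm_{n}$ to the real interval $G_{n}$, on which $(-1)^{n}\Dl\ge 2$; there the canonical root is real and the primitive is the ordinary real $\arccosh$, so
\[
  \int_{\partial U_{n}}\om
   = -2\arccosh\frac{(-1)^{n}\Dl(\lm)}{2}\Big|_{\lm_{n}^{-}}^{\lm_{n}^{+}} = 0
\]
by a one-line computation with no branch ambiguity. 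Second, since $p\mapsto\int_{\partial U_{n}}\om$ is analytic on $\Wp_{q}$ by (i), and vanishes on the real slice $\Wp_{q}\cap\Hs^{0}_{0}$, it vanishes on all of $\Wp_{q}$ by analytic continuation (Lemma~\ref{ana-vanish}). Only afterwards does the paper deduce the path-integral statement from the closed-circuit one, using that the canonical root flips sign across $G_{n}$.

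Your direct route is not wrong in principle, but the step you flag as ``genuinely delicate'' is exactly where it stalls: the single-valuedness of $\sqrt[c]{\Dl^{2}-4}$ around $G_{n}$ (two branch points enclosed) does \emph{not} by itself imply that $\log(\Dl+\sqrt[c]{\Dl^{2}-4})$ is single-valued; for that you must also show that $\Dl+\sqrt[c]{\Dl^{2}-4}$ has winding number zero about the origin along $\Gm_{n}$, which you have not done for complex $q$. The real-first-then-extend strategy sidesteps this entirely, and is the idea you are missing.
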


\begin{proof}
(i)
$\dDl$ is analytic on $\C\times \Hs^{0}_{0,\C}$ and the canonical root is analytic on $(\C\setminus \bigcup_{k\ge 0} U_{k}) \times \Wp_{q}$ and does not vanish there. Therefore, $\om$ is analytic on $(\C\setminus \bigcup_{k\ge 0} U_{k}) \times \Wp_{q}$.

(ii)
Similarly, for $q$ fixed, $\om(\lm,q)$ is analytic in $\lm$ on $\C\setminus\bigcup_{k\ge 0} G_{k}$. Suppose $\gm_{n} = 0$ for some $n\ge 1$, then $\lm_{n}^{+} = \lm_{n}^{-}$ is a double root of $\Dl$, hence $\lm_{n}^{\ld} = \lm_{n}^{\pm}$ and the term $(\lm_{n}^{\ld}-\lm)/\vs_{n}(\lm)$ in the product representation \eqref{om} of $\om$ equals $1$, hence $\om(\lm,q)$ is analytic on $U_{n}$.

(iii)
We first consider the case of $q$ being real-valued. Clearly, the functional $\Wp_{q}\to \C$, $p\mapsto \int_{\partial U_{n}}\om$ is analytic. Further, for any $p\in \Wp_{q}$ real-valued, one has $(-1)^{n}\Dl(\lm) \ge 2$ on $G_{n}$ so, after deforming the contour of integration to $G_{n}$, according to the definition of the canonical root (see \cite{Kappeler:2003up} for a sign table)
\[
  \int_{\partial U_{n}}\om
   = 2\int_{\lm_{n}^{-}}^{\lm_{n}^{+}} \frac{(-1)^{n+1}\dDl(\lm)}{\sqrt[+]{\Dl^{2}(\lm)-4}}\,\dlm
   = -2\arccosh\frac{(-1)^{n}\Dl(\lm)}{2}\bigg|_{\lm_{n}^{-}}^{\lm_{n}^{+}} = 0.
\]
Thus $\int_{\partial U_{n}}\om$ vanishes on $\Wp_{q}\cap \Hs^{0}_{0}$ and hence on all of $\Wp_{q}$ by Lemma~\ref{ana-vanish}. In view of $\Wp = \bigcup_{q\in \Hs^{0}_{0}} \Wp_{q}$ we conclude that $\int_{\Gm_{n}}\om = 0$ for any closed circuit $\Gm_{n}$ in $U_{n}$ around $G_{n}$ and any $q\in \Wp$.

Now fix $q\in \Wp$ arbitrary. The identity $\int_{\lm_{n}^{-}}^{\lm_{n}^{+}} \om = 0$ clearly holds in the case $\lm_{n}^{+} = \lm_{n}^{-}$. If $\lm_{n}^{+} \neq \lm_{n}^{-}$, then we may define the left hand side $G_{n}^{+}$ and the right hand side $G_{n}^{-}$ of $G_{n}$ by
\[
  G_{n}^{\pm} \defl \setdef{\tau_{n} + (t\pm \ii 0)\gm_{n}/2\in \C}{-1\le t\le 1}.
\]
The canonical root admits opposite signs on $G_{n}^{\pm}$ that is
\[
  \sqrt[c]{\Dl^{2}(\lm)-4}\bigg|_{G_{n}^{-}} = -\sqrt[c]{\Dl^{2}(\lm)-4}\bigg|_{G_{n}^{+}}.
\]
Defining the contour $\Gm_{n}$ by going from $\lm_{n}^{-}$ to $\lm_{n}^{+}$ along $G_{n}^{-}$ and then going back from $\lm_{n}^{+}$ to $\lm_{n}^{-}$ along $G_{n}^{+}$ gives
\[
  0 = \int_{\Gm_{n}} \om
    = 2\int_{\lm_{n}^{-}}^{\lm_{n}^{+}}
       \frac{\dDl(\lm)}{\sqrt[c]{\Dl^{2}(\lm)-4}\big|_{G_{n}^{-}}}\,\dlm
    = -2\int_{\lm_{n}^{-}}^{\lm_{n}^{+}}
       \frac{\dDl(\lm)}{\sqrt[c]{\Dl^{2}(\lm)-4}\big|_{G_{n}^{+}}}\,\dlm.
\]
By contour deformation it then follows that $\int_{\lm_{n}^{-}}^{\lm_{n}^{+}} \om = 0$ along any admissible path in $U_{n}$.\qed
\end{proof}

Writing the action variables as
\begin{align}
  \label{Jn-om}
  J_{n,m} = \frac{1}{(m+1)\pi}\int_{\Gm_{n}} \lm^{m+1}\om,
\end{align}
makes the claimed analyticity on $\Wp$ evident. To proceed, we define for any $q\in \Wp$ and any $n\ge 0$ on $(\C\setminus \bigcup_{k\ge 0}U_{k})\times \Wp_{q}$ the primitive $F_{n}(\lm) \equiv F_{n}(\lm,p)$ by
\[
  F_{n}(\lm) \defl \int_{\lm_{n}^{+}}^{\lm} \om.
\]
Clearly, this improper integral exists, as $\om$ has the integrable singularity $1/\sqrt{\lm-\lm_{n}^{+}}$ near $\lm_{n}^{+}$. By Lemma~\ref{w-closed} the integral is also independent of the chosen admissible path.

\begin{lem}
\label{F-prop}
For every $q\in \Wp$ and every $n\ge 0$, we have that

\begin{equivenum}
\item $F_{n}$ is analytic in $(\lm,p)$ on $(\C\setminus\bigcup_{k\ge 0} U_{k}) \times \Wp_{q}$ and $F_{n}(\lm)\equiv F_{n}(\lm,q)$ is analytic in $\lm$ on $\C\setminus \bigcup_{\atop{\gm_{k}\neq 0}{k\ge 0}} G_{k}$.

\item $F_{0}(\lm) = F_{n}(\lm) - \ii n\pi$ on $\C\setminus \bigcup_{\atop{\gm_{k}\neq 0}{k\ge 0}} G_{k}$. In particular, $F_{0}$ extends continuously to all points $\lm_{0}^{+}$ and $\lm_{n}^{\pm}$, $n\ge 1$. One has $F_{0}(\lm_{0}^{+}) = 0$ and
\[
  F_{0}(\lm_{n}^{+}) = F_{0}(\lm_{n}^{-}) = - \ii n\pi,\qquad n\ge 1.
\]

\item $F_{n}^{2}(\lm)$ is analytic on $\C\setminus \bigcup_{\atop{0\le k \neq n}{\gm_{k}\neq 0}} G_{k}$ for every $n\ge 0$.

\item If $q$ is real, then for any $n\ge1$ and any real $\lm_{n}^{-} \le \lm \le \lm_{n}^{+}$,
\[
  F_{n}(\lm \pm \ii 0) = \pm f_{n}(\lm),
  \qquad f_{n}(\lm) = \arccosh \frac{(-1)^{n}\Dl(\lm)}{2}.
\]
Clearly, $f_{n}$ is continuous on $G_{n}$, strictly positive on $(\lm_{n}^{-},\lm_{n}^{+})$, and vanishes at the boundary points.
\item At the zero potential one has $F_{n}(\lm,0) = -\ii \sqrt[+]{\lm}+\ii n\pi$.\fish

\end{equivenum}

\end{lem}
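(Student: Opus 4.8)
The plan is to establish the five items essentially in order, each building on its predecessors, with the bulk of the real work being the identities in (ii) and (iv) at real potentials and then propagating everything to all of $\Wp$ by analyticity.

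\emph{Item (i).} This is immediate from Lemma~\ref{w-closed}: on the domain $(\C\setminus\bigcup_{k\ge0}U_k)\times\Wp_q$ the form $\om$ is analytic, so its primitive $F_n(\lm)=\int_{\lm_n^+}^\lm\om$ is analytic there too; the path-independence needed to make this well defined is exactly Lemma~\ref{w-closed}(iii). For fixed $q$, Lemma~\ref{w-closed}(ii) gives analyticity of $\om(\lm,q)$ in $\lm$ on $\C\setminus\bigcup_{\gm_k\ne0}G_k$, and the only subtlety — that the improper integral converges at the lower endpoint $\lm_n^+$ — is handled by the integrable singularity $1/\sqrt[+]{\lm-\lm_n^+}$ noted just before the lemma.

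\emph{Item (ii).} I would compute $F_0(\lm)-F_n(\lm)=\int_{\lm_0^+}^{\lm_n^+}\om$ along an admissible path. By Lemma~\ref{w-closed}(iii) each subintegral across a collapsed-or-noncollapsed gap contributes nothing, while crossing each isolating neighborhood $U_k$ for $1\le k\le n$ along a path from $\lm_{k-1}^+$ to $\lm_k^+$ picks up a residue-type contribution of $-\ii\pi$; summing gives $F_0-F_n=-\ii n\pi$. Concretely, the cleanest route is: at the zero potential this is item (v), which gives $F_n(\lm,0)=-\ii\sqrt[+]{\lm}+\ii n\pi$ and hence $F_0-F_n=-\ii n\pi$ there; since both sides are analytic on the connected set $\Wp$ (the left side by (i), the right side constant) and $\Wp$ is connected, Lemma~\ref{ana-vanish} propagates the identity to all of $\Wp$. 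The continuous extension of $F_0$ to the points $\lm_n^\pm$ then follows because $F_n$ extends continuously there (its singularity is only at $\lm_n^+$, and that is an integrable square-root singularity so $F_n(\lm_n^+)=0$), and the stated values $F_0(\lm_0^+)=0$, $F_0(\lm_n^\pm)=-\ii n\pi$ drop out.

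\emph{Items (iii), (iv), (v).} For (iii): $F_n^2$ has no branch ambiguity at $\lm_n^\pm$ since $F_n$ vanishes there like $\sqrt{\lm-\lm_n^+}$, so $F_n^2$ is analytic across $G_n$; away from $G_n$ analyticity is inherited from (i)–(ii), giving analyticity on $\C\setminus\bigcup_{0\le k\ne n,\,\gm_k\ne0}G_k$. For (iv), at real $q$ deform the path defining $F_n(\lm\pm\ii0)$ onto $G_n$ and use the sign table for the canonical root (cf. \cite{Kappeler:2003up}) together with $(-1)^n\Dl\ge2$ on $G_n$; the integral $\int_{\lm_n^+}^\lm (-1)^{n+1}\dDl/\sqrt[+]{\Dl^2-4}$ is exactly $-\arccosh\bigl((-1)^n\Dl/2\bigr)$ evaluated between the limits, which yields $F_n(\lm\pm\ii0)=\pm f_n(\lm)$; the claimed positivity and boundary behaviour of $f_n$ follow from $\arccosh\ge0$ with equality iff the argument is $1$, i.e.\ at $\lm_n^\pm$ where $\Dl^2=4$. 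Item (v) is a direct computation: for $q=0$ one has $\Dl(\lm)=2\cos\sqrt[+]{\lm}$, $\lm_0^+=0$, $\gm_k=0$ for all $k$, so $\om=\dDl/\sqrt[c]{\Dl^2-4}\,\dlm$ simplifies and integrating from $\lm_n^+=n^2\pi^2$ gives $F_n(\lm,0)=-\ii\sqrt[+]{\lm}+\ii n\pi$ after fixing the branch using the principal-branch normalization of the canonical root.

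The main obstacle I anticipate is bookkeeping the signs and branch choices in (ii) and (iv): one must track how $\sqrt[c]{\Dl^2-4}$ changes sheets as the path threads between consecutive isolating neighborhoods, and verify that each crossing contributes precisely $-\ii\pi$ rather than $0$ or $\pm2\ii\pi$. Using the zero potential (item (v)) as an anchor and then invoking analytic continuation via Lemma~\ref{ana-vanish} sidesteps most of this for (ii); for (iv) there is no such shortcut and one genuinely has to invoke the sign table for the canonical root on the real gaps, which is the delicate point of the argument.
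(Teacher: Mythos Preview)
Your proposal has two genuine gaps.

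\textbf{Item (i).} You claim joint analyticity of $F_n$ in $(\lm,p)$ is ``immediate from Lemma~\ref{w-closed}''. But Lemma~\ref{w-closed}(i) only gives analyticity of $\om$ on $(\C\setminus\bigcup_k U_k)\times\Wp_q$, whereas the integral $\int_{\lm_n^+}^{\lm}\om$ starts at $\lm_n^+(p)\in U_n$ and must traverse $U_n$, where the $p$-dependence of $\om$ is not covered by that lemma. Worse, for $n\ge 1$ the lower endpoint $\lm_n^+(p)$ itself need not be analytic in $p$: at a double eigenvalue ($\gm_n=0$) only $\tau_n$ and $\gm_n^2$ are analytic, not $\lm_n^\pm$ individually. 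The paper handles $n=0$ by parametrizing the segment $[\lm_0^+,\mu]$ and using analyticity of the (always simple) eigenvalue $\lm_0^+$ together with analyticity of $\chi_0$ on $U_0\times\Wp_q$. For $n\ge 1$ it devotes an entire appendix (Lemma~\ref{F-ana}) to this, splitting $\Wp_q$ into the open set $\{\gm_n^2\ne 0\}$, where the eigenvalues are locally analytic, and the analytic subvariety $Z_n=\{\gm_n^2=0\}$, and then invoking a removal-of-singularities theorem for analytic functions.

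\textbf{Item (ii).} Your ``cleanest route'' via the zero potential misapplies Lemma~\ref{ana-vanish}. That lemma requires vanishing on the entire real subspace $U_r$, not at a single point; in infinite dimensions an analytic function vanishing at one point need not vanish identically. The paper instead computes $\int_{\lm_k^+}^{\lm_{k+1}^-}\om = -\ii\pi$ directly for \emph{every} real-valued $q$ (via $\asin$ and the sign of the canonical root on each band), so that $F_0-F_n\equiv -\ii n\pi$ on all of $\Wp_q\cap\Hs^0_0$, and only then invokes Lemma~\ref{ana-vanish}. Your first sketched approach---summing ``residue-type contributions of $-\ii\pi$''---is exactly this, but it cannot be sidestepped via the zero potential; you have to carry out the band computation for general real $q$.

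\textbf{Item (iii)} is also glossed: saying $F_n$ ``vanishes like $\sqrt{\lm-\lm_n^+}$'' addresses only the endpoints, not the interior of the cut $G_n$. One needs $F_n|_{G_n^+}=-F_n|_{G_n^-}$ along the whole segment, which the paper verifies from the sign-flip of $\vs_n$ (respectively $\sqrt[+]{\lm-\lm_0^+}$) across $G_n$ in the product representation of $\om$. Items (iv) and (v) match the paper's approach.
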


\begin{proof}
(i)
Let us first consider the case where $n=0$. Since $\om$ is analytic on $(\C\setminus\bigcup_{k\ge 0} U_{k})\times \Wp_{q}$, it suffices to show that $F_{0}(\mu)$ is analytic on $\Wp_{q}$ for some fixed $\mu$ near $\partial U_{0}$. Choose the straight line $[\lm_{0}^{+},\mu]$ as the path of integration with parametrization
\[
  \lm_{t} = \lm_{0}^{+} + t(\mu-\lm_{0}^{+}).
\]
Then by the product representation \eqref{om} of $\om$
\[
  \int_{\lm_{0}^{+}}^{\mu} \om
   = \frac{1}{2\ii}\int_{0}^{1} \frac{\sqrt[+]{\mu-\lm_{0}^{+}}}{\sqrt[+]{t}} \chi_{0}(\lm_{t})\,\dt,\qquad
  \chi_{0}(\lm) = \prod_{k\ge 1} \frac{\lm_{k}^{\ld}-\lm}{\vs_{k}(\lm)}.
\]
Since $\lm_{0}^{+}$ is analytic on $\Wp$ and $\chi_{0}$ is analytic on $U_{0}\times \Wp_{q}$ -- cf. \cite[Appendix~A]{Kappeler:2005fb} and \cite[Corollary~12.8]{Grebert:2014iq} -- the claimed analyticity of $F_{0}$ follows.
In Lemma~\ref{F-ana} we obtain the analyticity of $F_{n}$, $n\ge 1$. This proof is a bit more work due to the fact that in this case $\lm_{n}^{+}$ may be a double eigenvalue.

(ii)
Note that $F_{0}(\lm) = F_{n}(\lm) + \int_{\lm_{0}^{+}}^{\lm_{n}^{+}}\om$.
Clearly, $F_{0}(\lm_{0}^{+}) = 0$ and $\int_{\lm_{n}^{-}}^{\lm_{n}^{+}}\om = 0$ for any $n\ge 1$ by Lemma~\ref{w-closed}, hence $F_{n}(\lm_{n}^{+}) = F_{n}(\lm_{n}^{-}) = 0$ and
\[
  \int_{\lm_{0}^{+}}^{\lm_{n}^{+}}\om = \sum_{k=0}^{n-1} \int_{\lm_{k}^{+}}^{\lm_{k+1}^{-}} \om.
\]
To proceed, first consider the case where $q$ is real-valued. 
In this case $\ii(-1)^{k}\sqrt[c]{\Dl^{2}(\lm)-4} > 0$ for $\lm_{k}^{+} < \lm < \lm_{k+1}^{-}$ -- see \cite[Section 5]{Kappeler:2003up} -- so
\begin{align*}
  \int_{\lm_{k}^+}^{\lm_{k+1}^-} \om &=
  \ii(-1)^{k}\int_{\lm_{k}^+}^{\lm_{k+1}^-}
  \frac{\dot\Dl(\lm)}{\sqrt[+]{4-\Dl^2(\lm)}}\,\dlm
   =
  \ii(-1)^{k}
  \asin\frac{\Dl(\lm)}{2}\bigg|_{\lm_{k}^+}^{\lm_{k+1}^-}
   =
  -\ii \pi,
\end{align*}
and hence $\int_{\lm_{0}^{+}}^{\lm_{n}^{+}}\om  = -\ii n\pi$.  The function $\int_{\lm_{0}^{+}}^{\lm_{n}^{+}}\om = F_{0}(\lm) - F_{n}(\lm)$ is analytic on $\Wp_{q}$ by item (i), therefore, $\int_{\lm_{0}^{+}}^{\lm_{n}^{+}}\om  = -\ii n\pi$ holds true on all of $\Wp_{q}$ in view of Lemma~\ref{ana-vanish}.

(iii)
In view of item (i) it remains to show that $F_{n}^{2}$ admits also for $\gm_{n}\neq 0$ an analytic extension from $U_{n}\setminus G_{n}$ to all of $U_{n}$. For $n=0$ write~\eqref{om} in the form
\[
  \om = \frac{1}{2\ii}\frac{1}{\sqrt[+]{\lm-\lm_{0}^{+}}}\chi_{0}(\lm)\,\dlm,
  \qquad
   \chi_{0}(\lm) =
        \prod_{k\ge 1} \frac{\lm_{k}^{\ld}-\lm}{\vs_{k}(\lm)},
\]
and similarly, for $n\ge 1$ write~\eqref{om} in the form
\[
  \om = \frac{1}{2\ii}\frac{\lm_{n}^{\ld}-\lm}{\vs_{n}(\lm)}\chi_{n}(\lm)\, \dlm,\qquad\;\;\; 
  \chi_{n}(\lm) = \frac{1}{\sqrt[+]{\lm-\lm_{0}^{+}}}
        \prod_{\atop{k\neq n}{k\ge 1}} \frac{\lm_{k}^{\ld}-\lm}{\vs_{k}(\lm)}.
\]
The mappings $\chi_{n}$, $n\ge 0$, are analytic on $U_{n}$ -- cf. \cite[Appendix A]{Kappeler:2005fb}. Moreover, the roots $\sqrt[+]{\lm-\lm_{0}^{+}}$ and $\vs_{n}(\lm)$, $n\ge 1$, respectively, admit opposite signs on opposite sides of $G_{0}$ and $G_{n}$, $n\ge 1$, respectively. Therefore, in view of $F_{n}(\lm) = \int_{\lm_{n}^{+}}^{\lm} \om$, for any $n\ge 0$ and $\lm\in G_{n}$,
\[
  F_{n}\big|_{G_{n}^{+}}(\lm) = - F_{n}\big|_{G_{n}^{-}}(\lm).
\]
Consequently, $F_{n}^{2}$ is continuous and hence analytic on all of $U_{n}$.

(iv)
If $q$ is real-valued, then for any $\lm\in G_{n}$
\[
  F_{n}(\lm\pm \ii 0) = \int_{\lm_{n}^{-}}^{\lm} \om\Big|_{G_{n}^{\pm}}
  = \pm \int_{\lm_{n}^{-}}^{\lm} \frac{(-1)^{n}\dDl}{\sqrt[+]{\Dl^{2}-4}} \dlm
  = \pm \arccosh \frac{(-1)^{n}\Dl(\lm)}{2}.
\]

(v)
At the zero potential, $\om(\lm,0) = 1/\bigl(2\ii\sqrt[+]{\lm}\bigr)\dlm$ which follows directly from the product representation \eqref{om} of $\om$. As $\lm_{0}^{+} = 0$ one gets $F_{0}(\lm,0) = \ii\sqrt[+]{\lm}$.\qed
\end{proof}

Since $F_{0} \equiv F_{n} - \ii n\pi$, we drop the subscript of $F_{0}$ in the sequel to simplify notation and denote
\begin{equation}
  F(\lm) \equiv F_{0}(\lm) = \int_{\lm_{0}^{+}}\om.
\end{equation}

\begin{rem*}
By exactly the same arguments one can show that $\om$ and $F_{n}$, $n\ge 0$ satisfy the properties stated in Lemma~\ref{w-closed} \& \ref{F-prop} when $\Ws$ is extended to the open neighborhood of $\Hs^{-1}_0$ within $\Hs^{-1}_{0,\C}$ constructed in \cite{Kappeler:2005fb}, and $\Ws_q$ is chosen as an open neighborhood of $q$ within $\Hs^{-1}_{0,\C}$ such that a common set of isolating neighborhoods exists. However, we will not make use of this fact.
\end{rem*}

If $q$ is real-valued, then we can integrate by parts and subsequently shrink the contour of integration to the interval $G_{n}$ to obtain
\begin{equation}
\label{J-F}
  J_{n,m}
   = -\frac{1}{\pi}\int_{\Gm_{n}} \lm^{m}F(\lm)\,\dlm
    = \frac{2}{\pi}\int_{G_{n}} \lm^{m}f_n(\lm)\,\dlm.
\end{equation}
Thus for real-valued potentials all action variables $J_{n,m}$ are real, and those on even levels are nonnegative. Moreover, by the mean value theorem,
\begin{align}
  \label{zt-In-Jn}
  J_{n,m} = \zt_{n,m}^{m} I_{n},
\end{align}
for some $\zt_{n,m}\in G_{n}$. By~\eqref{p-id} the actions on level zero satisfy
\begin{align}
  \label{tf-0}
  \sum_{n\ge 1} (2n\pi)I_{n} = \frac{1}{2}\int_{\T} q^{2}\,\dx = H_{0}.
\end{align}

In the sequel we derive similar formulae expressing the actions on any level $m\ge 0$ in terms of Hamiltonians of the \kdv hierarchy. To this end, we need the following expansion of $F$ for real-valued finite-gap potentials, which follows from the expansion of the discriminant $\Dl$ obtained in Appendix~\ref{a:dl-exp}.

\begin{prop}
\label{F-exp}
Suppose $q$ is a real-valued finite-gap potential with $[q] = 0$ and let $\nu_{n} = (n+1/2)\pi$. Then for any $K\ge 0$
\[
  F(\nu_{n}^{2})
   = -\ii\nu_{n} + \ii \sum_{0\le k \le K} \frac{H_{k}}{4^{k+1}\nu_{n}^{2k+3}}
     + O(\nu_{n}^{-2K-5}),
\]
and
\[
  F^{2}(\nu_{n}^{2})
   = -\nu_{n}^{2} + 2\sum_{0\le k\le K} \frac{H_{k}}{4^{k+1}\nu_{n}^{2k+2}}
   - \sum_{0\le k \le K}\sum_{0\le l \le k} \frac{H_{k-l}H_{l}}{4^{k+2}\nu_{n}^{2k+6}}
    + O(\nu_{n}^{-2K-4}),
\]
as $n\to \infty$.\fish
\end{prop}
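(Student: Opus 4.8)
The plan is to reduce the asymptotic expansion of $F(\nu_n^2)$ to a corresponding expansion of the discriminant $\Dl$ at the points $\nu_n^2 = (n+1/2)^2\pi^2$ — these are the midpoints between consecutive periodic eigenvalues, hence bounded away from the spectrum, which is exactly where $\Dl$ and the canonical root are well-controlled. Since $q$ is a real-valued finite-gap potential, all but finitely many gaps collapse, so on a fixed right half-line $\{\Re\lm \ge N^2\pi^2\}$ the form $\om$ reduces to $\om = \tfrac{1}{2\ii}(\lm-\lm_0^+)^{-1/2}\prod_{k}\tfrac{\lm_k^{\ld}-\lm}{\vs_k(\lm)}\,\dlm$ with only finitely many nontrivial factors in the product, and $F(\lm) = \int_{\lm_0^+}^{\lm}\om$. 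On this half-line one may equivalently write $F(\lm) = \arccosh\!\bigl(\tfrac{\Dl(\lm)}{2}\bigr)$ (with the appropriate branch fixed by Lemma~\ref{F-prop}), using the fact — recorded in the proof of Lemma~\ref{w-closed} — that $\partial_\lm\arccosh(\Dl/2) = \dDl/\sqrt{\Dl^2-4}$ and that $F(\lm) \to -\ii\sqrt[+]{\lm}$-type behaviour matches the free case (Lemma~\ref{F-prop}(v)). Thus the whole problem is: expand $\arccosh(\Dl(\nu_n^2)/2)$.

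Next I would invoke the expansion of the discriminant proved in Appendix~\ref{a:dl-exp}: at the points $\lm = \nu_n^2$ one has an asymptotic series of the form $\Dl(\nu_n^2) = 2(-1)^n\cosh(\text{something})$, or more precisely $\tfrac12\Dl(\nu_n^2) = (-1)^{n}\bigl(1 + \sum_{k\ge 0} c_k H_k \nu_n^{-2k-2} + O(\nu_n^{-2K-4})\bigr)$, where the coefficients $c_k$ are universal and the $H_k$ are the KdV Hamiltonians. (This is the standard fact that the trace of the monodromy matrix at $(n+1/2)\pi$ encodes the conserved densities; the finite-gap hypothesis makes all the series finite and removes convergence issues.) Granting that input, the computation is then purely formal: write $\Dl(\nu_n^2)/2 = (-1)^n(1+\eps_n)$ with $\eps_n = \sum_{k\ge 0} c_k H_k\nu_n^{-2k-2} + O(\nu_n^{-2K-4})$, and use $\arccosh(1+\eps) = \sqrt{2\eps}\,(1 - \tfrac{1}{12}\eps + \cdots)$ — equivalently expand $\cosh^{-1}$ via its known power series — to obtain $F(\nu_n^2) = -\ii\nu_n + \ii\sum_{0\le k\le K} H_k/(4^{k+1}\nu_n^{2k+3}) + O(\nu_n^{-2K-5})$ after matching the leading term $-\ii\nu_n$ (which comes from the $\lm^{1/2}$ growth and pins down the constants $c_k$ so that $c_k/(\text{combinatorial factor}) = 1/4^{k+1}$). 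Squaring this expansion termwise gives the stated formula for $F^2(\nu_n^2)$: the cross term $2\cdot(-\ii\nu_n)\cdot\ii\sum H_k\nu_n^{-2k-3}/4^{k+1}$ produces $2\sum H_k\nu_n^{-2k-2}/4^{k+1}$, the square of the sum produces the double sum $-\sum_k\sum_{l\le k} H_{k-l}H_l\,\nu_n^{-2k-6}/4^{k+2}$ by the Cauchy product, and $(-\ii\nu_n)^2 = -\nu_n^2$; one checks the error term degrades from $O(\nu_n^{-2K-5})$ to $O(\nu_n^{-2K-4})$ exactly because of the $\nu_n\cdot\nu_n^{-2K-5}$ cross term.

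The main obstacle is the discriminant expansion itself — i.e.\ establishing in Appendix~\ref{a:dl-exp} that $\Dl(\nu_n^2)$ admits an asymptotic expansion to arbitrary order with coefficients given precisely by the Hamiltonians $H_k/4^{k+1}$ (up to the combinatorial normalization), uniformly over the relevant class of potentials. This requires a careful analysis of the fundamental solutions $y_1, y_2$ of $L(q)y = \lm y$ on $[0,1]$ at energy $\lm = \nu_n^2$: one iterates the Volterra integral equation (the Picard series / Neumann series for the transfer matrix), collects terms by powers of $\nu_n^{-1}$, and identifies the emerging integrals $\int_\T q^2, \int_\T(q'^2 + \ldots), \ldots$ with the canonical KdV conserved densities $p_k$. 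Pinning down that the combination appearing is exactly $H_k$ (and not some other polynomial in the same differential monomials) is the delicate bookkeeping step; once that appendix is in place, the proof of Proposition~\ref{F-exp} is the short formal manipulation sketched above. A secondary, milder point is justifying the branch choices and the passage from $\int\om$ to $\arccosh(\Dl/2)$ uniformly on the half-line, but this is already essentially contained in Lemma~\ref{F-prop}(iv)–(v) and the sign analysis in the proof of Lemma~\ref{w-closed}.
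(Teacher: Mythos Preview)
Your high-level plan matches the paper's: reduce $F(\nu_n^2)$ to an inverse trigonometric function of $\Dl(\nu_n^2)/2$, invoke the discriminant expansion of Appendix~\ref{a:dl-exp}, and then square. The gap is in the middle step, and it is not cosmetic.

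You assert $\tfrac12\Dl(\nu_n^2) = (-1)^n\bigl(1 + \eps_n\bigr)$ with $\eps_n$ small, and then expand $\arccosh$ around $1$. But $\nu_n = (n+1/2)\pi$ lies in a \emph{band}, not a gap: at the free potential $\Dl(\nu_n^2) = 2\cos\nu_n = 0$, and in general the appendix gives $\tfrac12\Dl(\nu_n^2) = \cos\Th_N(\nu_n) + O(\nu_n^{-N-1})$ with $\Th_N(\nu_n) = \nu_n + O(\nu_n^{-3})$, so $\tfrac12\Dl(\nu_n^2)$ is $O(\nu_n^{-3})$, not close to $\pm 1$. Your ansatz is simply false, and the Puiseux expansion $\arccosh(1+\eps)=\sqrt{2\eps}(1-\eps/12+\dotsb)$ would in any case produce half-integer powers of $\nu_n$, inconsistent with the claimed series.

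What the paper does instead: on the band $(\lm_n^+,\lm_{n+1}^-)$ the canonical root is purely imaginary, so the correct primitive is $-\ii\arcsin$, not $\arccosh$. One gets the closed formula
\[
  F(\lm) = -\ii(n+\tfrac12)\pi - \ii\arcsin\!\Bigl((-1)^{n+1}\tfrac{\Dl(\lm)}{2}\Bigr),
  \qquad \lm_n^+\le \lm\le \lm_{n+1}^-.
\]
Then, writing $(-1)^{n+1} = -\sin\nu_n$ and using the addition formula for sine, $(-1)^{n+1}\cos\Th_N(\nu_n) = \sin\bigl(\Th_N(\nu_n)-\nu_n\bigr)$, so that $\arcsin$ evaluates \emph{exactly} to $\Th_N(\nu_n)-\nu_n$ and no Taylor expansion of the inverse function is needed at all. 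This immediately yields $F(\nu_n^2) = -\ii\Th_N(\nu_n) + O(\nu_n^{-N-1})$, which is the stated series. Your squaring argument for $F^2$ is fine.
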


\begin{proof}
Since $q$ is real-valued, the function $(-1)^{n+1} \Dl(\lm)$, for any $n\ge 0$, is strictly increasing from $-2$ to $2$ on $[\lm_{n}^{+},\lm_{n+1}^{-}]$ and the canonical root for $\lm_n^+ < \lm < \lm_{n+1}^-$ is given by
\[
  \sqrt[c]{\Dl^2(\lm) - 4}
   = (-1)^{n+1} \ii \sqrt[+]{4 - \Dl^2(\lm)}.
\]
Furthermore one computes for $\lm_n^+ < \lm < \lm_{n+1}^-$
\begin{align*}
  \partial_\lm \left(-\ii \asin\left((-1)^{n+1}\frac{\Dl(\lm)}{2}\right) \right)
   &= \ii \frac{(-1)^{n}\Dl^\ld(\lm)/2}{\sqrt[+]{1-\Dl^2(\lm)/4}}
   = \frac{\Dl^\ld(\lm)}{\sqrt[c]{\Dl^2(\lm) - 4}}.
\end{align*}
Hence for any $\lm_n^+ \le \lm \le \lm_{n+1}^-$,
\begin{align*}
  F(\lm)
  &= F(\lm_n^+) + \int_{\lm_n^+}^{\lm} \frac{\Dl^\ld(\lm)}{\sqrt[c]{\Dl^2(\lm) - 4}} \, \dlm \\
  &= -\ii n \pi + \left[-\ii \asin\left((-1)^{n+1}\frac{\Dl(\lm)}{2}\right) \right]_{\lm_n^+}^\lm \\
  &= -\ii (n + 1/2) \pi - \ii\asin\left((-1)^{n+1}\frac{\Dl(\lm)}{2}\right).
\end{align*}
Since $q$ as a finite-gap potential is smooth, it follows from Lemma \ref{dl-exp} that with $\nu_{n} = (n+1/2)\pi$ for any $N\ge 1$
\begin{equation}
  \label{dl-as}
  \frac{\Dl(\nu_{n}^2)}{2} = \cos \Th_N(\nu_{n}) + O(\nu_{n}^{-N-1}), \qquad n\to \infty,
\end{equation}
where
\begin{equation}
  \label{Th-as}
  \Th_N(\nu_{n}) = \nu_{n} - \sum_{3\le 2k+3\le N} \frac{H_{k}}{4^{k+1}\nu_{n}^{2k+3}}
                 = \nu_{n} + O(\nu_{n}^{-1}), \qquad n\to \infty.
\end{equation}
Using that $\partial_z \asin(z) = {1}/{\sqrt[+]{1-z^2}}$ and taking into account the estimates \eqref{dl-as}-\eqref{Th-as} one gets by the mean value theorem
\[
  \abs*{\asin\p*{ (-1)^{n+1}\frac{\Dl(\nu_{n}^{2})}{2} }
      - \asin\p*{ (-1)^{n+1} \cos \Th_N(\nu_{n}) }} = O(\nu_{n}^{-N-1}),
\]
and hence
\[
  F(\nu_{n}^{2}) = -\ii \nu_{n} - \ii\asin\p*{ (-1)^{n+1} \cos \Th_N(\nu_{n}) } + O(\nu_{n}^{-N-1}),
  \qquad n\to\infty.
\]

Finally, writing $(-1)^{n+1} = -\sin \nu_n$ one gets by the addition theorem for the sine
\[
  (-1)^{n+1} \cos \Th_N(\nu_{n}) = \sin\p*{ \Th_N(\nu_{n}) - \nu_n },
\]
and hence in view of~\eqref{Th-as}
\[
  \asin\p*{ (-1)^{n+1} \cos \Th_N(\nu_{n}) } = \Th_N(\nu_{n}) - \nu_{n}.
\]
This gives the claimed expansion for $F(\nu_{n}^{2})$. The claim for $F^{2}(\nu_{n}^{2})$ then follows by a straightforward computation.~\qed
\end{proof}

We are now in the position to prove the trace formula on an arbitrary level $m\ge 0$. The proof appears as a straightforward generalization from the case $m=0$ found in \cite[Appendix~E]{Kappeler:2003up}.

\begin{lem}[Trace formula]
\label{tf}
Suppose $m\ge 0$ and $q\in \Hs_0^m$, then
\begin{align}
  \label{tf-m}
  \sum_{n\ge 1} (2n\pi)J_{n,m}
   = \frac{1}{4^m}\H_m - \frac{2}{4^{m}}\sum_{0\le k\le m-2} \H_{m-2-k}\H_k,
\end{align}
where an empty sum denotes zero, and $\H_k$ denotes the $k$th Hamiltonian in the
\kdv hierarchy.~\fish
\end{lem}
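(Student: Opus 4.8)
The plan is to derive the trace formula by summing a suitable contour integral and evaluating it in two ways, exactly as in the level-zero case of \cite[Appendix~E]{Kappeler:2003up}. First I would reduce to the case of a real-valued finite-gap potential $q$: both sides of \eqref{tf-m} are real-analytic functions on $\Hs_0^m$ (the left side by \eqref{Jn-om} and the analyticity established in Lemma~\ref{F-prop}, the right side since the $\H_k$ are polynomials in $q$ and its derivatives), finite-gap potentials are dense in $\Hs_0^m$, and the identity is continuous in $q$; so it suffices to prove it for $q$ finite-gap, where the clean asymptotics of Proposition~\ref{F-exp} are available.

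Next, using \eqref{J-F} together with $F \equiv F_n - \ii n\pi$, I would write $(2n\pi) J_{n,m}$ as a contour integral of $\lm^m F(\lm)$ around $G_n$ and sum over $n$. The key move is to collapse the sum of these small circuits into a single integral over a large circle $\abs{\lm} = R_N$ (with $R_N \to \infty$ chosen between two consecutive gaps, e.g. $R_N = \nu_N^2$): on the complement of $\bigcup U_n$ the form $\lm^m \om$ is analytic (Lemma~\ref{w-closed}(i),(ii)), and on the segments between neighboring gaps the contributions telescope because $F_0 = F_n - \ii n\pi$ differs only by a constant, so only the endpoint terms and the outer circle survive. Concretely I expect
\[
  \sum_{n\ge 1}(2n\pi)J_{n,m}
   = -\frac{1}{\pi}\oint_{\abs{\lm}=R_N}\lm^m F(\lm)\,\dlm + (\text{boundary terms that cancel}),
\]
so that the whole sum is computed by a residue/Laurent-coefficient extraction from the asymptotic expansion of $F$ at infinity.

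Then I would substitute the expansion of Proposition~\ref{F-exp}: since $F(\lm) = -\ii\sqrt[+]{\lm} + \ii\sum_{k\ge 0} \H_k/(4^{k+1}\lm^{k+3/2}) + \dots$ (the expansion at $\nu_n^2$ being the restriction of an expansion in $\lm$ along the real axis), the integral $-\frac{1}{\pi}\oint \lm^m F\,\dlm$ picks out the coefficient of $\lm^{-1}$ in $\lm^m F(\lm)$. The $-\ii\sqrt[+]{\lm}$ term contributes nothing to a $\lm^{-1}$ coefficient for $m\ge 0$; the term $\ii\H_k/(4^{k+1}\lm^{k+3/2})$ never produces an integer power, hence by itself contributes nothing either — which tells me I must instead work with $F^2$, or equivalently integrate $\lm^m \om = \lm^m\,\dd F$ and use $\dd(F^2) = 2F\,\dd F$. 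This is the standard device: one has $J_{n,m} \propto \oint \lm^{m} F\,\dlm$ but the residues at infinity of $F$ itself vanish, so instead one expresses things through $F^2$, whose expansion (second formula of Proposition~\ref{F-exp}) does have the relevant integer powers $\lm^{-k-1}$ with coefficients $2\H_k/4^{k+1}$ and the quadratic correction $-\sum \H_{k-l}\H_l/4^{k+2}\lm^{-k-3}$. Extracting the coefficient of $\lm^{-1}$ from $\lm^m F^2(\lm)$ then yields the main term $\H_m/4^m$ from the linear part and the double sum $-\frac{2}{4^m}\sum_{0\le k\le m-2}\H_{m-2-k}\H_k$ from the quadratic part (the index shift $k+3 \mapsto$ matching $\lm^{-m-1}$ giving $k = m-2$, together with relabeling $l \leftrightarrow k-l$), which is precisely \eqref{tf-m}.

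The main obstacle is bookkeeping the passage from the sum of gap-circuits to the residue at infinity cleanly: one must justify that the segment integrals between consecutive isolating neighborhoods telescope (handling the constant shift $-\ii n\pi$ and the fact that $\lm^m$, unlike the $m=0$ case, is not constant, so the "constant shift cancels" argument needs the combinatorial identity $\sum_n [\,\cdot\,]$ rearranged carefully), and that the error term $O(\nu_n^{-2K-4})$ in the $F^2$-expansion, when integrated against $\lm^m$ over the circle of radius $\nu_N^2$, tends to zero as $N\to\infty$ provided $K$ is taken large enough relative to $m$ (this forces $q\in\Hs_0^m$ rather than merely $\Hs_0^0$, and is why finite-gap — hence smooth — potentials make the expansion unconditionally valid before the density argument). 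Once the contour manipulation is pinned down, the extraction of coefficients is the routine computation the statement alludes to.
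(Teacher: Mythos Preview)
Your overall architecture is right and matches the paper: reduce to real finite-gap potentials by density and analyticity, then evaluate a large-circle contour integral in two ways using the expansion of $F^{2}$ from Proposition~\ref{F-exp}. Your residue extraction at the end is also correct.

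The gap is in the middle step, where you try to collapse $\sum_{n}(2n\pi)J_{n,m}$ into a single circle integral. Your proposed mechanism --- integrate $\lm^{m}F$ around each $\Gm_{n}$ and hope that segment integrals ``telescope'' because $F_{0}=F_{n}-\ii n\pi$ --- does not work as stated: the summand $(2n\pi)J_{n,m}=-2n\int_{\Gm_{n}}\lm^{m}F\,\dlm$ carries an $n$-dependent prefactor that cannot be produced by contour deformation of a single $1$-form $\lm^{m}F\,\dlm$, and $F$ itself is branched across the gaps, so there is no clean deformation argument for it. Your remark that one should ``instead work with $F^{2}$'' is the right instinct, but your stated reason (that the residues of $F$ at infinity vanish) is not the key point, and the identity $\dd(F^{2})=2F\,\dd F$ is not the relevant link.

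What the paper does --- and what you are missing --- is to integrate $\lm^{m}F^{2}$ around each gap from the outset and use Lemma~\ref{F-prop}(iii). Since $F^{2}=(F_{n}-\ii n\pi)^{2}=F_{n}^{2}-2\ii n\pi F_{n}-n^{2}\pi^{2}$ and $F_{n}^{2}$ is \emph{analytic} on all of $U_{n}$, the contour integral kills the even powers of $F_{n}$ and leaves
\[
  \frac{1}{\ii\pi}\int_{\Gm_{n}}\lm^{m}F^{2}\,\dlm
  \;=\; -\frac{2}{\ii\pi}(\ii n\pi)\int_{\Gm_{n}}\lm^{m}F_{n}\,\dlm
  \;=\; (2n\pi)J_{n,m},
\]
so the factor $2n\pi$ appears automatically. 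Moreover $F^{2}=F_{0}^{2}$ is single-valued and analytic on $\C\setminus\bigcup_{k\ge 1,\gm_{k}\neq 0}G_{k}$, so for a finite-gap potential one can take a fixed circle $C_{r}$ enclosing all open gaps and deform directly:
\[
  \int_{C_{r}}\lm^{m}F^{2}\,\dlm \;=\; \sum_{n}\int_{\Gm_{n}}\lm^{m}F^{2}\,\dlm,
\]
with no telescoping, no segment integrals, and no limit $R_{N}\to\infty$ needed. Since $F^{2}(\lm)=O(\lm)$ for a finite-gap potential, $F^{2}$ is genuinely meromorphic at infinity and the residue computation from the $F^{2}$-expansion is rigorous as written.
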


\begin{proof}
Note that $\sum_{n\ge 1} n^{2m}\abs{\gm_n}^2 < \infty$ uniformly on bounded subsets of $\Hs_{0,\C}^{m}$ -- see \cite{Marcenko:1975wf,Kappeler:2001hsa} and also Appendix~\ref{s:appendix}. Moreover,
\[
  (2n\pi)J_{n,m} = O(n^{2m+1}I_{n}) = O(n^{2m}\gm_{n}^{2})
\]
uniformly on bounded subsets of $\Wp\cap \Hs_{0,\C}^{m}$ -- see~\cite{Kappeler:2003up}. Thus the sum of the actions on level $m$ converges locally uniformly to a real-analytic function on $\Hs^{m}_{0}$. Since also the right hand side of \eqref{tf-m} is real-analytic on $\Hs_{0}^{m}$, it suffices to prove the claim on the dense subset of real-valued finite-gap potentials.

Suppose $q$ is a real-valued finite-gap potential, then there exists $N\ge 1$ such that $\gm_{n}=0$ for $n > N$. Let $C_{r}$ denote a circle around the origin of sufficiently large radius $r$ so that all gaps $G_{n}$, $1\le n\le N$ are enclosed. Then $F^{2} = F_{0}^{2}$ is analytic outside $C_{r}$ by Lemma~\ref{F-prop} and by contour deformation
\[
  \int_{C_{r}} \lm^{m}F^{2}(\lm)\,\dlm = \sum_{1\le n\le N} \int_{\Gm_{n}} \lm^{m} F^{2}(\lm)\,\dlm.
\]
Since $F_{n}^{2}$ is analytic on $U_{n}$, the even powers of $F_{n}$ in the expansion of $F^{2}= (F_{n}-\ii n\pi)^{2}$ do not contribute to the contour integral, thus
\[
   \frac{1}{\ii\pi}\int_{\Gm_{n}} \lm^{m} F^{2}(\lm)\,\dlm =
  -\frac{2}{\ii\pi}\int_{\Gm_{n}} \lm^{m} (\ii n\pi )F_{n}(\lm)\,\dlm
   = (2 n\pi) J_{n,m}.
\]
On the other hand, according to Proposition~\ref{F-exp} for $\lm_{n} = (n+1/2)^{2}\pi^{2}$
\begin{equation}
  \label{F2-exp}
  F^{2}(\lm_{n})
   = -\lm_{n} + \sum_{0\le k\le m} \frac{2H_{k}}{4^{k+1}\lm_{n}^{k+1}}
   - \sum_{\atop{0\le k \le m}{0\le l \le k}} \frac{H_{k-l}H_{l}}{4^{k+2}\lm_{n}^{k+3}}
    + O(\lm_{n}^{-m-2}),
\end{equation}
as $n\to \infty$. One infers directly from~\eqref{om} that for a finite-gap potential, $\om = O(1/\sqrt{\lm})$ and hence $F^{2}(\lm) = O(\lm)$ as $\abs{\lm}\to \infty$. Therefore, $F^{2}(\lm)$ is meromorphic at infinity and we obtain from~\eqref{F2-exp} using Cauchy's Theorem
\begin{align*}
  \frac{1}{\ii\pi}\int_{C_{r}} \lm^{m}F^{2}(\lm)\,\dlm
  &= \frac{1}{4^{m}}H_{m} - \frac{2}{4^{m}}\sum_{0\le k \le m-2} H_{m-2-k}H_{k}.
\end{align*}
This proves the trace formula.\qed
\end{proof}

As an immediate consequence we obtain for $q\in\Hs_{0}^{m}$
\[
  4^{m}\sum_{n\ge 1} (2n\pi)J_{n,m} = \frac{1}{2} \int_{\T} (\partial_{x}^{m}q)^{2} \dx + \dotsb,
\]
where $\dotsb$ comprises only lower order derivatives of $q$. In Sections~\ref{s:act-est} and \ref{s:sob-est} this identity is used to compare the sum $\sum_{n\ge 1} (2n\pi) J_{n,m}$ of the actions on level $m$ with the corresponding Sobolev norm of the potential. To proceed, we need an estimate of the action $J_{n,m}$ on level $m\ge 0$ in terms of the weighted action $(n\pi)^{2m}I_{n}$.

\begin{prop}
\label{InJn}
Suppose $q\in \Hs_{0}^{0}$, then for $n\ge 4\n{q}_0$ and $m\ge 0$,
\[
  2^{-m}(n\pi)^{2m}I_{n} \le J_{n,m} \le 2^{m}(n\pi)^{2m}I_{n},
\]
while the remaining actions for $1\le n < 4\n{q}_{0}$ satisfy
\[
-5^{m}\n{q}_{0}^{2m} I_n \le J_{n,m} \le (256)^{m}\n{q}_{0}^{2m}I_{n}.\fish
\]
\end{prop}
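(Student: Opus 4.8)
\emph{Proof plan.} The starting point is the mean-value representation \eqref{zt-In-Jn}: since $q$ is real, $J_{n,m} = \zt_{n,m}^{m}I_{n}$ for some $\zt_{n,m}$ in the gap interval $G_{n} = [\lm_{n}^{-},\lm_{n}^{+}]$, and $I_{n}\ge 0$ by \eqref{J-F}. Hence the proposition reduces to a two-sided bound for an arbitrary point of $G_{n}$, uniform in $q$, and such a bound is precisely the localization of the periodic spectrum. I would invoke Proposition~\ref{ev-as} (together with the classical estimates recalled in Appendix~\ref{s:appendix}) in the two forms: (a) $\frac{1}{2}(n\pi)^{2}\le \lm_{n}^{-}\le \lm_{n}^{+}\le 2(n\pi)^{2}$ whenever $n\ge 4\n{q}_{0}$, and (b) $-5\n{q}_{0}^{2}\le \lm_{n}^{-}\le \lm_{n}^{+}\le 256\n{q}_{0}^{2}$ whenever $1\le n < 4\n{q}_{0}$. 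Here (b) combines the lower bound $\lm_{n}^{-}\ge \lm_{0}^{+}\ge -5\n{q}_{0}^{2}$ for the bottom of the spectrum with the upper localization of $\lm_{n}^{+}$ together with $n^{2}\pi^{2}<16\pi^{2}\n{q}_{0}^{2}$; the constants $2$, $5$, $256$ and the threshold $4\n{q}_{0}$ are exactly what Appendix~\ref{s:appendix} delivers. One may assume $I_{n}>0$ throughout, since for $I_{n}=0$ also $J_{n,m}=0$ and all inequalities are trivial.

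Granting (a), for $n\ge 4\n{q}_{0}$ the point $\zt_{n,m}$ lies in $[\frac{1}{2}(n\pi)^{2},\,2(n\pi)^{2}]\subset(0,\infty)$, so raising to the $m$-th power preserves the inequalities and gives $2^{-m}(n\pi)^{2m}\le \zt_{n,m}^{m}\le 2^{m}(n\pi)^{2m}$; multiplying by $I_{n}\ge 0$ yields the first pair of estimates. Granting (b), for $1\le n < 4\n{q}_{0}$ we have $-5\n{q}_{0}^{2}\le \zt_{n,m}\le 256\n{q}_{0}^{2}$, and since $\zt_{n,m}$ may now be negative I would split on its sign and the parity of $m$: if $\zt_{n,m}\ge 0$ then $0\le \zt_{n,m}^{m}\le (256\n{q}_{0}^{2})^{m}=256^{m}\n{q}_{0}^{2m}$; if $\zt_{n,m}<0$ and $m$ is even then $0\le \zt_{n,m}^{m}=\abs{\zt_{n,m}}^{m}\le (5\n{q}_{0}^{2})^{m}\le 256^{m}\n{q}_{0}^{2m}$; and if $\zt_{n,m}<0$ and $m$ is odd then $-5^{m}\n{q}_{0}^{2m}\le \zt_{n,m}^{m}<0$. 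In every case $-5^{m}\n{q}_{0}^{2m}\le \zt_{n,m}^{m}\le 256^{m}\n{q}_{0}^{2m}$, and multiplying by $I_{n}\ge 0$ gives the second pair.

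The only substantial input is thus the uniform localization of the periodic eigenvalues with explicit constants; everything else is the mean-value identity \eqref{zt-In-Jn} and the monotonicity of $t\mapsto t^{m}$ on $[0,\infty)$. Accordingly the real obstacle is not in this proof but in Appendix~\ref{s:appendix}: showing that for real $q$ with $[q]=0$ the gap $G_{n}$ is trapped in $[\frac{1}{2}(n\pi)^{2},\,2(n\pi)^{2}]$ once $n\ge 4\n{q}_{0}$, and that the finitely many remaining gaps lie in a fixed $O(\n{q}_{0}^{2})$-window about the origin; matching the constants there is what pins down $2$, $5$, and $256$.
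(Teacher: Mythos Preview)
Your proposal is correct and follows exactly the paper's approach: invoke the mean-value identity $J_{n,m}=\zt_{n,m}^{m}I_{n}$ with $\zt_{n,m}\in G_{n}$, then use Proposition~\ref{ev-as} to trap $\zt_{n,m}$ in the intervals $[\tfrac{1}{2}(n\pi)^{2},2(n\pi)^{2}]$ for $n\ge 4\n{q}_{0}$ and $[-5\n{q}_{0}^{2},256\n{q}_{0}^{2}]$ for $1\le n<4\n{q}_{0}$, and raise to the $m$th power. The paper's proof is terser (it omits your sign/parity case split for $\zt_{n,m}^{m}$ in the low-$n$ regime), but the argument is the same.
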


\begin{proof}
Recall from \eqref{zt-In-Jn} that $J_{n,m} = \zt_{n,m}^{m}I_{n}$ with $\zt_{n,m}\in G_{n}$. With Proposition~\ref{ev-as} from the appendix we find for $n\ge 4\n{q}_{0}$,
\[
  \abs{\zt_{n,m}-(n\pi)^{2}} \le 4\n{q}_{0},
\]
while for $1\le n < 4\n{q}_{0}$,
\[
  -5\n{q}_{0}^2 \le (1+\n{q}_{0})\n{q}_{0} \le \zt_{n,m} \le 256\n{q}_{0}^{2}.\qed
\]
\end{proof}

\section{Estimating the actions}
\label{s:act-est}

In this section we obtain an estimate of all weighted $\ell^{1}$-norms $\n{I(q)}_{\ell^{1}_{2m+1}}$, $m\ge 1$, in terms of the Sobolev norms $\n{q}_{m}$ of $q$. This will be done in two steps. First, we estimate $\n{I(q)}_{\ell^{1}_{2m+1}}$ in terms of the sum $\sum_{n\ge 1} (2n\pi)J_{n,m}$ of the actions on level $m$ and a remainder depending solely on the $L^{2}$-norm $\n{q}_{0}$. Second, we express $\sum_{n\ge 1} (2n\pi)J_{n,m}$ through Hamiltonians of the \kdv hierarchy using the trace formula. The polynomial structure of these Hamiltonians then allows us to prove the claimed estimate.

\begin{lem}
\label{iqj-est}
Uniformly for all $q\in \Hs_{0}^{m}$ with $m\ge 0$,
\[
  \n{I(q)}_{\ell^{1}_{2m+1}}
  \le
  8^m\sum_{n\ge 1}(2n\pi) J_{n,m} +
  (8\pi)^{2m}\n{q}_{0}^{2m+2}.\fish
\]
\end{lem}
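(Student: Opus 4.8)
The plan is to split the weighted $\ell^1$-sum of actions according to the dichotomy supplied by Proposition~\ref{InJn}, namely the threshold $n\ge 4\n{q}_0$ versus $1\le n<4\n{q}_0$. For the high modes $n\ge 4\n{q}_0$, Proposition~\ref{InJn} gives $(n\pi)^{2m}I_n\le 2^m J_{n,m}$, so that $(2n\pi)^{2m+1}I_n = 2^{2m+1}(n\pi)^{2m}(n\pi)I_n \le 2^{2m+1}\cdot 2^m (2n\pi) J_{n,m}$ up to a clean power of $2$; summing over these $n$ contributes at most $8^m\sum_{n\ge 1}(2n\pi)J_{n,m}$ (using that the level-$m$ actions are nonnegative for even $m$, and for odd $m$ one still only adds nonnegative terms on the left while the right-hand sum already appears with its $8^m$ factor — I'd keep the full sum on the right to be safe, exactly as in the statement).

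For the low modes $1\le n<4\n{q}_0$, the point is that there are fewer than $4\n{q}_0$ of them and each $I_n$ is controlled by the $L^2$-norm alone. Indeed, Parseval's identity \eqref{tf-0}, $\sum_{n\ge1}(2n\pi)I_n = H_0 = \tfrac12\n{q}_0^2$, gives in particular $I_n\le \tfrac{1}{4n\pi}\n{q}_0^2\le \n{q}_0^2$ for every $n\ge 1$. Hence for $n<4\n{q}_0$ we have $(2n\pi)^{2m+1}I_n \le (8\pi\n{q}_0)^{2m+1}\n{q}_0^2 / (\text{something})$; being a bit more careful, $(2n\pi)^{2m+1}\le(8\pi\n{q}_0)^{2m+1}$ and there are at most $4\n{q}_0$ indices, so the total low-mode contribution is bounded by $4\n{q}_0\cdot(8\pi\n{q}_0)^{2m+1}\cdot\n{q}_0^2$ — but this has the wrong power of $\n{q}_0$. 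The correct bookkeeping is to use $(2n\pi)I_n$ directly: $\sum_{1\le n<4\n{q}_0}(2n\pi)^{2m+1}I_n = \sum(2n\pi)^{2m}(2n\pi)I_n \le (8\pi\n{q}_0)^{2m}\sum_{n\ge1}(2n\pi)I_n = (8\pi\n{q}_0)^{2m}\cdot\tfrac12\n{q}_0^2 \le (8\pi)^{2m}\n{q}_0^{2m+2}$, which is exactly the stated remainder. This is the cleaner route and avoids counting indices at all.

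Combining the two pieces yields $\n{I(q)}_{\ell^1_{2m+1}} = \sum_{n\ge1}(2n\pi)^{2m+1}I_n \le 8^m\sum_{n\ge1}(2n\pi)J_{n,m} + (8\pi)^{2m}\n{q}_0^{2m+2}$, as claimed. The main (and essentially only) subtlety is the constant-chasing: making sure the power of $2$ from $(2n\pi)^{2m+1} = 2^{2m}(n\pi)^{2m}\cdot(2n\pi)$ times the factor $2^m$ from Proposition~\ref{InJn} collapses into $8^m$ after pairing $(n\pi)^{2m}I_n$ with $J_{n,m}$ — i.e. $2^{2m}\cdot 2^m = 8^m$ — and that the low-mode remainder is handled via the weight-$1$ Parseval sum \eqref{tf-0} rather than by a crude cardinality bound, since the latter produces an extra factor of $\n{q}_0$ and spoils the exponent. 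There is no analytic difficulty here; once the real-analyticity/density reductions of the earlier lemmas are in place (and Proposition~\ref{InJn} only needs $q\in\Hs_0^0$), the estimate is purely arithmetic.
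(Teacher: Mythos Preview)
Your low-mode estimate is clean and matches the paper exactly: bounding $(2n\pi)^{2m}$ by $(8\pi\n{q}_0)^{2m}$ for $n<4\n{q}_0$ and then invoking \eqref{tf-0} gives the remainder $(8\pi)^{2m}\n{q}_0^{2m+2}$ without any index counting.

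However, there is a genuine gap in the high-mode step. After obtaining
\[
  \sum_{n\ge 4\n{q}_0}(2n\pi)^{2m+1}I_n \le 8^m\sum_{n\ge 4\n{q}_0}(2n\pi)J_{n,m},
\]
you replace the partial sum on the right by the full sum $\sum_{n\ge 1}(2n\pi)J_{n,m}$. This is only legitimate if the omitted terms $\sum_{1\le n<4\n{q}_0}(2n\pi)J_{n,m}$ are nonnegative. For even $m$ that is true (since $J_{n,m}=\zt_{n,m}^m I_n\ge 0$), but for odd $m$ it can fail: by Proposition~\ref{ev-as} the low gaps $G_n$ may sit on the negative real axis, so $\zt_{n,m}<0$ and hence $J_{n,m}<0$. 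Your parenthetical remark about ``adding nonnegative terms on the left'' does not address this; the issue is that extending the sum on the \emph{right} can decrease it.

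The paper closes this gap using the lower bound in Proposition~\ref{InJn}: for $1\le n<4\n{q}_0$ one has $J_{n,m}\ge -5^m\n{q}_0^{2m}I_n$, so
\[
  \sum_{n\ge 4\n{q}_0}(2n\pi)J_{n,m}
  \le \sum_{n\ge 1}(2n\pi)J_{n,m} + 5^m\n{q}_0^{2m}\sum_{1\le n<4\n{q}_0}(2n\pi)I_n.
\]
The extra term is at most $8^m\cdot 5^m\cdot\tfrac12\n{q}_0^{2m+2}=\tfrac12\,40^m\n{q}_0^{2m+2}$, which together with the low-mode contribution $\tfrac12(8\pi)^{2m}\n{q}_0^{2m+2}$ fits inside $(8\pi)^{2m}\n{q}_0^{2m+2}$ since $40^m\le(64\pi^2)^m=(8\pi)^{2m}$. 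With this correction your argument becomes the paper's proof.
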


\begin{proof}
Clearly, by Proposition~\ref{InJn}
\[
  \sum_{n\ge 4\n{q}_0} (2n\pi)^{2m+1}I_{n}
   \le 8^{m}\sum_{n\ge 4\n{q}_0} (2n\pi)J_{n,m}.
\]
For $n < 4\n{q}_0$ the actions $J_{n,m}$ may become negative, albeit, with the lower bound $-5^{m}\n{q}_{0}^{2m} I_n \le J_{n,m}$, hence
\[
  \sum_{n \ge 4\n{q}_0} (2n\pi)J_{n,m}
  \le
  \sum_{n \ge 1} (2n\pi)J_{n,m}
  +
  5^{m}\n{q}_0^{2m}\sum_{1\le n < 4\n{q}_0} (2n\pi)I_{n}.
\]
The remaining actions may then be estimated by
\begin{align*}
  \sum_{1\le n < 4\n{q}_0} (2n\pi)^{2m+1}I_{n}
   &\le (8\pi)^{2m}\n{q}_{0}^{2m}\sum_{1\le n < 4\n{q}_0} (2n\pi) I_{n}.
\end{align*}
Both estimates together with $2\n{I(q)}_{\ell^{1}_{1}} = \n{q}_{0}^{2}$ give the claim.\qed
\end{proof}

\begin{proof}[Proof of Theorem~\ref{act-sob-est} (i) for $m=1$.]
Lemma~\ref{iqj-est} and the trace formula \eqref{tf-m} yield
\[
  \n{I(q)}_{\ell^{1}_{3}}
   \le 2\H_1(q) + 8^2\pi^2\n{q}_0^4.
\]
Using the $L^\infty$-estimate $\n{q}_{L^{\infty}} \le \n{q_{x}}_{0}$ on $\Hs_{0}^{1}$, we obtain for the Hamiltonian
\[
  \abs{\H_1(q)}
    \le \frac{1}{2}\n{q_x}_0^2 + \n{q}_{L^{\infty}}\n{q}_0^2
    \le \n{q}_1^2 + \frac{1}{2}\n{q}_0^4.
\]
Thus we arrive at
\[
  \n{I(q)}_{\ell^{1}_{3}} \le 2\n{q}_{1}^{2} + 16^2\pi^2\n{q}_{0}^{4}.\qed
\]
\end{proof}

To proceed with the general case $m\ge 2$, we need an estimate of the Hamiltonian $\H_{m}(q)$ for $m\ge 2$. To this end, we recall some well known facts about the Hamiltonians in the \kdv hierarchy -- see for example \cite[Theorem~1]{McKean:1975fz}.

\begin{lem}
\label{h-form}
The $m$th Hamiltonian in the \kdv hierarchy has the form
\[
  \H_m(q)  = \frac{1}{2}\int_\T
             \left[(\partial_{x}^{m}q)^2 + p_m(q,\partial_{x}q,\ldots,\partial_{x}^{m-1}q)\right]\,\dx,
\]
with $p_m$ being a homogeneous polynomial of degree $m+2$, without constant term, where $q$ counts as 1 degree and differentiation as 1/2 degree.\fish
\end{lem}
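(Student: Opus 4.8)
The plan is to obtain the structural form of $H_m$ from the standard recursive construction of the \kdv hierarchy, as in \cite[Theorem~1]{McKean:1975fz}. One passes to the Riccati equation associated with $L(q)y = \lm y$: writing $\lm = -\zt^2$ and seeking a formal solution $y = \exp\int\chi\,\dx$, the logarithmic derivative $\chi = \chi(x,\zt)$ satisfies $\chi' + \chi^2 = q + \zt^2$. Inserting the ansatz $\chi = \zt + \sum_{k\ge1}\chi_k\zt^{-k}$ and comparing powers of $\zt$ gives $\chi_1 = q/2$, $\chi_2 = -q'/4$, and the recursion
\[
  \chi_{k+1} = -\tfrac12\Bigl(\chi_k' + \sum_{\substack{i+j=k\\ i,j\ge1}}\chi_i\chi_j\Bigr),\qquad k\ge1,
\]
so each $\chi_k$ is a differential polynomial in $q$. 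As is classical, the even-index $\chi_{2j}$ are total derivatives (a parity argument comparing the two Bloch solutions, whose logarithmic derivatives are $\chi(x,\pm\zt)$ and whose sum is exact), while the $\chi_{2j+1}$, $j\ge1$, provide the densities of the hierarchy; concretely $H_m$ is a fixed nonzero multiple of $\int_\T\chi_{2m+3}\,\dx$, the multiple being pinned down by the normalisations of $H_0$ and $H_1$ already recorded (equivalently by the trace formula of Lemma~\ref{tf} together with Proposition~\ref{F-exp}).

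Next I would track the grading in which $q$ counts $1$ and $\partial_x$ counts $\tfrac12$, so that $q^{(j)}$ has degree $1+j/2$. The recursion is degree-homogeneous: $\chi_k'$ raises the degree of $\chi_k$ by $\tfrac12$, and every product $\chi_i\chi_j$ with $i+j=k$ carries the same degree as $\chi_k'$. Since $\chi_1=q/2$ has degree $1$, induction yields that $\chi_k$ is homogeneous of degree $(k+1)/2$; in particular the density $\chi_{2m+3}$ of $H_m$ is homogeneous of degree $m+2$. (Homogeneity also follows from the scaling symmetry $q(x)\mapsto\lm^2q(\lm x)$ of the hierarchy.) Since $m+2>0$ and every monomial produced by the recursion genuinely contains $q$, there is no constant term.

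It remains to isolate the leading term. The part of $\chi_{2m+3}$ that is linear in $q$ is, by the recursion, a constant multiple of $q^{(2m+2)}$, which integrates to zero over $\T$. By homogeneity the quadratic part of $\chi_{2m+3}$ is a sum of monomials $q^{(a)}q^{(b)}$ with $a+b=2m$, and integration by parts gives $\int_\T q^{(a)}q^{(b)}\,\dx = (-1)^{a+m}\int_\T(\partial_x^m q)^2\,\dx$ for each of them; hence the quadratic part of $H_m$ equals a constant times $\int_\T(\partial_x^m q)^2\,\dx$, and the normalisation (the Hessian of $H_m$ at $q=0$ being $\int_\T(\partial_x^m q)^2\,\dx$) forces this constant to be $\tfrac12$. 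Finally, any monomial of $\chi_{2m+3}$ of degree $\ge3$ in $q$ has, by homogeneity, total derivative count $D\le2(m-1)$ spread over $N\ge3$ factors; integrating by parts repeatedly to shift derivatives off the factor of highest order, one brings it, modulo total derivatives, to a form in which no factor has order exceeding $\lfloor D/2\rfloor\le m-1$. Collecting the three contributions yields $H_m = \tfrac12\int_\T\bigl[(\partial_x^m q)^2 + p_m(q,\partial_x q,\dots,\partial_x^{m-1}q)\bigr]\,\dx$ with $p_m$ homogeneous of degree $m+2$ and without constant term.

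The main obstacle is the last step: the normal-form reduction of the cubic-and-higher part to derivative orders $\le m-1$. One must verify that the integration-by-parts procedure terminates with every order bounded by $\lfloor D/2\rfloor$ while keeping track of the lower-order monomials it generates — these have the same degree and still involve $\ge3$ factors, so an induction on the maximal order of a factor closes the argument. This is precisely the bookkeeping carried out in \cite[Theorem~1]{McKean:1975fz}, which one may simply invoke; the remaining ingredients — the Riccati recursion, degree-homogeneity, the vanishing of the linear term, and the identification of the quadratic part — are elementary.
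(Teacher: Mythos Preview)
The paper does not actually prove this lemma: it is stated as a recalled fact with the citation to \cite[Theorem~1]{McKean:1975fz}, and no proof environment follows. Your proposal correctly reconstructs that argument via the Riccati recursion (which, up to normalisation, is the recursion \eqref{sm} for the $s_n$ in the paper's Appendix~\ref{a:dl-exp}), and your sketch of the degree grading, the vanishing of the linear contribution, and the identification of the quadratic leading term is sound. Note that $D = 2(m+2-N)$ is always even here, so the target bound $\lfloor D/2\rfloor = m+2-N \le m-1$ for the integration-by-parts normal form is the natural one; your inductive reduction on the maximal order works because the case $a_1 = a_2$ with $a_1 > D/2$ cannot occur (it would force $\sum a_i > D$), leaving only $a_1 > a_2$, where either $a_1 \ge a_2 + 2$ gives a strict decrease or $a_1 = a_2 + 1$ is handled by the symmetrisation trick you implicitly invoke.

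One small point: fixing the leading coefficient $\tfrac12$ by appealing to ``the Hessian of $H_m$ at $q=0$'' is circular as stated, since that Hessian is precisely what you are computing. It is cleaner to track the quadratic coefficient directly through the recursion, or---since the paper defines $H_m = \tfrac{(-1)^{m+1}}{2}\int_\T s_{2m+3}\,\dx$ with the explicit recursion \eqref{sm}---to simply compute it from there; either route is elementary and gives $\tfrac12$. With that adjustment your argument is complete and strictly more informative than the paper's own treatment.
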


Consequently, each monomial $\mathfrak{p}$ of $p_{m}$ may be estimated by
\[
  \abs{\mathfrak{p}}
   \le c_{\mathfrak{p}} \abs{q}^{\mu_{0}}\abs{q_{x}}^{\mu_{1}}\dotsm\abs{q_{(m-1)}}^{\mu_{m-1}},
\]
with some positive constant $c_{\mathfrak{p}}$ and integers $\mu_{0},\dotsc,\mu_{m-1}$, where we denote $q_{(m)}\defl \partial_{x}^{m}q$ to simplify notation. It turns out to be convenient to use exponents which are multiples of two, that is
\[
  \abs{\mathfrak{p}} \le c_{\mathfrak{p}} \abs{q}^{2\sg_{0}}\abs{q_{x}}^{2\sg_{1}}
  \dotsm\abs{q_{(m-1)}}^{2\sg_{m-1}},
\]
with $\sg_{i}\in \Z_{\ge 0}/2$. Since $\mathfrak{p}$ is homogenous of degree $m+2$,
\begin{align}
  \label{p-deg}
  \sum_{0\le i\le m-1} (2+i)\sg_{i} = m+2.
\end{align}
Thus, we obtain the estimate
\begin{align}
 \abs{p_{m}} \le P_{m},\qquad
  \label{p-est}
  P_{m} \defl
   \sum_{\sg\in\Ic_{m}} c_{\sg} \abs{q}^{2\sg_{0}}\abs{q_{x}}^{2\sg_{1}}
   \dotsm\abs{q_{(m-1)}}^{2\sg_{m-1}},
\end{align}
with positive reals $c_{\sg}$ and $\Ic_{m}\subset (\Z_{\ge0}/2)^{m}$ being the set of all multi-indices satisfying the constraint \eqref{p-deg}.
The majorant $P_{m}$ allows us to obtain detailed estimates of $p_{m}$ and therefore $\H_{m}$.

\begin{proof}[Proof of Theorem~\ref{act-sob-est} (i) for $m\ge 2$.]
For $\sg\in\Ic_{m}$ we have by \eqref{p-deg}
\[
  m+2 \ge (m+1)\sg_{m-1},
\]
hence $P_{m}$ is at most quadratic in $q_{(m-1)}$. Together with the $L^{1}$-estimate $\n{q_{(m-1)}}_{L^1} \le \n{q_{(m-1)}}_0$ we obtain for the derivative of highest order
\[
  \int_\T \abs{q_{(m-1)}^{2\sg_{m-1}}}\,\dx
   \le \n{q_{(m-1)}}_0^{2\sg_{m-1}}.
\]
Estimating the remaining factors using the $L^{\infty}$-estimate
\[
  \abs{q_{(j)}^{2\sg_{j}}}
   \le
  \n{q_{(j)}}_{L^\infty}^{2\sg_{j}}
   \le
  \n{q}_{j+1}^{2\sg_{j}},
\]
then gives
\begin{align*}
  \int_\T \abs{p_{m}} \dx \le \int_\T P_{m} \dx
   \le \sum_{\Ic_{m}} c_{\sg} \n{q}_{m-1}^{2\sg_0 + \dotsb + 2\sg_{m-1}}.
\end{align*}
Since $2\sg_0 + \dotsb + 2\sg_{m-1} \le m+2$, we obtain for the Hamiltonian
\[
  \abs{\H_{m}(q)}
   \le
  \frac{1}{2}\n{q}_{m}^{2} +  b_m^2(1+\n{q}_{m-1}^{m})\n{q}_{m-1}^{2},
\]
with some absolute constant $b_m$.

Recall from Lemma~\ref{iqj-est} that
\[
  \n{I(q)}_{\ell_{2m+1}^{1}}
  \le
  8^m\sum_{n\ge 1} (2n\pi)J_{n,m}
  +
  (8\pi)^{2m}\n{q}_0^{2m+2},
\]
and combining the trace formula \eqref{tf-m} with the preceding estimate of the Hamiltonians yields
\begin{align*}
  4^{m}\abs[\Big]{\sum_{n\ge 1} (2n\pi)J_{n,m}}
   &\le
  \abs{H_m} + 2\sum_{k=0}^{m-2}\abs{H_{m-2-k}H_k}\\
   &\le
  \frac{1}{2}\n{q}_{m}^{2}
  +
  c_m(1+\n{q}_{m-1})^m\n{q}_{m-1}^2.
\end{align*}
This proves estimate (i) of Theorem~\ref{act-sob-est}.\qed
\end{proof}

\section{Estimating the Sobolev norms}
\label{s:sob-est}

We now turn to the problem of controlling the Sobolev norm $\n{q}_{m}$ of the potential
in terms of weighted norms of its actions. As a starting point we use the identity
\begin{align}
  \label{q-H}
  \frac{1}{2}\n{\partial_{x}^{m}q}^2
   =
   \H_m - \frac{1}{2}\int_\T  p_m(q,\ldots,\partial_{x}^{m-1}q)\,\dx,
\end{align}
inferred from Lemma~\ref{h-form}, and proceed in two steps. First, we estimate the sum $\sum_{n\ge 1} (2n\pi)J_{n,m}$ of the actions on level $m$ in terms of $\n{I(q)}_{\ell^{1}_{2m+1}}$ and a remainder depending solely on $\n{q}_0^{2} = 2\n{I(q)}_{\ell^{1}_{1}}$. From the trace formula we then obtain a bound of $\H_m$ in terms of the weighted action norms. In the second step we use the polynomial structure of $p_m$ to obtain an estimate of the $p_m$-integral involving at most $\n{q_{(m-1)}}^{2}$ but not $\n{q_{(m)}}^{2}$. Here, we again use the notation $q_{(m)} = \partial_{x}^{m}q$. An inductive argument then gives the claim.

\begin{lem}
\label{J-I-est}
On $\Hs_{0}^{m}$ for any $m\ge 1$,
\[
  \sum_{n\ge 1} (2n\pi) \abs{J_{n,m}}
   \le
   2^{-m}\n{I(q)}_{\ell^{1}_{2m+1}}
        + 2^{9m}\n{I(q)}_{\ell_{1}^{1}}^{m+1}.\fish
\]
\end{lem}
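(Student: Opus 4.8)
\textbf{Proof plan for Lemma~\ref{J-I-est}.}
The plan is to reuse the two-sided bound of Proposition~\ref{InJn} between $J_{n,m}$ and $(n\pi)^{2m}I_n$, splitting the index set at the threshold $n = 4\n{q}_0$ exactly as in the proof of Lemma~\ref{iqj-est}, but now going in the direction that bounds the actions on level $m$ by the weighted $\ell^1$-norm of the level-zero actions. For the high modes $n \ge 4\n{q}_0$, Proposition~\ref{InJn} gives $\abs{J_{n,m}} = J_{n,m} \le 2^m (n\pi)^{2m} I_n$, so that
\[
  \sum_{n \ge 4\n{q}_0} (2n\pi)\abs{J_{n,m}}
   \le 2^m \sum_{n \ge 4\n{q}_0} (2n\pi)^{2m+1} I_n
   \le 2^m \cdot 2^{-2m} \n{I(q)}_{\ell^1_{2m+1}}
   = 2^{-m} \n{I(q)}_{\ell^1_{2m+1}},
\]
using that $(2n\pi)^{2m+1} \cdot 2^{-2m} = (2n\pi)(n\pi)^{2m}$ — this is precisely where the factor $2^{-m}$ in the statement comes from, so one must be careful to keep the constant at exactly $2^m$ and absorb no extra slack here.

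For the low modes $1 \le n < 4\n{q}_0$, the second bound of Proposition~\ref{InJn} gives $\abs{J_{n,m}} \le (256)^m \n{q}_0^{2m} I_n$ (the lower bound $-5^m\n{q}_0^{2m}I_n$ shows $\abs{J_{n,m}}$ is controlled by the same quantity up to the larger constant). Hence
\[
  \sum_{1\le n < 4\n{q}_0} (2n\pi)\abs{J_{n,m}}
   \le (256)^m \n{q}_0^{2m} \sum_{1\le n < 4\n{q}_0} (2n\pi) I_n
   \le (256)^m \n{q}_0^{2m} \cdot 2\n{I(q)}_{\ell^1_1}.
\]
Now substitute $\n{q}_0^2 = 2\n{I(q)}_{\ell^1_1}$, so $\n{q}_0^{2m} = (2\n{I(q)}_{\ell^1_1})^m$, giving a bound of the form $2 \cdot (256)^m \cdot 2^m \n{I(q)}_{\ell^1_1}^{m+1} = (512)^m \cdot 2 \,\n{I(q)}_{\ell^1_1}^{m+1}$. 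Since $512 = 2^9$, this is $\le 2^{9m}\n{I(q)}_{\ell^1_1}^{m+1}$ for $m \ge 1$ (the leftover factor of $2$ is absorbed because $2^{9m} \ge 2 \cdot (512)^m$ fails by a hair, so in fact one checks $2\cdot 2^{9m} = 2^{9m+1} \le 2^{9m}\cdot 2$... ) — here one simply tracks constants and picks $2^{9m}$ generously enough; the only real content is the two-regime split.

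\textbf{Main obstacle.} There is no deep obstacle: the entire argument is a bookkeeping exercise on top of Proposition~\ref{InJn}, the identity $2\n{I(q)}_{\ell^1_1} = \n{q}_0^2$, and the elementary inequality $(2n\pi)^{2m+1} = 2^{2m}(2n\pi)(n\pi)^{2m}$. The one point requiring mild care is matching the claimed constants $2^{-m}$ and $2^{9m}$ exactly: the high-mode estimate must not lose any factor beyond the $2^m$ of Proposition~\ref{InJn}, and in the low-mode estimate the powers $(256)^m$, the factor $2$ from $\sum (2n\pi)I_n = \n{q}_0^2 = 2\n{I(q)}_{\ell^1_1}$, and the $(2\n{I(q)}_{\ell^1_1})^m$ coming from $\n{q}_0^{2m}$ must be collected and bounded by $2^{9m}\n{I(q)}_{\ell^1_1}^{m+1}$, which holds comfortably since $2 \cdot 256^m \cdot 2^m = 2^{8m+1} \le 2^{9m}$ for all $m \ge 1$.
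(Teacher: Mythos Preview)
Your approach is identical to the paper's: split at $n = 4\n{q}_0$, apply Proposition~\ref{InJn} on each piece, and convert $\n{q}_0^{2m}$ via $\n{q}_0^2 = 2\n{I(q)}_{\ell^1_1}$. The only slip is in the low-mode bookkeeping: $\sum_{1\le n < 4\n{q}_0}(2n\pi)I_n \le \n{I(q)}_{\ell^1_1}$ (no extra factor $2$, it is a partial sum of the defining series), and with that the constant comes out as $256^m\cdot 2^m = 2^{9m}$ on the nose --- your computation $2\cdot 256^m\cdot 2^m = 2^{8m+1}$ is off (it equals $2^{9m+1}$), but dropping the superfluous $2$ fixes everything.
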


\begin{proof}
By Proposition~\ref{InJn}, for $n\ge 4\n{q}_0$,
\[
  \abs{J_{n,m}} \le 2^{m}(n\pi)^{2m}I_n,
\]
while for $1\le n < 4\n{q}_0$,
\[
  \abs{J_{n,m}} \le (256)^{m}\n{q}_{0}^{2m} I_{n}.
\]
Since $\n{q}_{0}^{2} = 2\n{I(q)}_{\ell^{1}_{1}}$ by the trace formula \eqref{tf-0}, the claim follows.\qed
\end{proof}

\begin{thm}
\label{h-i-est}
For any $m\ge 1$ there exists an absolute constant $c_{m}$ such that
\[
  \abs{H_m(q)}
   \le 2^{m}\n{I(q)}_{\ell^{1}_{2m+1}}
        + c_m(1+\n{I(q)}_{\ell^{1}_{2m-1}})^{m}\n{I(q)}_{\ell^{1}_{2m-1}},
\]
for all $q\in \Hs_{0}^{m}$.\qed
\end{thm}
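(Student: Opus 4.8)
The plan is to combine the trace formula of Lemma~\ref{tf} with the estimate of the level-$m$ actions in terms of the action norms provided by Lemma~\ref{J-I-est}, and then control the bilinear remainder terms $H_{m-2-k}H_k$ by induction on $m$. Starting from \eqref{tf-m},
\[
  \frac{1}{4^m}H_m = \sum_{n\ge 1}(2n\pi)J_{n,m}
    + \frac{2}{4^m}\sum_{0\le k\le m-2} H_{m-2-k}H_k,
\]
so that
\[
  \abs{H_m} \le 4^m\sum_{n\ge 1}(2n\pi)\abs{J_{n,m}}
    + 2\sum_{0\le k\le m-2}\abs{H_{m-2-k}}\,\abs{H_k}.
\]
The first term is handled directly by Lemma~\ref{J-I-est}, which gives $4^m\sum_n (2n\pi)\abs{J_{n,m}} \le 2^m\n{I(q)}_{\ell^1_{2m+1}} + 2^{9m}4^m\n{I(q)}_{\ell^1_1}^{m+1}$; since $\n{I(q)}_{\ell^1_1}\le \n{I(q)}_{\ell^1_{2m-1}}$ for $m\ge 1$, the second summand is already of the desired form $(1+\n{I(q)}_{\ell^1_{2m-1}})^{m}\n{I(q)}_{\ell^1_{2m-1}}$ after enlarging the constant.

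For the bilinear sum, the key observation is that each factor $H_j$ with $0\le j\le m-2$ involves only $\n{q}_j\le \n{q}_{m-2}$, so I would set up a downward induction: I may assume the claimed bound holds for all levels $j<m$, which after combining with part (ii) of Theorem~\ref{act-sob-est} at lower levels (itself proved inductively in the same section) yields a bound $\abs{H_j}\le e_j(1+\n{I(q)}_{\ell^1_{2j+1}})^{?}\n{I(q)}_{\ell^1_{2j+1}}$, in particular $\abs{H_j}\le e_j(1+\n{I(q)}_{\ell^1_{2m-3}})^{j}\n{I(q)}_{\ell^1_{2m-3}}$ for $j\le m-2$. Hence $\abs{H_{m-2-k}H_k}$ is bounded by a constant times $(1+\n{I(q)}_{\ell^1_{2m-3}})^{m-2}\n{I(q)}_{\ell^1_{2m-3}}^2$. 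Using $\n{I(q)}_{\ell^1_{2m-3}}\le \n{I(q)}_{\ell^1_{2m-1}}$ and absorbing the extra factor $\n{I(q)}_{\ell^1_{2m-1}}$ into $(1+\n{I(q)}_{\ell^1_{2m-1}})$, each bilinear term is $\le c(1+\n{I(q)}_{\ell^1_{2m-1}})^{m-1}\n{I(q)}_{\ell^1_{2m-1}}$, and summing over the $O(m)$ values of $k$ only changes the constant. Combined with the linear term this gives precisely the asserted inequality $\abs{H_m}\le 2^m\n{I(q)}_{\ell^1_{2m+1}} + c_m(1+\n{I(q)}_{\ell^1_{2m-1}})^m\n{I(q)}_{\ell^1_{2m-1}}$.

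The main subtlety is bookkeeping the induction: the bound on $\abs{H_j}$ that feeds back into the bilinear estimate must itself be controlled in terms of action norms, and this requires interleaving with Theorem~\ref{act-sob-est}(ii) (which converts $\n{I(q)}_{\ell^1_{2j+1}}$-bounds into Sobolev bounds and back). I would therefore either cite Theorem~\ref{act-sob-est}(ii) for levels $j\le m-1$ as already established, or — cleaner — phrase the induction so that the hypothesis is exactly the statement of the present theorem at lower levels, and derive the needed polynomial-in-action bound on $\abs{H_j}$ from it together with the already-proven Lemma~\ref{J-I-est} and the inductive form of the trace formula. The one point requiring care is that the exponent of the remainder stays linear in $m$ (not, say, growing like $m!$ as in Korotyaev's estimate); this is guaranteed because at each induction step the remainder exponent increases by at most a bounded amount and the bilinear terms only see level $m-2$, so the recursion for the exponent is additive rather than multiplicative. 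No oscillatory-integral or spectral input is needed here beyond what Lemma~\ref{tf} and Proposition~\ref{InJn} already supply.
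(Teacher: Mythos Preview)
Your approach is essentially the paper's: rearrange the trace formula, bound the level-$m$ action sum via Lemma~\ref{J-I-est}, and induct on $m$ for the bilinear remainder. The paper does precisely your ``cleaner'' alternative --- it takes as induction hypothesis the statement of the present theorem at levels $j<m$ (together with the base case $H_{0}=\n{I(q)}_{\ell^{1}_{1}}$), obtaining $\abs{H_{j}}\lew (1+\n{I}_{\ell^{1}_{2j+1}})^{j}\n{I}_{\ell^{1}_{2j+1}}$ for $0\le j\le m-2$, and hence $\abs{H_{m-2-k}H_{k}}\lew (1+\n{I}_{\ell^{1}_{2m-5}})^{m-2}\n{I}_{\ell^{1}_{2m-3}}^{2}$, which is then absorbed as you describe.

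One genuine warning: do \emph{not} follow your first option of invoking Theorem~\ref{act-sob-est}(ii) at lower levels. That result is proved \emph{after} Theorem~\ref{h-i-est} in the paper and uses it as an ingredient, so citing it here would be circular. The self-contained induction on the theorem's own statement is both correct and what the paper does. (Also, minor: the induction is upward in $m$, not downward.)
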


\begin{proof}
We have $H_{0} = \n{I(q)}_{\ell_{1}^{1}}$ by \eqref{tf-0}, and further by \eqref{tf-m} for any $m\ge 1$
\[
  \abs{\H_m}
   \le
  4^m\sum_{n\ge 1}(2n\pi)\abs{J_{n,m}} + 2\sum_{k=0}^{m-2}\abs{H_{m-2-k}H_k}.
\]
Lemma~\ref{J-I-est} gives an estimate of the first summand, while we inductively obtain for the second
\begin{align*}
  \sum_{k=0}^{m-2} \abs{H_{m-2-k}H_k}
   &\le
  \sum_{k=0}^{m-2}
  c_{m-2-k}c_{k}(1+\n{I}_{\ell^{1}_{2m-5}})^{m-2}\n{I}_{\ell^{1}_{2m-3}}^{2}\\
   &\le
   \tilde c_{m}(1+\n{I}_{\ell^{1}_{2m-5}})^{m-1}\n{I}_{\ell^{1}_{2m-3}}.
\end{align*}
This proves the claim.\qed
\end{proof}

\begin{proof}[Proof of Theorem~\ref{act-sob-est} (ii).] We begin with the case $m=1$. Clearly,
\[
  \frac{1}{2}\n{q_x}_0^2 = \H_{1}(q) - \int q^{3}\,\dx.
\]
With the $L^{\infty}$-estimate $\n{q}_{L^{\infty}} \le \n{q_{x}}_{0}$ on $\Hs_{0}^{1}$ we find
\[
  \abs*{\int_\T  q^3\,\dx} \le \n{q_{x}}_{0}\n{q}_{0}^{2}
  						  \le \frac{1}{4}\n{q_{x}}_{0}^{2} + \n{q}_{0}^{4},
\]
and, together with $\n{q}_{0}^{2} = 2\n{I(q)}_{\ell_{1}^{1}}$, we therefore obtain
\[
  \frac{1}{4}\n{q_{x}}_0^2
   \le
  H_1(q) + 4\n{I(q)}_{\ell^{1}_{1}}^2.
\]
Finally, by the preceding Theorem,
\[
  \H_{1}(q) \le 2\n{I(q)}_{\ell^{1}_{3}} + c_{1}(1+\n{I(q)}_{\ell^{1}_{1}})\n{I(q)}_{\ell^{1}_{1}},
\]
such that
\[
  \n{q_{x}}_{0}^{2} \le 8\n{I(q)}_{\ell^{1}_{3}}
   + \tilde c_{1}(1+\n{I(q)}_{\ell^{1}_{1}})\n{I(q)}_{\ell^{1}_{1}}.
\]

To proceed with the general case $m\ge 2$, we recall from \eqref{q-H} that
\[
  \frac{1}{2}\n{q_{(m)}}^2
   =
   \H_m - \frac{1}{2}\int_\T  p_m(q,\ldots,q_{(m-1)})\,\dx.
\]
Theorem~\ref{h-i-est} provides an estimate of $H_{m}$, while for $p_{m}$ we have by~\eqref{p-est}
\[
  \abs{p_{m}} \le P_{m}
   = \sum_{\sg\in\Ic_{m}} c_{\sg}
     	\abs{q}^{2\sg_{0}}\abs{q_{x}}^{2\sg_{1}}\dotsm\abs{q_{(m-1)}}^{2\sg_{m-1}},
\]
where the majorant $P_{m}$ only contains derivatives of $q$ up to order $m-1$ and the multi-indices $\sg$ satisfy the constraint \eqref{p-deg}. We use this to prove by induction for any $m\ge 2$
\begin{align*}
  \frac{1}{2}\n{q_{(m)}}_0^2
   \le \n{I}_{\ell^{1}_{2m+1}} + d_m^2(1+\n{I}_{\ell^{1}_{2m-1}})^{m}\n{I}_{\ell^{1}_{2m-1}}.
\end{align*}
Clearly, this holds true for $m=0,1$.

Consider the inductive step $m-1\mapsto m$. Recall from the proof of Theorem~\ref{act-sob-est} (i) that $P_{m}$ is at most quadratic in $\abs{q_{(m-1)}}$ for $m\ge 2$. Thus, we have the following expansion
\begin{align*}
  P_{m} &= P_{m;2}(q,\ldots,q_{(m-2)})\abs{q_{(m-1)}}^2\\
  &\qquad + P_{m;1}(q,\ldots,q_{(m-2)})\abs{q_{(m-1)}}\\
  &\qquad + P_{m;0}(q,\ldots,q_{(m-2)}),
\end{align*}
where for $0\le k\le 2$ the entity $P_{m;k}\abs{q_{(m-1)}}^k$ incorporates those multi-indices $\sg\in\Ic_{m}$ with $2\sg_{m-1} = k$. In particular, $P_{m;2} = c\abs{q}$ with some $c\ge 0$. These three terms will be estimated separately.

We begin with $P_{m;2}\abs{q_{(m-1)}}^{2} = c\abs{q}\abs{q_{(m-1)}}^{2}$. To avoid a flood a constants, we write $a\lew b$ if $a\le c\cdot b$ with an absolute constant $c$ which is independent of $q$ but may depend on $m$. Using the $L^{\infty}$-estimate and the induction hypothesis for $m-1$, we get
\begin{align*}
  \int_\T P_{m;2}\abs{q_{(m-1)}}^2\,\dx
   &\lew
  \n{q}_{L^{\infty}}\n{q_{(m-1)}}_{0}^2\\
   &\lew
  \n{q_{x}}_0\n{q_{(m-1)}}_{0}^2\\
   &\lew
  (1+\n{I}_{\ell^{1}_{2m-1}})^{m}\n{I}_{\ell^{1}_{2m-1}}.
\end{align*}

Second, consider the term $P_{m;1}\abs{q_{(m-1)}}$ incorporating all multi-indices with $2\sg_{m-1} = 1$. Since $\sum_{0\le i\le m-1} (2+i)\sg_i = m+2$, there has to be a nonzero $\sg_{j}$ for some $0\le j\le m-2$.
By Cauchy-Schwarz and the $L^{\infty}$-estimate
\begin{align*}
  \int_\T \abs{q_{(j)}}^{2\sg_j}\abs{q_{(m-1)}}\,\dx
   &\lew
  \n{q_{(j+1)}}_{0}^{2\sg_j-1}\n{q_{(j)}}_0\n{q_{(m-1)}}_0\\
   &\lew
  (1+\n{I}_{\ell^{1}_{2m-1}})^{(j+1)(\sg_{j}-1/2) + j/2 + (m-1)/2}
  \n{I}_{\ell^{1}_{2m-1}}^{\sg_j + 1/2}\\
   &\lew
  (1+\n{I}_{\ell^{1}_{2m-1}})^{(j+1)\sg_{j} + m\sg_{m-1} -1}
  \n{I}_{\ell^{1}_{2m-1}}^{\sg_j + \sg_{m-1}}.
\end{align*}
For the remaining factors the $L^{\infty}$-estimate gives
\[
  \abs{q_{(k)}}^{2\sg_k}
   \le
  \n{q_{(k+1)}}_{0}^{2\sg_k}
   \lew
  (1+\n{I}_{\ell^{1}_{2m-2}})^{(k+1)\sg_k}\n{I}_{\ell^{1}_{2m-2}}^{\sg_k}.
\]
Both estimates together yield
\begin{align*}
  &\int_\T \abs{q}^{2\sg_{0}}\dotsm\abs{q_{(m-1)}}^{2\sg_{m-1}}\,\dx\\
  &\qquad\lew
  (1+\n{I}_{\ell^{1}_{2m-1}})^{\sg_{0} + 2\sg_{1} + \dotsb + m\sg_{m-1} - 1} \n{I}_{\ell^{1}_{2m-2}}^{\sg_0 + \dotsb +
  \sg_{m-1}}\\
  &\qquad\lew
  (1+\n{I}_{\ell^{1}_{2m-1}})^{(2+0)\sg_0 + \dotsb +
  (2+(m-1))\sg_{m-1} - 2}
  \n{I}_{\ell^{1}_{2m-2}}.
\end{align*}
As $(2+0)\sg_0 + \dotsb + (2+(m-1))\sg_{m-1} - 2 = m$ by \eqref{p-deg}, we conclude
\[
  \int_\T P_{m;1}\abs{q_{(m-1)}}\,\dx
   \lew (1+\n{I}_{\ell^{1}_{2m-2}})^{m}\n{I}_{\ell^{1}_{2m-2}}.
\]

It remains to estimate the term $P_{m;0}$ which incorporates those $\sg$ with $\sg_{m-1} = 0$. First consider the case that $\sg_{i},\sg_{j}\neq 0$ for some $0\le i < j\le m-2$. By Cauchy-Schwarz,
\begin{align*}
  \int_\T \abs{q_{(i)}}^{2\sg_i}\abs{q_{(j)}}^{2\sg_j}\,\dx
   &\lew
  \n{q_{(i+1)}}_{0}^{2\sg_i-1}\n{q_{(j+1)}}_{0}^{2\sg_j-1}\n{q_{(i)}}_0\n{q_{(j)}}_0\\
   &\lew
  (1+\n{I}_{\ell^{1}_{2m-1}})^{(i+1)\sg_{i} + (j+1)\sg_{j}-1}
  \n{I}_{\ell^{1}_{2m-1}}^{\sg_i + \sg_j},
\end{align*}
while the remaining factors are estimated as usual,
\[
  \abs{q_{(k)}}^{2\sg_k}
   \lew
  \n{q_{(k+1)}}_{0}^{2\sg_k}
   \lew
  (1+\n{I}_{\ell^{1}_{2m-2}})^{(k+1)\sg_k}\n{I}_{\ell^{1}_{2m-2}}^{\sg_k},
\]
such that
\begin{align*}
  &\int_\T \abs{q}^{2\sg_{0}}\dotsm\abs{q_{(m-2)}}^{2\sg_{m-2}}\,\dx \\
   &\qquad \lew
   (1+\n{I}_{\ell^{1}_{2m-1}})^{\sg_0 + 2\sg_1 + \dotsb + (m-1)\sg_{m-2}-1}
      \n{I}_{\ell^{1}_{2m-1}}^{\sg_0 + \dotsb + \sg_{m-2}}\\
   &\qquad\lew
  (1+\n{I}_{\ell^{1}_{2m-1}})^{(2+0)\sg_0 + \dotsb + (2+(m-2))\sg_{m-2} - 2}
     \n{I}_{\ell^{1}_{2m-1}}.
\end{align*}
As before $(2+0)\sg_0 + \dotsb + (2+(m-2))\sg_{m-2} - 2 = m$ by~\eqref{p-deg}. Finally, consider the case where all $\sg_{j}$ except for $\sg_{i}$ with $0\le i\le m-2$ vanish. Then $(2+i)\sg_{i} = m+2$, and consequently
\begin{align*}
  \int_\T \abs{q_{(i)}}^{2\sg_i}\,\dx
   &\lew
  \n{q_{(i+1)}}_{0}^{2\sg_i-2}\n{q_{(i)}}_0^{2}\\
   &\lew
  (1+\n{I}_{\ell^{1}_{2m-1}})^{(i+1)(\sg_i-1)+i}\n{I}_{\ell^{1}_{\ell^{1}_{2m-1}}}^{\sg_i}\\
   &\lew
  (1+\n{I}_{\ell^{1}_{2m-1}})^{m}\n{I}_{\ell^{1}_{\ell^{1}_{2m-1}}}.
\end{align*}

Altogether we thus obtain
\[
  \int_\T P_{m}\,\dx
   \lew
  (1+\n{I}_{\ell^{1}_{2m-1}})^{m}\n{I}_{\ell^{1}_{2m-1}},
\]
which completes the induction step and proves Theorem~\ref{act-sob-est} (ii).\qed
\end{proof}

\section{Estimating the Actions in Weighted Sobolev Spaces}
\label{s:act-west}

The case of estimating the actions in arbitrary weighted Sobolev spaces $\Hs_{0}^{w}$ differs significantly from the case of integer Sobolev spaces $\Hs_{0}^{m}$ since for an arbitrary weight $w$ there is no identity known to exist relating $\n{q}_{w}$ to Hamiltonians of the \kdv hierarchy. Albeit, even in the case of weighted Sobolev spaces, the regularity properties of $q$ are well known to be closely related to the decay properties of the  gap lengths $\gm_{n}(q)$ -- see e.g. \cite{Poschel:2011iua,Djakov:2006ba,Kappeler:1999er} and Appendix~\ref{s:appendix}. Moreover, the asymptotic relation
\begin{align}
  \label{In-gmn}
  \frac{8n\pi I_{n}}{\gm_{n}^{2}} = 1 + O\left(\frac{\log n}{n}\right)
\end{align}
is known to hold locally uniformly on $\Hs_{0}^{0}$ -- see \cite{Kappeler:2003up}. In this section we obtain a quantitative version of \eqref{In-gmn} which is uniform in $\n{q}_{0}$ on all of $\Hs_{0}^{0}$. This together with the estimates of the gap lengths given in the appendix allows us to prove Theorem~\ref{act-west}.

For $q\in \Hs_{0}^{0}$ we recall from \eqref{Jn-om} that
\[
  I_{n} = \frac{1}{\pi}\int_{\Gm_{n}} \lm \om = -\frac{1}{\pi}\int_{\Gm_{n}} (\lm_{n}^{\ld}-\lm)\om.
\]
Here the latter identity follows from the closedness of $\om$ around the gap. In the case $I_{n}\neq 0$, or equivalently $\gm_{n}\neq 0$, we shrink the contour $\Gm_{n}$ to the straight line $[\lm_{n}^{-},\lm_{n}^{+}]$ and insert the product representation \eqref{om} of $\om$, to obtain
\[
  I_{n} = \frac{1}{\pi}\int_{\lm_{n}^{-}}^{\lm_{n}^{+}}
  \frac{(\lm_{n}^{\ld}-\lm)^{2}\,\chi_{n}(\lm)}{\sqrt[+]{\gm_{n}^{2}/4-(\tau_{n}-\lm)^{2}}}\,\dlm,
  \qquad
  \chi_{n}(\lm) = \frac{1}{\sqrt[+]{\lm-\lm_{0}^{+}}}
        \prod_{m\neq n} \frac{\lm_{m}^{\ld}-\lm}{\vs_{m}(\lm)}.
\]
Parametrizing the gap $G_{n} = [\lm_{n}^{-},\lm_{n}^{+}]$ by $\lm_{t} = \tau_{n} + t\gm_{n}/2$ gives
\[
  \frac{8n\pi I_{n}}{\gm_{n}^{2}} = \frac{2}{\pi}\int_{-1}^{1}
   \frac{(t-t_{n})^{2}}{\sqrt[+]{1-t^{2}}}(n\pi)\chi_{n}(\tau_{n}+t\gm_{n}/2) \,\dt,
\]
where we set $t_{n} = 2(\lm_{n}^{\ld}-\tau_{n})/\gm_{n}$. Since $\abs{\tau_{n}-\lm_{n}^{\ld}} \le \gm_{n}/2$ we conclude $\abs{t_{n}}\le 1$, and hence
\begin{align}
  \label{In-gmn-2}
  \frac{8n\pi I_{n}}{\gm_{n}^{2}} \le 3(n\pi)\max_{\lm\in G_{n}}\,\abs{\chi_{n}(\lm)}.
\end{align}
The following uniform estimate of $(n\pi)\abs{\chi_{n}}_{G_n}$ allows us to proof the desired quantitative version of~\eqref{In-gmn} in the sequel.

\begin{lem}
On $\Hs_{0}^{s}$ with $0\le s\le 1/2$ for any $n \ge 8\n{q}_{s}^{3/2-s}$,
\[
  (n\pi)\max_{\lm\in G_{n}}\,\abs{\chi_{n}(\lm)} \le 64(1+\n{q}_{s})^{3/2-s}.\fish
\]
\end{lem}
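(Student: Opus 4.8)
The quantity to estimate is the product
\[
  \chi_{n}(\lm) = \frac{1}{\sqrt[+]{\lm-\lm_{0}^{+}}}
        \prod_{m\neq n} \frac{\lm_{m}^{\ld}-\lm}{\vs_{m}(\lm)},
\]
evaluated (in modulus) on the gap $G_{n}$, multiplied by $n\pi$. The idea is to split this product into three regimes and bound each factor separately: (a) the prefactor $1/\sqrt[+]{\lm-\lm_{0}^{+}}$, where we use that $\lm_{0}^{+}\ge -\n{q}_{0}^{2}$ or a similar lower bound from Proposition~\ref{ev-as}, combined with $\lm\in G_{n}\approx n^{2}\pi^{2}$, so that $1/\sqrt{\lm-\lm_{0}^{+}}\lew 1/(n\pi)$ for $n$ large; this is precisely where the $n\pi$ in front gets absorbed; (b) the ``low'' indices $m$ with $m$ of the order of $\n{q}_{s}^{3/2-s}$ or smaller, where $\lm_{m}^{\ld}$ and $\vs_{m}(\lm)$ can be crude but must be controlled using the localization of the periodic eigenvalues from Proposition~\ref{ev-as} in the appendix; and (c) the ``high'' indices $m$ far from $n$, where $\abs{\lm_{m}^{\ld}-\lm}/\abs{\vs_{m}(\lm)}$ is close to $1$ and the product converges, with the deviation from $1$ controlled by $\sum_{m}\gm_{m}^{2}$-type bounds.

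The main work is in the high-index factor (c). For $m$ sufficiently large and away from $n$ one writes
\[
  \frac{\lm_{m}^{\ld}-\lm}{\vs_{m}(\lm)}
   = \frac{\lm_{m}^{\ld}-\lm}{(\tau_{m}-\lm)\sqrt[+]{1-\gm_{m}^{2}/4(\tau_{m}-\lm)^{2}}},
\]
and estimates $\abs{\lm_{m}^{\ld}-\tau_{m}}\le\gm_{m}/2$ together with $\abs{\tau_{m}-\lm}\gtrsim\abs{m^{2}-n^{2}}$ (valid because $\lm\in G_{n}\subset U_{n}$ and the $U_{m}$ are separated by property (b) of the isolating neighborhoods), so each factor equals $1 + O(\gm_{m}/\abs{m^{2}-n^{2}}) + O(\gm_{m}^{2}/\abs{m^{2}-n^{2}}^{2})$. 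Taking the product and the logarithm, one must bound $\sum_{m\neq n}\gm_{m}/\abs{m^{2}-n^{2}}$ and the analogous squared sum. By Cauchy--Schwarz, $\sum_{m}\gm_{m}/\abs{m^{2}-n^{2}}\le\bigl(\sum_{m}\gm_{m}^{2}\bigr)^{1/2}\bigl(\sum_{m\neq n}\abs{m^{2}-n^{2}}^{-2}\bigr)^{1/2}$; the second factor is $O(1/n)$ (or at worst $O((\log n)/n)$ if one is not careful with the term $m$ nearest to $n$), and $\sum_{m}\gm_{m}^{2}\lew\n{q}_{0}^{2}\lew\n{q}_{s}^{2}$ by the standard gap-length estimates recalled in the appendix — but to get the exponent $3/2-s$ rather than $1$ one must instead use the sharper weighted estimate $\sum_{m}(m\pi)^{2s}\gm_{m}^{2}\lew\n{q}_{s}^{2}$ and feed the weight $(m\pi)^{2s}$ into the denominators, trading $s$ powers of $m$ against the $\abs{m^{2}-n^{2}}^{-2}$ decay. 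This is the delicate bookkeeping step.

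For the low-index product (b), the issue is that $\gm_{m}$ need not be small and $\lm_{m}^{\ld}-\lm$ is not close to $\tau_{m}-\lm$; here one bounds $\abs{\lm_{m}^{\ld}-\lm}$ from above by something like $\abs{m^{2}-n^{2}}\pi^{2} + C\n{q}_{0}^{2}$ (using $\lm_{m}^{\ld}\in U_{m}$) and $\abs{\vs_{m}(\lm)}$ from below by $\abs{\tau_{m}-\lm} - \gm_{m}/2\gtrsim\dist(U_{m},U_{n})\gtrsim\abs{m^{2}-n^{2}}$, so each of the (at most $O(\n{q}_{s}^{3/2-s})$) factors contributes a bounded multiple of $1 + \n{q}_{0}^{2}/\abs{m^{2}-n^{2}}$; since $n\ge 8\n{q}_{s}^{3/2-s}$ forces $\abs{m^{2}-n^{2}}\gtrsim n\gtrsim\n{q}_{s}^{3/2-s}$ for all low $m$, each such factor is $1 + O(\n{q}_{s}^{1/2+s}/1)$-ish and the number of factors is $O(\n{q}_{s}^{3/2-s})$, which one must show still collapses to the stated bound $64(1+\n{q}_{s})^{3/2-s}$ — this requires taking logarithms and summing, using that $\sum_{m<n/2}\n{q}_{0}^{2}/\abs{m^{2}-n^{2}}\lew\n{q}_{0}^{2}(\log n)/n^{2}$, which is small. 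Assembling (a), (b), (c) and absorbing the various absolute constants into the final $64$ and the exponent $3/2-s$ completes the argument.
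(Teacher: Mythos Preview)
Your plan for parts (a) and (c) matches the paper's proof closely: the prefactor $1/\sqrt[+]{\lm-\lm_{0}^{+}}$ absorbs the $n\pi$, and the high-index product is controlled via Cauchy--Schwarz with the weighted gap estimate $\sum_{m\ge N}\lin{2m}^{2s}\gm_{m}^{2}\lew\n{q}_{s}^{2}+\n{q}_{s}^{4}/N$, feeding the weight $m^{-2s}$ into the $\abs{m^{2}-n^{2}}^{-2}$ sum to extract the factor $N^{-2-2s}$.

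The genuine gap is in part (b), the low-index product $\prod_{1\le m<N}$. Your proposed factor bound ``$1+C\n{q}_{0}^{2}/\abs{m^{2}-n^{2}}$'' is not correct: for indices $m$ in the medium range $4\n{q}_{0}\le m<N$ one only has $\gm_{m}\le 6\n{q}_{0}$, so the factor is $1+O(\n{q}_{0}/\abs{m^{2}-n^{2}})$, linear rather than quadratic in $\n{q}_{0}$. Moreover, your sum estimate $\sum_{m<n/2}\n{q}_{0}^{2}/\abs{m^{2}-n^{2}}\lew\n{q}_{0}^{2}(\log n)/n^{2}$ is off by a factor of $n$: since $\abs{m^{2}-n^{2}}\asymp n^{2}$ for $m<n/2$ and there are $\sim n$ terms, the sum is $\sim\n{q}_{0}^{2}/n$. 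With $n\ge 8\n{q}_{s}^{3/2-s}$ this gives at best $\n{q}_{s}^{1/2+s}$, which for $s$ near $1/2$ is $\sim\n{q}_{s}$ --- not small. Exponentiating, your approach yields a bound that is exponential in $\n{q}_{s}$, not the polynomial $(1+\n{q}_{s})^{3/2-s}$. Worse, the low-index range is $m<N$, and for $n=N$ the terms with $m$ close to $N-1$ have $\abs{m^{2}-n^{2}}\sim n$, not $n^{2}$, so the situation is even less favorable than your $m<n/2$ computation suggests.

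The paper's key idea, which you are missing, is a \emph{telescoping} argument: using the ordering $\lm_{m-1}^{\pm}\le\lm_{m}^{-}\le\lm_{m}^{\ld}$ one has $\abs{\lm_{m}^{\ld}-\lm}\le\abs{\vs_{m-1}(\lm)}$ for $\lm\in G_{n}$ with $n\ge N$, so after shifting indices
\[
  \prod_{1\le m<N}\abs*{\frac{\lm_{m}^{\ld}-\lm}{\vs_{m}(\lm)}}
  = \frac{\abs{\lm_{1}^{\ld}-\lm}}{\abs{\vs_{N-1}(\lm)}}\prod_{2\le m<N}\abs*{\frac{\lm_{m}^{\ld}-\lm}{\vs_{m-1}(\lm)}}
  \le \frac{\abs{\lm_{1}^{\ld}-\lm}}{\abs{\vs_{N-1}(\lm)}}
  \le 2N.
\]
This single ratio is what produces the factor $N\sim\n{q}_{s}^{3/2-s}$ and hence the exponent $3/2-s$ in the final bound.
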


\begin{proof}
For $n\neq m$ with $n,m \ge 4\n{q}_{0}$ and $\lm\in G_{n}$ we have by Lemma~\ref{Sn-roots}
\begin{align*}
  \abs{\tau_{m}-\lm} &\ge \abs{n^{2}\pi^{2}-m^{2}\pi^{2}} - \abs{\tau_{m}-m^{2}\pi^{2}}
   - \abs{\lm-n^{2}\pi^{2}}\\
                     &\ge \pi^{2}\abs{n^{2}-m^{2}} - 8\n{q}_{0}
                      \ge 8\abs{n^{2}-m^{2}}.
\end{align*}
Further, $\abs{\gm_{m}} \le 6\n{q}_{0}$ by Proposition~\ref{gap-est}, hence
\[
  \dl_{m} \defl \frac{\abs{\gm_{m}}/2}{\abs{\tau_{m}-\lm}}
   \le \frac{3\n{q}_{0}}{8\abs{n^{2}-m^{2}}} \le 1/8.
\]
Since $\lm_{m}^{-}\le \lm_{m}^{\ld}\le \lm_{m}^{+}$ we have $\abs{\lm_{m}^{\ld}-\tau_{m}} \le \gm_{m}/2$ and thus
\begin{align*}
  \abs*{\frac{\lm_{m}^{\ld}-\lm}{\vs_{m}(\lm)}}
   \le \frac{\abs{\tau_{m}-\lm} + \abs{\gm_{m}}/2}{\abs{\tau_{m}-\lm} - \abs{\gm_{m}}/2}
   &\le 1 + \frac{1}{1 - \dl_{m}}\frac{\abs{\gm_{m}}}{\abs{\tau_{m}-\lm}}\\
   &\le 1 + \frac{\abs{\gm_{m}}}{4\abs{n^{2}-m^{2}}}.
\end{align*}
Suppose $N \ge 4\n{q}_{s}$, then by Proposition~\ref{gap-est}
\[
  \sum_{m\ge N} \lin{2m}^{2s}\abs{\gm_{m}}^{2}
   \le 9\n{q}_{s}^{2} + \frac{576}{N}\n{q}_{s}^{4},
\]
while on the other hand for any $n \ge N$
\[
  \sum_{\atop{m\neq n}{m \ge N}} \frac{1}{\abs{n^{2}-m^{2}}^{2}}
    \le \frac{2}{(n+N)^{2}}\sum_{m\ge 1}\frac{1}{m^{2}}
    \le \frac{1}{N^2}.
\]
Therefore, by Cauchy-Schwarz,
\begin{align*}
  \abs[\bigg]{\sum_{\atop{m\neq n}{m \ge N}}
    \frac{\gm_{m}}{4\abs{n^{2}-m^{2}}}}^{2}
      &\le \frac{9}{16N^{2+2s}}\n{q}_{s}^{2}
         + \frac{36}{N^{3+2s}}\n{q}_{s}^{4}.
\end{align*}
Clearly $4/(3+2s) \le 3/2-s$ for $0\le s\le 1/2$. So if we increase $N$ such that $N-1\le \max(4\n{q}_{s}^{3/2-s},4\n{q}_{s}) \le N$, or more conveniently $N-1 \le 8\n{q}_{s}^{3/2-s} \le N$, then by the standard estimates for infinite products,
\[
  \prod_{\atop{m\neq n}{m \ge N}} \abs*{\frac{\lm_{m}^{\ld}-\lm}{\vs_{m}(\lm)}}
  \le \exp\left(\sum_{\atop{m\neq n}{m \ge N}}
    \frac{\gm_{m}}{4\abs{n^{2}-m^{2}}}\right)
  \le \exp\left(1/2\right) \le 2.
\]

To estimate the remaining part of the product we note that by the ordering of the eigenvalues
\[
  \abs{\lm_{m}^{\ld}-\lm} \le \abs{\lm_{m-1}^{\pm}-\lm} ,\qquad 2\le m< N,
\]
and consequently (if $N\ge 2$)
\[
  \prod_{1\le m <  N} \abs*{\frac{\lm_{m}^{\ld}-\lm}{\vs_{m}(\lm)}}
  =   \abs*{\frac{\lm_{1}^{\ld}-\lm}{\vs_{N-1}(\lm)}}
      \prod_{2\le m <  N} \abs*{\frac{\lm_{m}^{\ld}-\lm}{\vs_{m-1}(\lm)}}
  \le \frac{\abs{\lm_{1}^{\ld}-\lm}}{\abs{\vs_{N-1}(\lm)}}.
\]
By Proposition~\ref{ev-as} and Lemma~\ref{Sn-roots} we have for $n\ge N$ and $\lm\in G_{n}$,
\[
  \abs{\lm-\lm_{N-1}^{\pm}} \ge (n^{2}-N^{2})\pi^{2} - 4\n{q}_{0} + 12N
                            \ge 9(n^{2}-N^{2}) + 11N,
\]
while on the other hand by Proposition~\ref{ev-as}
\[
  \abs{\lm_{1}^{\ld}-\lm}
    \le \abs{\lm_{0}^{+}-\lm}
    \le n^{2}\pi^{2} + 4\n{q}_{0} + (1+\n{q}_{0})\n{q}_{0}
  \le 12n^{2}.
\]
Both estimates together yield (if $N\ge 2$)
\[
  \prod_{1\le m <  N} \abs*{\frac{\lm_{m}^{\ld}-\lm}{\vs_{m}(\lm)}}
  \le \frac{12n^{2}}{11N + 9(n^{2}-N^{2})} \le 2N.
\]

Finally, since $\lm_{0}^{+} \le [q] = 0$, we find
\[
  \sqrt[+]{\lm-\lm_{0}^{+}} \ge \sqrt[+]{n^{2}\pi^{2}-4\n{q}_{0}} \ge n\pi/2,
\]
and consequently for any $n\ge N$
\[
  (n\pi)\abs{\chi_{n}(\lm)} \le
  \frac{n\pi}{\sqrt[+]{\lm-\lm_{0}^{+}}}\prod_{m\neq n}
  \abs*{\frac{\lm_{m}^{\ld}-\lm}{\vs_{m}(\lm)}} \le 8N \le 64(1+\n{q}_{s})^{3/2-s}.\qed
\]
\end{proof}

\begin{cor}
On $\Hs_{0}^{s}$ with $0\le s\le 1/2$ for any $n > 8\n{q}_{s}^{3/2-s}$,
\[
  (2n\pi)I_{n} \le 48(1+\n{q}_{s})^{3/2-s}\gm_{n}^{2}.\fish
\]
\end{cor}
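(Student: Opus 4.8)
The plan is to combine the preceding lemma with the inequality~\eqref{In-gmn-2} directly. Indeed, recall that we derived
\[
  \frac{8n\pi I_{n}}{\gm_{n}^{2}} \le 3(n\pi)\max_{\lm\in G_{n}}\,\abs{\chi_{n}(\lm)}
\]
in the case $\gm_{n}\neq 0$, and that the preceding lemma provides, for $n\ge 8\n{q}_{s}^{3/2-s}$ on $\Hs_{0}^{s}$ with $0\le s\le 1/2$, the bound $(n\pi)\max_{\lm\in G_{n}}\abs{\chi_{n}(\lm)} \le 64(1+\n{q}_{s})^{3/2-s}$. So for $n > 8\n{q}_{s}^{3/2-s}$ with $\gm_{n}\neq 0$ we obtain
\[
  \frac{8n\pi I_{n}}{\gm_{n}^{2}} \le 3\cdot 64(1+\n{q}_{s})^{3/2-s} = 192(1+\n{q}_{s})^{3/2-s},
\]
which rearranges to $(2n\pi)I_{n}\le 48(1+\n{q}_{s})^{3/2-s}\gm_{n}^{2}$ after multiplying both sides by $\gm_{n}^{2}/4$. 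The one remaining point is to dispose of the degenerate case $\gm_{n} = 0$: then $\lm_{n}^{+}=\lm_{n}^{-}$ is a double eigenvalue, the gap collapses, the corresponding action $I_{n}$ vanishes, and the asserted inequality holds trivially as $0\le 0$. Thus the statement holds for all $n > 8\n{q}_{s}^{3/2-s}$ without exception.

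There is essentially no obstacle here; the corollary is a bookkeeping consequence of the lemma it immediately follows. The only mild care needed is the factor arithmetic ($3\cdot 64 = 192$, and $192/8 = 24$, giving $(8n\pi I_n)/\gm_n^2 \le 192(\cdots)$ hence $(2n\pi) I_n \le 48(\cdots)\gm_n^2$) and the observation that the estimate~\eqref{In-gmn-2} was established precisely under the hypothesis $\gm_n\neq 0$, so the collapsed-gap case must be mentioned separately — which is why the corollary is phrased with strict inequality $n > 8\n{q}_{s}^{3/2-s}$, matching the lemma's threshold. If one prefers, one can also note that $I_n = O(\gm_n^2)$ is automatic from the integral representation of $I_n$ over $\Gm_n$, so no continuity argument is required to pass to collapsed gaps.

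In short: invoke~\eqref{In-gmn-2}, insert the bound from the preceding lemma, simplify the numerical constants, and handle $\gm_n=0$ by the trivial vanishing of $I_n$. This proves the corollary.
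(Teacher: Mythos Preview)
Your proposal is correct and follows essentially the same approach as the paper's own proof: handle the collapsed-gap case $\gm_n = 0$ by the trivial vanishing of $I_n$, and otherwise combine the inequality~\eqref{In-gmn-2} with the preceding lemma's bound $(n\pi)\abs{\chi_n}_{G_n}\le 64(1+\n{q}_s)^{3/2-s}$ to obtain $(8n\pi)I_n/\gm_n^2 \le 192(1+\n{q}_s)^{3/2-s}$, which rearranges to the claim. (The stray remark ``$192/8=24$'' in your arithmetic check is not actually used; the relevant step is dividing by $4$, which you do correctly.)
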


\begin{proof}
If $\gm_{n}=0$, then $I_{n} = 0$ and the estimate clearly holds. If $\gm_{n}\neq 0$, then by \eqref{In-gmn-2} and the preceding lemma,
\[
  (8n\pi)I_{n}/\gm_{n}^{2} \le 3(n\pi)\abs{\chi_{n}}_{G_{n}}
  \le 192(1+\n{q}_{s})^{3/2-s}.\qed
\]
\end{proof}

\begin{proof}[Proof of Theorem~\ref{act-west}.]
Suppose $q\in \Hs_{0}^{w}$ with $w = \lin{n}^{s}v$ where $v\in \Ms$ and $0\le s\le 1/2$. Choose $N\ge 1$ such that $N\ge 8\n{q}_{w}^{3/2-s} > N-1$. Then by the preceding corollary
\[
  \sum_{n \ge N} (2n\pi)w_{2n}^{2} I_{n}
   \le 48(1+\n{q}_{s})^{3/2-s}\sum_{n \ge N} w_{2n}^{2}\gm_{n}^{2},
\]
and for the gap lengths we obtain with Proposition~\ref{gap-est}
\[
  \sum_{n \ge N} w_{2n}^{2}\gm_{n}^{2}
  \le 9\n{q}_{w}^{2} + 72\n{q}_{w}^{5/2+s}
  \le 72(1+\n{q}_{w}^{1/2+s})\n{q}_{w}^{2}.
\]
The remaining actions for $1\le n< N$ may be estimated by
\[
  \sum_{1\le n< N} w_{2n}^{2}(2n\pi) I_{n}
   \le w^{2}_{2N-2}\sum_{n\ge 1} (2n\pi)I_{n}
   \le \bigl(w(16\n{q}_{w}^{3/2-s})\bigr)^{2}\n{q}_{0}^{2}.
\]
Altogether, we thus find
\[
  \sum_{n\ge 1} w_{2n}^{2}(2n\pi)I_{n}
   \le \left(2^{12}(1+\n{q}_{w})^{2} + \bigl(w(16\n{q}_{w}^{3/2-s})\bigr)^{2}\right)\n{q}_{w}^{2}.\qed
\]
\end{proof}

\begin{appendix}

\theoremstyle{accentheader}
\theorembodyfont{\itshape}
\renewtheorem{prop}{Proposition}[section]
\renewtheorem{cor}[prop]{Corollary}

\section{Appendix - Spectral Theory}
\label{s:appendix}

In this appendix we review, for the convenience of the reader, the localization of the periodic spectrum of Hill's operator as well as an estimate of its gap lengths, which both are used in the various parts of this paper. We follow the exposition in \cite{Poschel:2011iua} -- see also
\cite{Kappeler:1999er,Kappeler:2001hsa,Djakov:2006ba}.
Consider the operator
\[
  L(q) = -\ddx + q,
\]
on the interval $[0,2]$ endowed with periodic boundary conditions and $q$ being a complex-valued, $1$-periodic $L^{2}$-potential with vanishing mean value, that is $q\in \Hs_{0,\C}^{0} = \Hs_{0}^{0}(\T,\C)$.

The spectrum of $L(q)$ for $q=0$ consists of $\lm_{0}^{+} = 0$ and the double eigenvalues $\lm_{n}^{+} = \lm_{n}^{-} = n^{2}\pi^{2}$, $n\ge 1$. For $q\in \Hs_{0,\C}^{0}$ arbitrary and $\lm$ sufficiently large, the equation $-f'' + qf = \lm f$ may be regarded as a perturbation of the free equation $-f'' = \lm f$, hence one can expect the eigenvalues to come asymptotically in pairs $\lm_{n}^{\pm}$ satisfying $\lm_{n}^{\pm}\sim n^{2}\pi^{2}$ as $n\to \infty$. The following localization of the eigenvalues is well known -- see e.g. \cite{Poschel:2011iua,Djakov:2006ba,Kappeler:1999er}.

\begin{prop}
\label{ev-as}
If $q\in \Hs_{0,\C}^{0}$, then for all $n\ge 4\n{q}_0$,
\[
  \abs{\lm_{n}^{\pm}-n^{2}\pi^{2}} \le 4\n{q}_{0}.
\]
The remaining eigenvalues for $4\n{q}_{0} > n$ satisfy
\[
  -(1+\n{q}_{0})\n{q}_{0} \le \Re\lm_{0}^{+} \le \Re \lm_{n}^{\pm} \le 256\n{q}_0^{2}.\fish
\]
\end{prop}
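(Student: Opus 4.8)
The plan is to identify the periodic spectrum of $L(q)$ with the zero set of the entire function $\chi(\lm)\defl\Dl^2(\lm)-4$ and to localize those zeros by comparison with the free operator, for which $\chi_0(\lm)=4\cos^2\sqrt{\lm}-4=-4\sin^2\sqrt{\lm}$ (an entire function of $\lm$) has the simple zero $\lm=0$ and the double zeros $\lm=n^2\pi^2$, $n\ge1$. The comparison will be carried out with Rouché's theorem, first on large circles, to obtain the global count and the correct labelling of the $\lm_n^\pm$, and then on small circles around each $n^2\pi^2$, to obtain the quantitative radius $4\n{q}_0$.

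The first step is an a priori estimate for the discriminant. Writing the standard fundamental solutions $y_1,y_2$ of $L(q)y=\lm y$ as solutions of their Volterra integral equations and solving these by Picard iteration gives, with $z\defl\sqrt[+]{\lm}$, bounds of the form $\abs{y_1(1,\lm)-\cos z}+\abs{y_2'(1,\lm)-\cos z}\le c\,\e^{\abs{\Im z}}\n{q}_0/\abs{z}$ and likewise for $y_2$, Cauchy--Schwarz serving to turn the $L^1$-norm of $q$ into $\n{q}_0$ (the period equals $1$). The essential point is that the \emph{first-order} term in $q$ of $\Dl=y_1(1,\cdot)+y_2'(1,\cdot)$ equals $(\sin z/z)[q]$ and hence vanishes, since $[q]=0$; thus in fact
\[
  \abs{\Dl(\lm)-2\cos z}\le\frac{c\,\e^{\abs{\Im z}}}{\abs{\lm}}\,\n{q}_0^2,\qquad\abs{z}\ge c'\n{q}_0,
\]
with a \emph{moderate} absolute constant $c$ (of order one): in the second-order term the exponential factors telescope, $\e^{\abs{\Im z}(1-t)}\e^{\abs{\Im z}(t-s)}\e^{\abs{\Im z}s}=\e^{\abs{\Im z}}$, and the higher-order terms form only a small correction once $\abs{z}$ dominates $\n{q}_0$.

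The second step is Rouché's theorem on the circles $\abs{\lm}=R_N\defl(N+1/2)^2\pi^2$. On such a circle $\abs{\chi_0(\lm)}=4\abs{\sin z}^2=4\bigl(\sin^2(\Re z)+\sinh^2(\Im z)\bigr)$ is bounded below by a positive absolute constant times $\max\bigl(1,\e^{2\abs{\Im z}}\bigr)$, \emph{uniformly in $N$}: the two summands cannot both be small there, since wherever $\Im z$ is small $\Re z$ is close to $(N+1/2)\pi$, hence far from $\pi\Z$. Consequently, once $R_N$ exceeds a fixed multiple of $\n{q}_0^2$, the first step yields $\abs{\chi-\chi_0}<\abs{\chi_0}$ on $\abs{\lm}=R_N$, so $\chi$ and $\chi_0$ have the same number, $2N+1$, of zeros (with multiplicity) inside. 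Letting $N\to\infty$ gives the asymptotics $\lm_n^\pm=n^2\pi^2+o(1)$ together with the claimed ordering, and it confines the finitely many exceptional eigenvalues — those with index $n<4\n{q}_0$, together with $\lm_0^+$ — to a disc $\abs{\lm}<c\,\n{q}_0^2$. After fixing constants this delivers the upper bound $\Re\lm_n^\pm\le256\n{q}_0^2$, while $\Re\lm_0^+\le\Re\lm_n^\pm$ is merely the definition of the lexicographic order.

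Finally, for $n\ge4\n{q}_0$ I would rerun Rouché on the small circle $\Gm_n\defl\{\abs{\lm-n^2\pi^2}=4\n{q}_0\}$. On $\Gm_n$ one has $z=\sqrt[+]{\lm}=n\pi+O(\n{q}_0/n)$, so $\Im z$ stays bounded, $\abs{\cos z}\le1+O(\n{q}_0^2/n^2)$, and $\abs{\chi_0(\lm)}=4\abs{\sin(z-n\pi)}^2\ge a\,\n{q}_0^2/n^2$, whereas the first step gives $\abs{\chi-\chi_0}=\abs{\Dl-2\cos z}\,\abs{\Dl+2\cos z}\le b\,\n{q}_0^2/n^2$; a careful accounting of the constants shows $b<a$, so $\chi$ has exactly the two zeros $\lm_n^\pm$ inside $\Gm_n$, that is $\abs{\lm_n^\pm-n^2\pi^2}\le4\n{q}_0$. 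The lower bound on $\Re\lm_0^+$ is obtained either by the same comparison on a contour passing to the left of the exceptional eigenvalues or by a quadratic-form estimate: pairing $L(q)f=\lm_0^+f$ with $\bar f$ over $[0,2]$ and taking real parts gives $\Re\lm_0^+\int_0^2\abs{f}^2=\int_0^2\abs{f'}^2+\int_0^2\Re(q)\abs{f}^2$, and bounding $\int_0^2\abs{q}\abs{f}^2$ by $\n{q}_0$ times a Gagliardo--Nirenberg product and absorbing the derivative terms yields a bound of the stated shape $\Re\lm_0^+\ge-(1+\n{q}_0)\n{q}_0$. \emph{The main obstacle} is precisely this constant bookkeeping: unlike the comfortable comparison on the large circles, the Rouché comparison on $\Gm_n$ is tight — both $\abs{\chi_0}$ and the error are genuinely of order $\n{q}_0^2/n^2$ — so the absolute constant in $\abs{\Dl(\lm)-2\cos\sqrt[+]{\lm}}$ must be pinned down carefully enough (exploiting the unit period, the telescoping of the exponential factors, and their uniform boundedness on $\Gm_n$) for the argument to close.
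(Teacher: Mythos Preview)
Your approach is sound but genuinely different from the paper's. You work directly with the discriminant: Volterra expansion of $y_1,y_2$ gives $|\Dl(\lm)-2\cos\sqrt\lm|=O(\n{q}_0^2/\abs{\lm})$ thanks to $[q]=0$, and then Rouch\'e compares $\Dl^2-4$ with $-4\sin^2\sqrt\lm$ on large and small circles in the $\lm$-plane. The paper instead performs a Lyapunov--Schmidt reduction on each strip $U_n$: one splits $\Hs^{w}_{\star,\C}=\Pc_n\oplus\Qc_n$ with $\Pc_n=\operatorname{sp}\{e_{\pm n}\}$, shows that $T_n\defl VA_\lm^{-1}Q_n$ has operator norm $\le 2\n{q}_0/n$ (a one-line $\ell^2$-estimate), solves the $Q$-equation by Neumann series for $n\ge 4\n{q}_0$, and reduces the eigenvalue problem to the vanishing of a $2\times 2$ determinant $\det S_n=(\lm-n^2\pi^2-a_n)^2-c_nc_{-n}$ with $\abs{a_n}\le\n{q}_0$, $\abs{c_{\pm n}}\le 2\n{q}_0$. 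Rouch\'e is then applied to this explicit quadratic, which makes the radius $4\n{q}_0$ and the threshold $n\ge 4\n{q}_0$ drop out without delicate constant-chasing.

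Both routes are classical; yours is the Pöschel--Trubowitz line, the paper's follows Pöschel's later weighted-norm treatment. The paper's reduction has two practical advantages here: the constants come out cleanly (the very obstacle you flag on the small circles $\Gm_n$ disappears, since one compares a quadratic in $\lm$ with its perturbation rather than two transcendental functions whose leading terms are both of size $\n{q}_0^2/n^2$), and the same machinery with \emph{shifted weighted norms} immediately yields the gap estimate $\sum w_{2n}^2\abs{\gm_n}^2\lesssim \n{q}_w^2+\dotsb$ used later in the paper. Your approach, by contrast, is more self-contained and requires no operator-theoretic setup, but getting the specific constant $4$ (rather than some absolute $C$) via Rouch\'e on $\Dl^2-4$ is, as you say, tight. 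For the lower bound $\Re\lm_0^+\ge -(1+\n{q}_0)\n{q}_0$ the paper does exactly what you suggest: the quadratic-form argument, pairing $L(q)f=\lm_0^+f$ with $\bar f$ and controlling $\lin{qf,f}$ via $\n{f}_{L^\infty}\le\abs{[f]}+\n{f_x}_0$.
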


The lower bound of the remaining eigenvalues is obtained directly from the quadratic form associated to $L$.

\begin{lem}
\label{lower-b}
Suppose $q\in L_{0}^2(\T,\C)$, then
\[
  -(1+\n{q}_0)\n{q}_0\le \Re \lm_{0}^{+}.\fish
\]
\end{lem}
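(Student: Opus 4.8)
The plan is to obtain the lower bound on $\Re\lm_0^+$ directly from the variational characterization of the bottom of the spectrum of $L(q)$ via its associated quadratic form. Since $L(q) = -\ddx + q$ on $[0,2]$ with periodic boundary conditions, and $\lm_0^+$ is the eigenvalue of smallest real part, the first step is to recall that $\lm_0^+$ is an eigenvalue with a (possibly complex) eigenfunction $f$, normalized so that $\n{f}_{L^2[0,2]} = 1$. Pairing $L(q)f = \lm_0^+ f$ with $f$ in the $L^2[0,2]$ inner product gives
\[
  \lm_0^+ = \int_0^2 \abs{f'}^2\,\dx + \int_0^2 q\abs{f}^2\,\dx,
\]
where the first term is real and nonnegative, so that $\Re\lm_0^+ \ge -\abs{\int_0^2 q\abs{f}^2\,\dx}$. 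The task reduces to bounding $\abs{\int_0^2 q\abs{f}^2\,\dx}$ from above in terms of $\n{q}_0$ and the Dirichlet energy $\int_0^2\abs{f'}^2\,\dx$.

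The key step is an interpolation/Sobolev estimate for $\abs{f}^2$. Writing $g = \abs{f}^2$, one has $g \ge 0$, $\int_0^2 g\,\dx = 1$, and $g$ is absolutely continuous with $g' = 2\Re(\bar f f')$, so $\int_0^2\abs{g'}\,\dx \le 2\int_0^2 \abs{f}\abs{f'}\,\dx \le 2\n{f'}_{L^2}$ by Cauchy--Schwarz (using $\n{f}_{L^2}=1$). Hence $\n{g}_{L^\infty} \le \int_0^2 g\,\dx/2 + \int_0^2\abs{g'}\,\dx \le 1/2 + 2\n{f'}_{L^2}$, using the mean-value bound $\n{g}_{L^\infty} \le \frac12\n{g}_{L^1} + \n{g'}_{L^1}$ valid on an interval of length $2$. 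Now, since $q$ has mean value zero, $\int_0^2 q\,\dx = 0$, so I may replace $g$ by $g - \frac12\int_0^2 g\,\dx = g - \frac12$ in the integral against $q$, getting $\abs{\int_0^2 q g\,\dx} = \abs{\int_0^2 q(g - \tfrac12)\,\dx} \le \n{q}_{L^1[0,2]}\n{g - \tfrac12}_{L^\infty}$. One has $\n{q}_{L^1[0,2]} \le \sqrt{2}\,\n{q}_{L^2[0,2]}$; being careful with the normalization convention of $\n{\cdot}_0$ (the $L^2$-norm over the circle $\T=\R/\Z$, equivalently $\frac1{\sqrt2}$ times the $L^2[0,2]$-norm), this is $\n{q}_{L^1[0,2]} \le 2\n{q}_0$, and $\n{g - \tfrac12}_{L^\infty} \le 2\n{f'}_{L^2[0,2]}$. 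Thus
\[
  \abs*{\int_0^2 q\abs{f}^2\,\dx} \le 4\n{q}_0\n{f'}_{L^2[0,2]}.
\]

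Putting the pieces together, $\Re\lm_0^+ \ge \n{f'}_{L^2[0,2]}^2 - 4\n{q}_0\n{f'}_{L^2[0,2]}$ (the first term being the real part of $\int\abs{f'}^2$, which equals $\int\abs{f'}^2$). Completing the square, $t^2 - 4\n{q}_0 t \ge -4\n{q}_0^2 \ge -(1+\n{q}_0)\n{q}_0$ for $t\ge 0$ when $\n{q}_0\ge\cdots$; more simply, $t^2 - at \ge -a^2/4 \ge -a - \cdots$ — I would instead argue directly that $-4\n{q}_0 t \ge -2t^2 - 2\n{q}_0^2$ by Young's inequality, so $\Re\lm_0^+ \ge -t^2 - 2\n{q}_0^2$, which is not quite of the stated form; the cleanest route is: $t^2 - 4\n{q}_0 t \ge \min_{s\ge0}(s^2-4\n{q}_0 s) = -4\n{q}_0^2$, and since $4\n{q}_0^2 \le (1+\n{q}_0)\n{q}_0$ fails for small $\n{q}_0$, I should instead retain a bit of the $t^2$ term, e.g. bound $\abs{\int q\abs{f}^2} \le \n{q}_{L^1}(\tfrac12 + 2t) \le 2\n{q}_0 + 4\n{q}_0 t \le 2\n{q}_0 + \tfrac12 t^2 + 8\n{q}_0^2$, giving $\Re\lm_0^+ \ge \tfrac12 t^2 - 2\n{q}_0 - 8\n{q}_0^2 \ge -(1+\n{q}_0)\n{q}_0$ after absorbing constants into the argument (this is where the exact normalization of $\n{\cdot}_0$ and the constants must be tracked). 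The main obstacle is precisely this bookkeeping of absolute constants and the $\T=\R/\Z$ versus $[0,2]$ normalization; the conceptual content — variational principle plus a one-dimensional Sobolev embedding exploiting $[q]=0$ — is routine.
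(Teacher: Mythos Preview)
Your approach is conceptually the same as the paper's: pair the eigenvalue equation against the normalized eigenfunction to get a quadratic-form identity, then control the potential term via a one-dimensional Sobolev embedding and absorb the derivative term. Where you diverge is in the execution, and this is why your constants do not close.

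The paper applies the embedding to $f$ itself rather than to $g=\abs{f}^{2}$, and pairs via Cauchy--Schwarz in $L^{2}$ rather than H\"older $L^{1}$--$L^{\infty}$. With the inner product $\lin{f,g}=\tfrac12\int_0^2 f\bar g\,\dx$ and the normalization $\n{f}_0=1$, one has
\[
  \abs{\lin{qf,f}} \le \n{q}_0\,\n{\abs f^{2}}_0 \le \n{q}_0\,\n{f}_{L^\infty}\n{f}_0
  = \n{q}_0\,\n{f}_{L^\infty},
\]
and then the embedding $\n{f}_{L^\infty}\le \abs{[f]}+\n{f_x}_0\le 1+\n{f_x}_0$ together with Young's inequality $\n{f_x}_0\n{q}_0\le \n{f_x}_0^2+\tfrac14\n{q}_0^2$ gives
\[
  \Re\lm_0^+ \ge \n{f_x}_0^2 - \n{q}_0 - \n{f_x}_0^2 - \tfrac14\n{q}_0^2
  \ge -(1+\n{q}_0)\n{q}_0.
\]
Two points worth noting: the mean-zero hypothesis $[q]=0$ is never used here, so your detour through $g-\tfrac12$ is unnecessary; and by estimating $\n{f}_{L^\infty}$ rather than $\n{\abs f^2}_{L^\infty}$ you pick up one factor of $\n{f_x}_0$ instead of two, which is exactly what makes the constants work. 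Your route yields $\Re\lm_0^+\ge -4\n{q}_0^2$ (or variants with larger constants), which is in fact \emph{stronger} than the stated bound for small $\n{q}_0$ but weaker for large $\n{q}_0$; you had the direction of the failure reversed.
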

\begin{proof}
Suppose $f\in H^2([0,2])$ is an $L^{2}$-normalized eigenfunction of $\lm_{0}^{+}$, then
\[
  \lm_{0}^{+} = \lin{Lf,f} = \lin{f_x,f_x} + \lin{qf,f},
\]
with the $L^{2}$-inner product $\lin{f,g} \defl \frac{1}{2}\int_{0}^{2} f\ob{g}\,\dx$. With Cauchy-Schwarz, the $L^\infty$-estimate $\n{f}_{L^{\infty}} \le \abs{[f]} + \n{f_{x}}_{0}$, and $\abs{[f]} \le\n{f}_{0} = 1$,
\begin{align*}
  \abs{\lin{f,qf}} \le \n{f}_{L^{\infty}}\n{f}_0\n{q}_0 &\le (\abs{[f]} + \n{f_x}_0)\n{q}_0\\
  &\le
  \n{q}_0 + \n{f_x}_0^2 + \frac{1}{4}\n{q}_0^2.
\end{align*}
Consequently,
\begin{align*}
  \Re \lm_{0}^{+} = \lin{f_{x},f_{x}} + \Re \lin{qf,f}
  \ge - \n{q}_0 - \frac{1}{4}\n{q}_0^2
  \ge -(1+\n{q}_0)\n{q}_0.\qed
\end{align*}
\end{proof}

To prove the remaining part of Proposition~\ref{ev-as}, we cover the right complex half-plane with the closed strips
\[
  U_n \defl \setdef{\lm\in\C}{\abs{\Re \lm-n^{2}\pi^{2}}\le 12n},\qquad n\ge 1.
\]
For $q = 0$ the eigenspace of the double eigenvalue $\lm_{n}^{+} = \lm_{n}^{-} = n^{2}\pi^{2}$, $n\ge 1$, is spanned by $e_{n} \defl \e^{\ii n\pi x}$ and $e_{-n} \defl \e^{-\ii n\pi x}$. When $n$ is sufficiently large, then for $\lm\in U_{n}$ the dominant modes of a solution of $Lf = \lm f$ are thus expected to be $e_{\pm n}$. Therefore, it makes sense to separate these modes from the others by a Lyapunov-Schmidt reduction. To this end, denote the space of \emph{2-periodic} complex-valued $\Hs^{w}$-functions by $\Hs_{\star,\C}^{w}$ and consider the splitting
\begin{align*}
  \Hs_{\star,\C}^{w}
   &= \Pc_n\oplus \Qc_n\\
   &=
  \operatorname{sp}\setdef{e_{k}}{\abs{k}=n}
   \oplus
  \overbar{\operatorname{sp}}\setdef{e_{k}}{\abs{k}\neq n}.
\end{align*}
The projections onto $\Pc_n$ and $\Qc_n$ are denoted by $P_n$ and $Q_n$,
respectively.

We write the eigenvalue equation $Lf = \lm f$ in the form
\[
  A_\lm f \defl f'' + \lm f = V f,
\]
where $V$ denotes the operator of multiplication with $q$.
Since $A_\lm$ is a Fourier-multiplier, by writing
\[
  f = u + v = P_nf + Q_nf,
\]
we can decompose the equation $A_\lm f = V f$ into the two equations
\begin{align*}
  &A_\lm u = P_n V(u+v),\\
  &A_\lm v = Q_n V(u+v),
\end{align*}
called the $P$- and the $Q$-equation.

We first solve the $Q$-equation on each strip $U_n$ by writing $V v$ as a function of $u$. With foresight to estimating the gap lengths, we consider operator norms induced by \emph{shifted weighted norms} \cite{Poschel:2011iua}. For $u$ in $\Hs^w_{\star,\C}$ the $i$-shifted $\Hs^w$-norm is given by
\[
\n{u}_{w;i}^2
 \defl \n{u\e_i}_{w}^2
 = \sum_{k\in\Z} w_{k+i}^{2} \abs{u_k}^2,
 \qquad i\in\Z.
\]

\begin{lem}
\label{Tn-est}
If $q\in \Hs_{0,\C}^{w}$, then for any $n\ge 1$ and $\lm\in U_{n}$,
\[
  T_{n} = VA_{\lm}^{-1}Q_{n}
\]
is a bounded linear operator on $\Hs^{w}_{\star,\C}$ with norm
\[
  \n{T_{n}}_{w;i} \le \frac{2}{n}\n{q}_{w},\qquad i\in\Z.\fish
\]
\end{lem}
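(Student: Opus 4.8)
The plan is to estimate the operator norm of $T_n = VA_\lm^{-1}Q_n$ by splitting it into the two elementary pieces: multiplication by $q$, and the Fourier-multiplier $A_\lm^{-1}Q_n$. Since $T_n$ acts on $\Hs^w_{\star,\C}$ and we want the bound in every shifted norm $\n{\cdot}_{w;i}$, I would first recall that multiplication by $q$ is bounded from $\Hs^w_{\star,\C}$ into itself because $w$ is submultiplicative: for $h=\sum_k h_k \e_k$ one has $(qh)_k = \sum_j q_{k-j} h_j$, so by $w_{k+i}\le w_{(k-j)}w_{j+i}$ (using symmetry of $w$) and Young's inequality one gets $\n{qh}_{w;i}\le \n{q}_w\,\n{h}_{w;i}$, uniformly in the shift $i$. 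This reduces the problem to showing $\n{A_\lm^{-1}Q_n h}_{w;i}\le \tfrac{2}{n}\n{h}_{w;i}$.

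The second step is a pointwise bound on the multiplier symbol. On $\Qc_n$ the operator $A_\lm = \ddd^2/\ddd x^2 + \lm$ acts on $\e_k$ by multiplication with $\lm - k^2\pi^2$, so $A_\lm^{-1}Q_n$ has symbol $(\lm-k^2\pi^2)^{-1}$ for $|k|\neq n$ and $0$ for $|k|=n$. Because this is a Fourier multiplier, it is bounded on $\Hs^w_{\star,\C}$ — and on every shifted norm $\n{\cdot}_{w;i}$, since conjugation by $\e_i$ commutes with a diagonal multiplier — with norm equal to $\sup_{|k|\neq n}|\lm-k^2\pi^2|^{-1}$. So the whole lemma comes down to the elementary lower bound
\[
  \inf_{|k|\neq n}\ \abs{\lm - k^2\pi^2} \ge \frac{n}{2},\qquad \lm\in U_n,
\]
which combined with the multiplication bound gives $\n{T_n}_{w;i}\le \tfrac{2}{n}\n{q}_w$.

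To verify that infimum, fix $\lm\in U_n$, so $|\Re\lm - n^2\pi^2|\le 12n$, and let $k\ge 0$, $k\neq n$. Then $\abs{\lm-k^2\pi^2}\ge \abs{\Re\lm - k^2\pi^2}\ge \pi^2\abs{n^2-k^2} - 12n$. For $k\neq n$ one has $\abs{n^2-k^2}=\abs{n-k}\,(n+k)\ge n+k\ge n$ when $k\le n-1$, and $\abs{n^2-k^2}\ge (k-n)(k+n)\ge k+n> 2n$ when $k\ge n+1$; in either case $\abs{n^2-k^2}\ge n$ suffices, giving $\abs{\lm-k^2\pi^2}\ge \pi^2 n - 12n = (\pi^2-12)n$. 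That constant is negative, so the crude estimate must be sharpened: for $k\le n-1$ use $\abs{n^2-k^2}\ge 2n-1$, and for $k\ge n+1$ use $\abs{n^2-k^2}\ge 2n+1$, so in fact $\abs{n^2-k^2}\ge 2n-1$ always, whence $\abs{\lm-k^2\pi^2}\ge \pi^2(2n-1) - 12n \ge (2\pi^2-12)n - \pi^2\ge \tfrac n2$ once $n$ is not too small, and one checks the finitely many remaining small $n$ directly (or simply chooses the strip half-width $12n$ with enough slack). The main obstacle is purely this bookkeeping: making sure the numerical constant in $\dist(\lm,\{k^2\pi^2\}_{k\neq n})$ comes out at least $n/2$ for \emph{all} $n\ge 1$ and all $\lm\in U_n$, rather than just asymptotically; everything else — the submultiplicativity argument for the multiplication operator and the diagonal structure of $A_\lm^{-1}Q_n$ — is routine and uniform in the shift $i$.
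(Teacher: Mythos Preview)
Your splitting has a genuine gap: the multiplication bound $\n{qh}_{w;i}\le\n{q}_w\,\n{h}_{w;i}$ is false in general. Take $w\equiv 1$; this would assert that $L^2(\T)$ is a Banach algebra under pointwise multiplication, which it is not. After using submultiplicativity you have $w_{k+i}\abs{(qh)_k}\le \sum_j (w_{k-j}\abs{q_{k-j}})\,(w_{j+i}\abs{h_j})$, a convolution of two $\ell^2$-sequences, and Young's inequality gives $\ell^1*\ell^2\subset\ell^2$, not $\ell^2*\ell^2\subset\ell^2$. So one of the two factors must lie in a weighted $\ell^1$-space. Your sup-norm bound on the symbol of $A_\lm^{-1}Q_n$ only places $g=A_\lm^{-1}Q_n f$ back in $\ell^2_w$, and from there multiplication by $q\in\Hs^w$ is not controlled by $\n{q}_w$.

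The paper's argument repairs exactly this point: it first shows that $A_\lm^{-1}Q_n$ maps the shifted $\ell^2_w$-space into the shifted $\ell^1_w$-space, using Cauchy--Schwarz and the fact that the symbol $(\lm-m^2\pi^2)^{-1}$ is not merely bounded but square-summable over $\abs{m}\neq n$, with $\bigl(\sum_{\abs{m}\neq n}\abs{n^2-m^2}^{-2}\bigr)^{1/2}\le 2/n$. Then Young's inequality in the legitimate form $\ell^2*\ell^1\subset\ell^2$ handles the multiplication by $q$ and yields $\n{q}_w\cdot\tfrac{2}{n}$. Your numerical struggle at the end is a symptom of the wrong decomposition: the sup of $\abs{\lm-k^2\pi^2}^{-1}$ over $\abs{k}\neq n$ is of order $1/n$ only with an awkward constant, whereas its $\ell^2$-norm is cleanly $\le 2/n$ --- and it is the $\ell^2$-norm that the correct argument actually needs.
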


\begin{proof}
Write $T_n f = V g$ with $g = A_\lm^{-1}Q_nf$. Since $A_\lm e_{m} = (\lm-m^{2}\pi^{2})e_m$, one checks that for $\abs{m}\neq n$
\[
  \min_{\lm\in U_n} \abs{\lm-m^{2}\pi^{2}} \ge \abs{n^{2}-m^{2}} \ge 1.
\]
Hence, the restriction of $A_\lm$ to $\Qc_n$ is boundedly invertible, and the function
\[
  g = A_\lm^{-1}Q_nf = \sum_{\abs{m}\neq n}\frac{f_m}{\lm-m^{2}\pi^{2}}e_m.
\]
is well defined. By Hölder's inequality we obtain for the weighted $\ell^1$-norm
\[
  \n{g\e_{i}}_{\ell^1_w}
  =
  \sum_{\abs{m}\neq n} \frac{w_{m+i}\abs{f_m}}{\abs{\lm-m^{2}\pi^{2}}}
  \le \biggl(\sum_{\abs{m}\neq n} \frac{1}{\abs{n^{2}-m^{2}}^2}\biggr)^{1/2}
      \n{f}_{w;i},
\]
uniformly for $\lm\in U_n$. Moreover,
\[
  \sum_{\abs{m}\neq n} \frac{1}{\abs{n^{2}-m^{2}}^2}
   \le \frac{2}{n^{2}}\sum_{m\ge 1}\frac{1}{m^{2}}
   \le \frac{4}{n^{2}}.
\]
Finally, by Young's inequality for the convolution of sequences
\[
  \n{T_nf}_{w;i} = \n{q(ge_{i})}_{w}
  \le \n{q}_w\n{g\e_i}_{\ell^1_w}
  \le \frac{2}{n}\n{q}_w\n{f}_{w;i}.\qed
\]
\end{proof}

Consequently, $T_n$ is a $\frac 12$-contraction on $\Hs_{\star,\C}^{w}$ for all $n\ge 4\n{q}_w$. If we multiply the $Q$-equation from the left by $V A_\lm^{-1}$, then
\begin{align*}
V v
= V A_\lm^{-1} Q_nV(u+v)
= T_nV(u+v),
\end{align*}
which may be written as
\[
(\Id - T_n)V v = T_nV u.
\]
Hence, for $n\ge 4\n{q}_w$ one finds a unique solution
\[
  V v = \hat{T}_nT_n V u,\qquad \hat{T}_{n} \defl (I-T_{n})^{-1}
\]
of the $Q$-equation. Substituting this solution into the $P$-equation yields
\[
  A_\lm u
   =
  P_n (\Id + \hat{T}_nT_n)V u
   =
  P_n\hat{T}_nV u.
\]
Writing the latter as
\[
  S_n u = 0,\qquad S_n \defl A_\lm - P_n\hat{T}_nV,
\]
we immediately conclude that there exists a one-to-one relationship between a
nontrivial solution of $S_n u = 0$ and a nontrivial 2-periodic solution
of $Lf = \lm f$. Hence, a complex number $\lm\in U_n$ is a
periodic eigenvalue of $L$ if and only if the determinant of
$S_n$ vanishes.

Recall that $P_n$ is the orthogonal projection onto the two-dimensional space
$\Pc_n$. The matrix representation of an operator $B$ on $\Pc_n$ is given by
\[
  \left(\lin{Be_{\pm n},e_{\pm n}}\right)_{\pm,\pm}.
\]
Therefore, we find for $S_n$ the representation
\begin{align*}
  A_\lm
   =
  \mat[\Bigg]{
   \lm - \sg_{n}\\
  &\lm - \sg_{n}
  },
  \qquad\qquad
  P_n\hat{T}_n\Phi =
  \mat[\Bigg]{
  a_n & c_n\\
  c_{-n} & a_{-n}
  },
\end{align*}
with $\sg_{n}=n^{2}\pi^{2}$, and the coefficients of the latter matrix given by
\begin{align*}
  a_n  \defl \lin{\hat{T}_nV e_n,e_n},\qquad
   c_n \defl \lin{\hat{T}_nV e_{-n},e_n}.
\end{align*}
Moreover, by inspecting the expansions of $a_{n}$ and $a_{-n}$ using the representations of $T_{n}$ and $V$ in Fourier space, one concludes that
\[
  a_{n} = \lin{\hat{T}_{n}Ve_{n},e_{n}} = \lin{\hat{T}_{n}Ve_{-n},e_{-n}} = a_{-n}.
\]
Hence the diagonal of $S_{n}$ is homogenous,
\[
  S_{n} = \mat[\Bigg]{\lm - \sg_{n} - a_{n} & -c_{n}\\ -c_{-n} & \lm - \sg_{n} -a_{n}}.
\]

We introduce the following notion for the sup-norm of a complex-valued function
\[
  \abs{f}_U \defl \sup_{\lm\in U} \abs{f(\lm)}.
\]

\begin{lem}
\label{coeff-est}
If $n\ge 4\n{q}_{w}$, then $\abs{a_{n}}_{U_{n}} \le 2\n{T_{n}}_{w;n}\n{q}_{w}$ and
\[
  w_{2n}\abs{c_{\pm n}-q_{\pm n}}_{U_{n}} \le 2\n{T_{n}}_{w;\pm n}\n{q}_{w}.\fish
\]
\end{lem}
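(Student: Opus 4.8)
The plan is to convert the defining formulas $a_n = \lin{\hat T_n Ve_n,e_n}$ and $c_{\pm n} = \lin{\hat T_n Ve_{\mp n},e_{\pm n}}$ into the claimed bounds by splitting off their $T_n$-free parts. Since $n\ge 4\n{q}_w$, Lemma~\ref{Tn-est} gives $\n{T_n}_{w;i}\le \frac 2n\n{q}_w\le \frac12$ for every $i\in\Z$ and every $\lm\in U_n$, so $\hat T_n = (\Id-T_n)^{-1} = \Id + \hat T_n T_n$ is a well-defined bounded operator on $\Hs^{w}_{\star,\C}$ with $\n{\hat T_n}_{w;i}\le 2$. Inserting $\hat T_n = \Id + \hat T_n T_n$ gives
\[
  a_n = \lin{Ve_n,e_n} + \lin{\hat T_n T_n Ve_n,e_n},\qquad
  c_{\pm n} = \lin{Ve_{\mp n},e_{\pm n}} + \lin{\hat T_n T_n Ve_{\mp n},e_{\pm n}}.
\]
The first summands are genuine Fourier coefficients of $q$: since $V$ is multiplication by $q$ and $q = \sum_k q_k\e^{\ii 2k\pi x}$ has zero mean, one reads off $\lin{Ve_{\pm n},e_{\pm n}} = q_0 = 0$ and $\lin{Ve_{\mp n},e_{\pm n}} = q_{\pm n}$. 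Thus $a_n = \lin{\hat T_n T_n Ve_n,e_n}$ and $c_{\pm n}-q_{\pm n} = \lin{\hat T_n T_n Ve_{\mp n},e_{\pm n}}$, and only these remainder terms have to be estimated.

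For the remainder terms I would use the single elementary bound that for $f\in\Hs^{w}_{\star,\C}$ and $k,i\in\Z$ one has $\abs{\lin{f,e_k}} = \abs{f_k} \le w_{k+i}^{-1}\n{f}_{w;i}$, together with submultiplicativity of the operator norms $\n{\cdot}_{w;i}$ (each being obtained from the ambient $w$-norm by an isometric shift of the Fourier index). For the off-diagonal coefficients, take $f = \hat T_n T_n Ve_{\mp n}$, $k = \pm n$ and the shift $i = \pm n$, so that $w_{k+i} = w_{\pm 2n} = w_{2n}$ and $\n{\hat T_n}_{w;\pm n}\le 2$, to get
\[
  w_{2n}\abs{c_{\pm n}-q_{\pm n}} \le \n{\hat T_n T_n Ve_{\mp n}}_{w;\pm n}
   \le 2\n{T_n}_{w;\pm n}\,\n{Ve_{\mp n}}_{w;\pm n}.
\]
It then remains to verify the exact identity $\n{Ve_{\mp n}}_{w;\pm n} = \n{q}_w$, which is a one-line Fourier computation: the mode $\e^{\ii 2k\pi x}$ of $q$ becomes the mode $e_{2k\mp n}$ of $qe_{\mp n}$, carrying the $(\pm n)$-shifted weight $w_{(2k\mp n)\pm n} = w_{2k}$, so that $\n{Ve_{\mp n}}_{w;\pm n}^2 = \sum_k w_{2k}^2\abs{q_k}^2 = \n{q}_w^2$. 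This establishes $w_{2n}\abs{c_{\pm n}-q_{\pm n}}_{U_n}\le 2\n{T_n}_{w;\pm n}\n{q}_w$.

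The diagonal coefficient is handled the same way: with $f = \hat T_n T_n Ve_n$, $k = n$ and shift $i = -n$ one has $w_{k+i} = w_0\ge 1$ and $\n{Ve_n}_{w;-n} = \n{q}_w$, giving $\abs{a_n}_{U_n}\le 2\n{T_n}_{w;-n}\n{q}_w$; carrying out the same step on $a_{-n} = \lin{\hat T_n Ve_{-n},e_{-n}}$ with shift $i = n$ gives $\abs{a_{-n}}_{U_n}\le 2\n{T_n}_{w;n}\n{q}_w$. Since $a_n = a_{-n}$ was noted just above the statement, the bound $\abs{a_n}_{U_n}\le 2\n{T_n}_{w;n}\n{q}_w$ follows. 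There is no genuine analytic obstacle here, as everything reduces to Lemma~\ref{Tn-est}; the only delicate point is the index bookkeeping --- choosing, in each case, the shift that makes the weight factor cancel exactly, keeping the $[0,2]$-Fourier indices straight against the $\e^{\ii 2k\pi x}$-expansion of $q$, and invoking $a_n = a_{-n}$ to recast the bound for $a_n$ in the stated symmetric form.
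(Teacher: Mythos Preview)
Your proof is correct and follows essentially the same route as the paper: expand $\hat T_n = \Id + \hat T_n T_n$ (the paper writes the equivalent $\hat T_n = \Id + T_n\hat T_n$), identify the leading Fourier coefficients, and bound the remainder via $w_{2n}\abs{\lin{f,e_{\pm n}}}\le \n{f}_{w;\pm n}$ together with $\n{Ve_{\mp n}}_{w;\pm n}=\n{q}_w$. Your explicit use of $a_n=a_{-n}$ to recast the diagonal bound with the shift index $+n$ is a small elaboration on the paper's terse ``the proof for $a_n$ and $c_{-n}$ is the same''.
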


\begin{proof}
With $\hat{T}_{n} = I + T_{n}\hat{T}_{n}$ we obtain $c_{n} = q_{n} + \lin{T_{n}\hat{T}_{n}Ve_{-n},e_{n}}$. Furthermore, since $\lin{f,\e_n} = \lin{f\e_n,\e_{2n}}$ for any function $f$, we conclude
\[
  w_{2n}\abs{\lin{f,\e_{n}}}
  \le \norm{f e_{n}}_{w} = \norm{f}_{w;n}.
\]
The claim follows with
\[
  \n{\hat{T}_{n}T_{n}V\e_{-n}}_{w;n} \le 2\n{T_{n}}_{w;n} \n{V\e_{-n}}_{w;n},
\]
and $\n{V\e_{-n}}_{w;n} = \n{V\e_{0}}_{w} = \n{q}_{w}$. The proof for $a_{n}$ and $c_{-n}$ is the same.\qed
\end{proof}

The preceding lemma implies that the determinant of $S_n$
\[
\det S_n = (\lm - \sg_{n} - a_n)^2 - c_{n}c_{-n}
\]
is an analytic function in $\lm$, which is close to $(\lm-\sg_{n})^2$ for $n$ sufficiently large. This is what we need to localize the eigenvalues.

\begin{lem}
\label{Sn-roots}
If $n\ge 4\n{q}_{0}$, then the determinant of $S_{n}$ has exactly two complex roots $\xi_{-},\xi_{+}$ in $U_{n}$, which are contained in
\[
  D_{n} = \setdef{\lm}{\abs{\lm-\sg_{n}}\le 4\n{q}_{0}}.\fish
\]
\end{lem}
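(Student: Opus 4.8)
The plan is to localize the zeros of the analytic function $\det S_{n}=(\lm-\sg_{n}-a_{n})^{2}-c_{n}c_{-n}$ on $U_{n}$ by a Rouch\'e argument, comparing it with the polynomial $g(\lm)=(\lm-\sg_{n})^{2}$, which has a double zero at $\sg_{n}\in D_{n}$ and no other zero. If $\n{q}_{0}=0$ then $q=0$, so $a_{n}=c_{\pm n}=0$, $\det S_{n}=g$, and $D_{n}=\{\sg_{n}\}$; this settles that case, so assume $\n{q}_{0}>0$. First I would record the size of the matrix coefficients on the strip $U_{n}$: applying Lemma~\ref{Tn-est} and Lemma~\ref{coeff-est} with the trivial weight $w\equiv1$ (so that $\n{q}_{w}=\n{q}_{0}$ and $w_{2n}=1$) gives
\[
  \abs{a_{n}}_{U_{n}}\le \frac{4}{n}\n{q}_{0}^{2},\qquad
  \abs{c_{\pm n}-q_{\pm n}}_{U_{n}}\le \frac{4}{n}\n{q}_{0}^{2}.
\]
Since $n\ge 4\n{q}_{0}$ forces $\frac{4}{n}\n{q}_{0}^{2}\le\n{q}_{0}$, and $\abs{q_{\pm n}}\le\n{q}_{0}$, we obtain $\abs{a_{n}}_{U_{n}}\le\n{q}_{0}$ and $\abs{c_{\pm n}}_{U_{n}}\le 2\n{q}_{0}$. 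Moreover $D_{n}\subset U_{n}$, because $\lm\in D_{n}$ gives $\abs{\Re\lm-n^{2}\pi^{2}}\le\abs{\lm-\sg_{n}}\le 4\n{q}_{0}\le n\le 12n$, so all these bounds are available on $D_{n}$ and on its boundary.

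Next I would exclude zeros of $\det S_{n}$ outside the small disc $D_{n}$, which is needed because $U_{n}$ is an unbounded strip and Rouch\'e does not apply to it directly. For $\lm\in U_{n}$ with $\abs{\lm-\sg_{n}}>4\n{q}_{0}$ the bounds above yield $\abs{\lm-\sg_{n}-a_{n}}\ge\abs{\lm-\sg_{n}}-\abs{a_{n}}>3\n{q}_{0}$, hence
\[
  \abs{\det S_{n}}\ge\abs{\lm-\sg_{n}-a_{n}}^{2}-\abs{c_{n}}\abs{c_{-n}}
   >9\n{q}_{0}^{2}-4\n{q}_{0}^{2}=5\n{q}_{0}^{2}>0.
\]
Thus every zero of $\det S_{n}$ in $U_{n}$ already lies in $D_{n}$.

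It then remains to count the zeros inside $D_{n}$. On $\partial D_{n}$ one has $\abs{\lm-\sg_{n}}=4\n{q}_{0}$ and
\[
  \det S_{n}-g=-2a_{n}(\lm-\sg_{n})+a_{n}^{2}-c_{n}c_{-n},
\]
so $\abs{\det S_{n}-g}\le 2\n{q}_{0}\cdot4\n{q}_{0}+\n{q}_{0}^{2}+4\n{q}_{0}^{2}=13\n{q}_{0}^{2}<16\n{q}_{0}^{2}=\abs{g}$ there. By Rouch\'e's theorem $\det S_{n}$ has exactly as many zeros in $D_{n}$, counted with multiplicity, as $g$, namely two; denote them $\xi_{-},\xi_{+}$. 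Together with the previous paragraph these are precisely the zeros of $\det S_{n}$ in $U_{n}$, which proves the lemma.

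The only slightly delicate point is the unboundedness of $U_{n}$: Rouch\'e's theorem applies only on the bounded disc $D_{n}$, so the zeros in $U_{n}\setminus D_{n}$ must be excluded separately, which is exactly the purpose of the crude lower bound $\abs{\det S_{n}}>5\n{q}_{0}^{2}$; everything else is routine bookkeeping with the estimates of Lemmas~\ref{Tn-est} and~\ref{coeff-est}.
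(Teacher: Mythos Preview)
Your argument is correct and follows essentially the same route as the paper: bound $a_{n}$ and $c_{\pm n}$ via Lemmas~\ref{Tn-est} and~\ref{coeff-est}, then use Rouch\'e on $\partial D_{n}$. The only organizational difference is that the paper performs two successive Rouch\'e comparisons (first $h=\lm-\sg_{n}-a_{n}$ against $\lm-\sg_{n}$, then $\det S_{n}=h^{2}-c_{n}c_{-n}$ against $h^{2}$), whereas you compare $\det S_{n}$ directly with $(\lm-\sg_{n})^{2}$ in a single step; your explicit treatment of the unbounded strip $U_{n}\setminus D_{n}$ and of the degenerate case $\n{q}_{0}=0$ is a welcome clarification that the paper leaves implicit.
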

\begin{proof}
Let $h = \lm - \sg_{n} - a_n$. The preceding lemma together with Lemma~\ref{Tn-est} gives
\[
\abs{a_n}_{U_n} \le \n{q}_{0} < \inf_{\lm\in U_n\setminus D_n} \abs{\lm-\sg_{n}}.
\]
Thus it follows from Rouché's Theorem that $h$ has a single root in $D_n$, just as $(\lm-\sg_{n})$. In a similar fashion, we infer from
\[
  \abs{c_{n}c_{-n}}_{U_{n}} \le 4\n{q}_{0}^{2}  < \inf_{\lm\in U_n\setminus D_n} \abs{h}^{2},
\]
that $h^2$ and $\det S_n$ have the same number of roots in $D_n$, namely two, while $\det S_n$ clearly has no root in $U_n\setminus D_n$.\qed
\end{proof}

\begin{proof}[Proof of Proposition~\ref{ev-as}.]
For each $n \ge 4\n{q}_0$ Lemma~\ref{Sn-roots} applies giving us two roots $\xi_+$ and $\xi_-$ of $\det S_n$ which are contained in $D_n\subset U_n$. Since the strips $U_n$ cover the right complex halfplane, and $\lm_{n}^{\pm}\sim n^{2}\pi^{2}$ as $n\to \infty$, it follows by a standard counting argument that these roots have to be the periodic eigenvalues $\lm_n^\pm$. Thus
\[
  \abs{\lm_{n}^{\pm} - \sg_{n}} \le 4\n{q}_{0},\qquad
  n\ge 4\n{q}_{0}.
\]
Moreover, these are the only eigenvalues contained in $\bigcup_{n \ge 4\n{q}_0} U_n$. Hence if we choose $N\ge 1$ such that $N\ge 4\n{q}_0 > N-1$ then for any $1\le n < N$,
\[
  \Re \lm_{n}^{\pm} \le N^2\pi^2-12N \le 16(N-1)^{2} \le 256\n{q}_0^2.\qed
\]
\end{proof}

We now turn our attention to estimating the gap lengths $\gm_{n} = \lm_{n}^{+}-\lm_{n}^{-}$, which by the preceding considerations satisfy for $n\ge 4\n{q}_{0}$
\[
  \gm_{n}^{2} = (\xi_{+}-\xi_{-})^{2}
\]
with $\xi_{\pm}$ being the complex roots of $\det S_{n}$ on $U_{n}$.

\begin{lem}
If $n\ge 4\n{q}_{0}$, then
\[
  \abs{\xi_+-\xi_-}^{2} \le 9\abs{c_{n}c_{-n}}_{U_{n}}.\fish
\]
\end{lem}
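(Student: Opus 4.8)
The idea is not to attack the quartic‑looking function $\det S_n=(\lm-\sg_n-a_n)^2-c_nc_{-n}$ directly, but to work with $h(\lm)\defl\lm-\sg_n-a_n(\lm)$, which on $D_n$ is a tiny analytic perturbation of the affine map $\lm\mapsto\lm-\sg_n$. By Lemma~\ref{Sn-roots} both roots $\xi_+,\xi_-$ of $\det S_n$ lie in $D_n$, and at each of them $\det S_n=0$ reads $h(\xi_\pm)^2=(c_nc_{-n})(\xi_\pm)$. If $\xi_+=\xi_-$ the asserted inequality is trivial, so assume $\xi_+\neq\xi_-$.

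First I would read off an \emph{upper} bound for $h$ at the roots: since $\xi_\pm\in D_n\subseteq U_n$, the relation $h(\xi_\pm)^2=(c_nc_{-n})(\xi_\pm)$ gives $\abs{h(\xi_\pm)}\le\abs{c_nc_{-n}}_{U_n}^{1/2}$, hence by the triangle inequality
\[
  \abs{h(\xi_+)-h(\xi_-)}\le 2\abs{c_nc_{-n}}_{U_n}^{1/2}.
\]
Next I would produce a matching \emph{lower} bound in terms of $\abs{\xi_+-\xi_-}$. Because $D_n$ is a disk, the segment joining $\xi_-$ to $\xi_+$ lies in $D_n$, so the fundamental theorem of calculus yields
\[
  h(\xi_+)-h(\xi_-)=(\xi_+-\xi_-)\Bigl(1-\int_0^1 a_n'\bigl(\xi_-+t(\xi_+-\xi_-)\bigr)\,\dt\Bigr).
\]
It remains to bound $\abs{a_n'}$ on $D_n$ by a Cauchy estimate. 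For $\lm\in D_n$ one has $\abs{\Re\lm-\sg_n}\le 4\n{q}_0\le n$, so the disk of radius $11n$ about $\lm$ is contained in the strip $U_n$; combined with $\abs{a_n}_{U_n}\le\tfrac4n\n{q}_0^2\le\n{q}_0$ (from Lemma~\ref{coeff-est} and Lemma~\ref{Tn-est}, using $n\ge4\n{q}_0$), this gives $\abs{a_n'(\lm)}\le\n{q}_0/(11n)\le 1/44$ on $D_n$. Therefore $\abs{h(\xi_+)-h(\xi_-)}\ge(1-1/44)\abs{\xi_+-\xi_-}\ge\tfrac23\abs{\xi_+-\xi_-}$.

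Comparing the two bounds gives $\abs{\xi_+-\xi_-}\le 3\abs{c_nc_{-n}}_{U_n}^{1/2}$, and squaring yields the claim. The only point requiring a little care is the Cauchy estimate for $a_n'$, where one must keep track of how $D_n$ sits inside the strip $U_n$; but since the bound actually needed is only $\abs{a_n'}\le 1/3$ on $D_n$, there is a comfortable margin and no sharpness is required anywhere in the argument.
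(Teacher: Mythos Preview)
Your proof is correct and follows essentially the same line as the paper's: both arguments hinge on the observation that $h(\lm)=\lm-\sg_n-a_n(\lm)$ satisfies $h(\xi_\pm)^2=c_nc_{-n}(\xi_\pm)$ at the roots and is nearly affine on $D_n$ because $\abs{a_n'}$ is small there by a Cauchy estimate. The only cosmetic difference is that the paper introduces an explicit square root $\ph_n=\sqrt{c_nc_{-n}}$ and writes the roots in fixed-point form $\xi=\sg_n+a_n(\xi)\pm\ph_n(\xi)$ before applying the mean-value bound for $a_n$, whereas you bound $\abs{h(\xi_+)-h(\xi_-)}$ from above and below directly; your variant has the small advantage of sidestepping any discussion of the branch of $\sqrt{c_nc_{-n}}$.
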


\begin{proof}
We write $\det S_n=g_+g_-$ with
\begin{align*}
g_{\pm} = \lambda - \sg_{n} - a_n \mp \ph_n,\qquad \ph_{n} = \sqrt{c_{n}c_{-n}},
\end{align*}
where the branch of the root is immaterial. Each root $\xi$ of $\det S_n$ is either a root of $g_+$ or $g_-$, respectively, and thus satisfies $\xi = \sg_{n} + a_n(\xi) \pm \ph_n(\xi)$. To estimate the distance of the two roots $\xi_\pm$, we note that $a_n$ is analytic on $U_n$, so Cauchy's estimate gives
\[
  \abs{\partial_\lambda a_n}_{D_n}
   \le
  \frac{\abs{a_n}_{U_n}}{\dist(D_n, \partial U_n)}
   \le
  \frac{\n{q}_{0}}{12 n-4\n{q}_{0}} \le \frac{1}{8}.
\]
The claim now follows with
\begin{align*}
  \abs{\xi_+-\xi_-}
   &\le
  \abs{a_n(\xi_+)-a_n(\xi_-)} + \abs{\ph_n(\xi_+)\pm\ph_n(\xi_-)}\\
   &\le
  \frac{1}{8}\abs{\xi_+-\xi_-} + 2\abs{\ph_n}_{U_n}.\qed
\end{align*}

\end{proof}

The coefficients $c_{\pm n}$ have the same decay as the Fourier coefficients of $q_{\pm n}$, hence the regularity of the potential is reflected in the decay of its gap lengths.

\begin{prop}
\label{gap-est}
If $q\in\Hs_{0,\C}^w$, then for all $N\ge 4\n{q}_w$,
\[
  \sum_{n\ge N} w_{2n}^2\abs{\gm_{n}(q)}^{2} \le 9\n{\Rc_{N}q}_{w}^{2} + \frac{576}{N}\n{q}_{w}^{4},
\]
and further $w_{2n}\abs{\gm_n} \le 6\n{q}_w$ for all $n \ge N$.\fish
\end{prop}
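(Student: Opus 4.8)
The plan is to combine the estimate $\abs{\xi_+-\xi_-}^2\le9\abs{c_nc_{-n}}_{U_n}$ of the preceding lemma, which in view of $\gm_n^2=(\xi_+-\xi_-)^2$ amounts to $\abs{\gm_n}^2\le9\abs{c_nc_{-n}}_{U_n}$ for $n\ge4\n{q}_0$, with the comparison of the reduced coefficients $c_{\pm n}$ to the Fourier coefficients $q_{\pm n}$ of $q$ supplied by Lemmas~\ref{Tn-est} and \ref{coeff-est}. Since $w_k\ge1$ we have $\n{q}_w\ge\n{q}_0$, so for any $N\ge4\n{q}_w$ this applies to every $n\ge N$, giving
\[
  w_{2n}^2\abs{\gm_n}^2\le9\,w_{2n}^2\abs{c_nc_{-n}}_{U_n}
   \le9\,\bigl(w_{2n}\abs{c_n}_{U_n}\bigr)\bigl(w_{2n}\abs{c_{-n}}_{U_n}\bigr).
\]

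First I would control each factor on the right. By Lemma~\ref{coeff-est} together with the contraction bound $\n{T_n}_{w;\pm n}\le2\n{q}_w/n$ of Lemma~\ref{Tn-est},
\[
  w_{2n}\abs{c_{\pm n}-q_{\pm n}}_{U_n}\le2\n{T_n}_{w;\pm n}\n{q}_w\le\frac{4}{n}\n{q}_w^2 .
\]
Setting $a_n=w_{2n}\abs{q_n}$, $b_n=w_{2n}\abs{q_{-n}}$ and $\epsilon_n=4\n{q}_w^2/n$, the triangle inequality gives $w_{2n}\abs{c_n}_{U_n}\le a_n+\epsilon_n$ and $w_{2n}\abs{c_{-n}}_{U_n}\le b_n+\epsilon_n$, whence by $xy\le(x^2+y^2)/2$ and $(x+y)^2\le2x^2+2y^2$ one gets the pointwise bound
\[
  w_{2n}^2\abs{c_nc_{-n}}_{U_n}\le a_n^2+b_n^2+2\epsilon_n^2 .
\]

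It then remains to sum over $n\ge N$. By symmetry of the weight $\sum_{n\ge N}(a_n^2+b_n^2)=\sum_{\abs{n}\ge N}w_{2n}^2\abs{q_n}^2=\n{\Rc_{N}q}_w^2$, while $\sum_{n\ge N}\epsilon_n^2=16\n{q}_w^4\sum_{n\ge N}n^{-2}\le32\n{q}_w^4/N$, so that after multiplying by $9$ (and $18\cdot32=576$) one obtains $\sum_{n\ge N}w_{2n}^2\abs{\gm_n}^2\le9\n{\Rc_{N}q}_w^2+576\n{q}_w^4/N$. For the second assertion I would note that $a_n^2$ and $b_n^2$ are individual summands of $\n{q}_w^2$, hence $\le\n{q}_w^2$, and that $N\ge4\n{q}_w$ forces $\epsilon_n^2=16\n{q}_w^4/n^2\le16\n{q}_w^4/N^2\le\n{q}_w^2$; thus $w_{2n}^2\abs{\gm_n}^2\le9\cdot4\n{q}_w^2$ and $w_{2n}\abs{\gm_n}\le6\n{q}_w$ for all $n\ge N$, the case $q=0$ being trivial.

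Everything past the preceding lemmas is essentially bookkeeping, so there is no genuine obstacle; the one point that needs care is the way the product $\abs{c_nc_{-n}}$ is split. It has to be arranged so that the leading part reproduces the tail norm $\n{\Rc_{N}q}_w^2$ with constant exactly one---otherwise the factor $9$ coming from the root estimate would be inflated---while the contributions carrying $\epsilon_n$ collapse into a remainder of order $N^{-1}\n{q}_w^4$. Applying $(x+y)^2\le2x^2+2y^2$ directly to $(a_n+\epsilon_n)(b_n+\epsilon_n)$, rather than a crude Cauchy--Schwarz on the cross term (which would only yield the weaker $N^{-1/2}\n{q}_w^3$), is precisely what delivers the correct remainder, and $\sum_{n\ge N}n^{-2}\le2/N$ together with $N\ge4\n{q}_w\ge4\n{q}_0$ closes the estimate.
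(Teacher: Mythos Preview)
Your proof is correct and follows essentially the same route as the paper: bound $\abs{\gm_n}^2$ by $9\abs{c_nc_{-n}}_{U_n}$, compare $c_{\pm n}$ to $q_{\pm n}$ via Lemmas~\ref{Tn-est} and~\ref{coeff-est}, and use $(x+y)^2\le 2x^2+2y^2$ to separate the Fourier tail from the $O(n^{-2}\n{q}_w^4)$ remainder before summing. The only cosmetic difference is that the paper applies $\abs{c_nc_{-n}}\le\tfrac12\abs{c_n}^2+\tfrac12\abs{c_{-n}}^2$ before squaring while you multiply the two factors and then apply the same AM--GM step; the resulting pointwise bound $a_n^2+b_n^2+2\ep_n^2$ is identical. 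You also spell out the argument for the pointwise estimate $w_{2n}\abs{\gm_n}\le 6\n{q}_w$, which the paper leaves to the reader.
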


\begin{proof}
The preceding lemma gives for $n \ge 4\n{q}_w$
\[
  \abs{\gm_{n}}^{2} = \abs{\xi_{+}-\xi_{-}}^{2}
   \le 9\abs{c_{n}c_{-n}}
   \le \frac{9}{2}\abs{c_{n}}^{2}_{U_{n}} + \frac{9}{2}\abs{c_{-n}}^{2}_{U_{n}},
\]
and by lemmas \ref{Tn-est} and \ref{coeff-est}
\[
  w_{2n}\abs{c_{n}}
   \le w_{2n}\abs{q_{n}} + 2\n{T_{n}}_{w;-n}\n{q}_{w}
   \le w_{2n}\abs{q_{n}} + \frac{4}{n}\n{q}_{w}^{2}.
\]
Consequently,
\[
  \frac{1}{9}w_{2n}^2\abs{\gm_{n}}^{2}
   \le w_{2n}^2\abs{q_{n}}^2+w_{2n}^2\abs{q_{-n}}^2
   + \frac{32}{n^{2}}\n{q}_{w}^{4},
\]
and summing up gives the claim.\qed
\end{proof}

\begin{cor}
If $q\in\Hs_{0,\C}^w$ is real-valued, then
\[
  \sum_{n\ge 1} w_{2n}^2\abs{\gm_{n}(q)}^{2}
   \le
   2^{18}\p*{ \p*{w(8\n{q}_w)}^2\n{q}_w^{2} + \n{q}_w }\n{q}_w^{2}
   .\fish
\]
\end{cor}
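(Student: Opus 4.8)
The plan is to split the sum at the cutoff $N$, the smallest positive integer with $N\ge 4\n{q}_w$ (so $N-1<4\n{q}_w$, hence $2(N-1)<8\n{q}_w$), and to treat the two frequency ranges by completely different means. For the high modes I would apply Proposition~\ref{gap-est} verbatim — legitimate since $N\ge 4\n{q}_w$ — and, using $\n{\Rc_N q}_w\le\n{q}_w$ together with the case distinction $\n{q}_w\ge 1/4$ (so $576/N\le 144/\n{q}_w$) versus $\n{q}_w<1/4$ (so $N=1$ and $\n{q}_w^4\le\frac14\n{q}_w^3$), obtain
\[
  \sum_{n\ge N} w_{2n}^2\abs{\gm_n}^2 \le 9\n{q}_w^2 + 144\n{q}_w^3 .
\]
If $\n{q}_w<1/4$ the sum in the corollary is just this tail and nothing more is needed, so I may assume $N\ge 2$ from here on.

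For the low modes $1\le n<N$ the Lyapunov--Schmidt reduction behind Proposition~\ref{gap-est} is unavailable, and a single gap $\gm_n$ may already be of size $O(\n{q}_w^2)$, so summing the $\sim\n{q}_w$ many such terms one by one would cost a spurious power of $\n{q}_w$. Here real-valuedness of $q$ is decisive: $L(q)$ is then self-adjoint, its periodic eigenvalues are real and interlaced, $\lm_0^+\le\lm_1^-\le\lm_1^+\le\dotsb$, every $\gm_n=\lm_n^+-\lm_n^-\ge 0$, and the closed intervals $[\lm_n^-,\lm_n^+]$, $1\le n<N$, are pairwise disjoint subintervals of $[\lm_0^+,\lm_{N-1}^+]$. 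Therefore $\sum_{1\le n<N}\gm_n\le\lm_{N-1}^+-\lm_0^+$, and since all $\gm_n\ge 0$,
\[
  \sum_{1\le n<N}\gm_n^2 \le \Bigl(\sum_{1\le n<N}\gm_n\Bigr)^2 \le \bigl(\lm_{N-1}^+-\lm_0^+\bigr)^2 .
\]
Proposition~\ref{ev-as} and Lemma~\ref{lower-b} bound the edges: $\lm_0^+\ge-(1+\n{q}_0)\n{q}_0$, and $\lm_{N-1}^+\le 256\n{q}_0^2$ if $N-1<4\n{q}_0$ while $\lm_{N-1}^+\le(N-1)^2\pi^2+4\n{q}_0<16\pi^2\n{q}_w^2+4\n{q}_0$ otherwise; since $\n{q}_0\le\n{q}_w$ this yields $\lm_{N-1}^+-\lm_0^+\le c_0(\n{q}_w^2+\n{q}_w)$ with an absolute $c_0$. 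Finally, monotonicity of the piecewise linearly extended weight gives $w_{2n}\le w\bigl(2(N-1)\bigr)\le w(8\n{q}_w)$ for $1\le n<N$, so that
\[
  \sum_{1\le n<N} w_{2n}^2\abs{\gm_n}^2 \le \bigl(w(8\n{q}_w)\bigr)^2 c_0^2 \bigl(\n{q}_w^2+\n{q}_w\bigr)^2 .
\]

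Adding the high- and low-mode contributions and using $w\ge 1$ to absorb the pure powers of $\n{q}_w$ from the tail into the $\bigl(w(8\n{q}_w)\bigr)^2$-term, I would then collect everything into the asserted bound, enlarging the constant to the advertised absolute value.

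\emph{Main obstacle.} The delicate step is the low-frequency range: no estimate of $\gm_n$ linear in a weighted Fourier norm is available for $n\lesssim\n{q}_w$, so one is thrown back onto the coarse localization of the periodic spectrum, and the only device that keeps this lossless is to add the gaps as disjoint intervals rather than individually — which is precisely where the real-valuedness of $q$, hence reality and interlacing of the periodic spectrum, is used.
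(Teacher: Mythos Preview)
Your proposal is correct and follows essentially the same approach as the paper: split at the smallest integer $N\ge 4\n{q}_w$, apply Proposition~\ref{gap-est} to the tail $n\ge N$, and for $1\le n<N$ use real-valuedness to bound $\sum_{n<N}\gm_n\le\lm_{N-1}^+-\lm_0^+$ together with the monotonicity bound $w_{2n}\le w(8\n{q}_w)$. The paper is terser in the low-mode step (it simply quotes $\lm_{N-1}^+-\lm_0^+\le 261\n{q}_w^2$ without your case distinction $N-1\lessgtr 4\n{q}_0$), but the structure and the key use of interlacing are identical.
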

\begin{proof}
Choose $N\ge1$ with $N \ge 4\n{q}_{w} > N-1$, and consider the splitting,
\[
  \sum_{n\ge 1} w_{2n}^{2}\abs{\gm_{n}}^{2} =
  \sum_{1\le n < N} w_{2n}^{2}\abs{\gm_{n}}^{2}
  +
  \sum_{n\ge N} w_{2n}^{2}\abs{\gm_{n}}^{2}.
\]
For the first term we use $\abs{\lm_{N-1}^{+} - \lm_{0}^{+}} \le 261\n{q}_{w}^{4}$, to obtain
\begin{align*}
  \sum_{1\le n < N} w_{2n}^{2}\abs{\gm_{n}}^{2}
  &\le
  w_{2N-2}^2 \paren[\Bigg]{\sum_{1\le n < N} \abs{\gm_{n}}}^{2}\\
  &\le
  \p*{w(8\n{q}_w)}^2\n{q}_w^{2} \abs{\lm_{N-1}^{+}-\lm_{0}^{+}}^{2}\\
  &\le
  (261)^2 \p*{w(8\n{q}_w)}^2\n{q}_w^{2} \n{q}_{w}^{4}.
\end{align*}
The second term can be bounded by Proposition \ref{gap-est},
\[
  \sum_{n\ge N} w_{2n}^2\abs{\gm_{n}}^{2} \le (9+144\n{q}_{w})\n{q}_{w}^{2}.\qed
\]
\end{proof}

\section{Appendix - Asymptotic expansion of the discriminant}
\label{a:dl-exp}

In this appendix we obtain an asymptotic expansion of the discriminant $\Dl(\lm) = \Dl(\lm,q)$ of $-\partial_{x}^{2} + q$ for real-valued potentials in $\Hs_{0}^{N}$ as $\lm\to \infty$ along the real axis. Such an expansion is needed in Section~\ref{s:setup}. It is based on special solutions of $-y'' + qy = \lm y$ studied in \cite{Kappeler:2012fa}.

To state the asymptotics for $\Dl(\lm)$ we first need to recall the definition of the \kdv hierarchy -- cf. \cite[p. 209]{Kappeler:2003up}
\[
  \H_{0} = \frac{1}{2}\int_{\T} q^{2}\,\dx,
  \qquad
  \H_{1} = \frac{1}{2}\int_{\T} ((\partial_{x}q)^{2} + 2q^{3})\,\dx,
  \qquad \dotsc.
\]
For $m\ge 0$ arbitrary, one has
\begin{equation}
  \label{Hm-Sm}
  \H_{m} = \frac{(-1)^{m+1}}{2} S_{2m+3},
  \qquad
  S_{n} \defl \int_{0}^{1} s_{n}(x)\,\dx,
\end{equation}
with $s_{n}$, $n\ge 1$, given recursively as in \cite[Section 2]{Kappeler:2012fa} by $s_{1} = q$, $s_{2} = -\partial_{x}q$ and (if $N\ge 2$),
\begin{equation}
  \label{sm}
  s_{n+1} = -\partial_{x}s_{n} - \sum_{k=1}^{n-1} s_{n-k}s_{k},\qquad 1\le n\le N.
\end{equation}
According to \cite{Kappeler:2012fa} the $s_{n}(x)$ are 1-periodic in $x$, $S_{2m} = 0$ for all $m\ge 1$, and
\begin{equation}
  \label{Sm}
  S_{1} = \int_{\T} q\,\dx,\quad
  S_{3} = -\int_{\T} q^{2}\,\dx,\quad
  S_{5} = \int_{\T} ((\partial_{x}q)^{2} + 2q^{3})\,\dx,\quad
  \dotsc.
\end{equation}

\begin{lem}
\label{dl-exp}
Suppose $q\in \Hs_0^{N}$ with $N\ge 0$, then
\[
  \Dl(\nu^{2}) = 2\cos\Th_{N}(\nu) + O(\nu^{-N-1}),
  \qquad
  \Th_{N}(\nu) \defl \nu - \sum_{3\le 2m+3 \le N} \frac{\H_{m}}{4^{m+1}\nu^{2m+3}},
\]
uniformly in $\nu$ on $\setdef{\nu\in \C}{\abs{\nu}\ge \rho,\quad \abs{\Im \nu}\le \tau}$ with $\rho$, $\tau > 0$ and uniformly in $q$ on bounded subsets of $\Hs_{0}^{N}$.\fish
\end{lem}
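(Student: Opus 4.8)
The plan is to recover $\Dl(\nu^2)$ from a basis of Floquet solutions of $-y'' + qy = \nu^2 y$ and to read the expansion off their logarithmic derivatives. For $q\in\Hs_0^N$ and $\abs{\nu}$ large with $\abs{\Im\nu}\le\tau$, the special solutions constructed in \cite{Kappeler:2012fa} have the WKB form
\[
  z_{\pm}(x,\nu) = \exp\p*{\pm\ii\nu x + \int_0^x \zt_{\pm}(t,\nu)\,\dt},
\]
where $\zt_{\pm}(\cdot,\nu)$ is the $1$-periodic solution of the Riccati equation $\zt_{\pm}' = q - \zt_{\pm}^2 \mp 2\ii\nu\,\zt_{\pm}$ and admits, uniformly on $\{\,\abs{\nu}\ge\rho,\ \abs{\Im\nu}\le\tau\,\}$ and on bounded subsets of $\Hs_0^N$, the asymptotic expansion
\[
  \zt_{\pm}(x,\nu) = \sum_{1\le n\le N}\frac{s_n(x)}{(\pm 2\ii\nu)^n} + O(\nu^{-N-1}),
\]
with the $s_n$ of \eqref{sm} (this is where the regularity $q\in\Hs_0^N$ enters). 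Since $q$ and the $s_n$ are $1$-periodic, the $z_{\pm}$ are Floquet solutions with multipliers $\rho_{\pm} = z_{\pm}(1)/z_{\pm}(0) = \exp\p*{\pm\ii\nu + \int_0^1\zt_{\pm}\,\dx}$, and as $\{z_+,z_-\}$ is a basis of the solution space for $\abs{\nu}$ large, the monodromy matrix is diagonal in this basis, whence $\Dl(\nu^2) = \rho_+ + \rho_-$.

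Next I integrate the expansion over a period. With $S_n = \int_0^1 s_n\,\dx$ one gets $\int_0^1\zt_{\pm}\,\dx = \sum_{1\le n\le N} S_n/(\pm 2\ii\nu)^n + O(\nu^{-N-1})$. Now $S_1 = \int_\T q\,\dx = 0$ because $[q]=0$, and $S_{2m}=0$ for all $m\ge 1$ by \cite{Kappeler:2012fa}, so only the odd indices $n = 2m+3\ge 3$ contribute. For these, $(\pm 2\ii\nu)^{2m+3} = \pm 2\ii(-1)^{m+1}4^{m+1}\nu^{2m+3}$, so inserting $\H_m = \frac{(-1)^{m+1}}{2}S_{2m+3}$ from \eqref{Hm-Sm} gives
\[
  \int_0^1\zt_{\pm}\,\dx = \mp\ii\sum_{3\le 2m+3\le N}\frac{\H_m}{4^{m+1}\nu^{2m+3}} + O(\nu^{-N-1}) = \mp\ii\bigl(\nu - \Th_N(\nu)\bigr) + O(\nu^{-N-1}),
\]
by the very definition of $\Th_N$. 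Hence $\pm\ii\nu + \int_0^1\zt_{\pm}\,\dx = \pm\ii\,\Th_N(\nu) + O(\nu^{-N-1})$; since only odd powers occur, the first discarded term is indeed $O(\nu^{-N-1})$ (in fact $O(\nu^{-N-2})$ when $N$ is even), and the empty-sum case $N\le 2$ is covered automatically.

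Finally I exponentiate. On $\{\,\abs{\nu}\ge\rho,\ \abs{\Im\nu}\le\tau\,\}$ we have $\abs{\e^{\pm\ii\nu}} = \e^{\mp\Im\nu}\le\e^{\tau}$, while $\int_0^1\zt_{\pm}\,\dx$ and $\ii(\Th_N(\nu)-\nu)$ are both $O(\nu^{-1})$ and hence bounded; since $w\mapsto\e^{w}$ is Lipschitz on bounded sets, this yields
\[
  \e^{\pm\ii\nu + \int_0^1\zt_{\pm}\,\dx} = \e^{\pm\ii\Th_N(\nu)} + O(\nu^{-N-1})
\]
uniformly in the stated region and on bounded subsets of $\Hs_0^N$. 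Adding the two contributions gives $\Dl(\nu^2) = \e^{\ii\Th_N(\nu)} + \e^{-\ii\Th_N(\nu)} + O(\nu^{-N-1}) = 2\cos\Th_N(\nu) + O(\nu^{-N-1})$, which is the claim. The one step of substance — and the main obstacle — is the construction of the solutions $z_{\pm}$ together with the \emph{uniform} remainder bound for $\zt_{\pm}$; this input is taken from \cite{Kappeler:2012fa}, and everything afterwards is the bookkeeping of powers of $2\ii\nu$ together with the vanishing of $S_1$ and of the even traces $S_{2m}$.
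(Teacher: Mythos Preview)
Your argument is correct and follows essentially the same strategy as the paper: both rely on the special solutions of \cite{Kappeler:2012fa} and the identities $S_1=0$, $S_{2m}=0$, $\H_m=\frac{(-1)^{m+1}}{2}S_{2m+3}$ to reduce $\Dl(\nu^2)$ to $2\cos\Th_N(\nu)+O(\nu^{-N-1})$. The one difference lies in how the discriminant is recovered from the special solutions. You invoke Floquet theory directly, asserting that $\zt_{\pm}$ is the exact $1$-periodic Riccati solution so that $z_{\pm}$ are Floquet solutions, the monodromy is diagonal in this basis, and $\Dl=\rho_++\rho_-$. The paper instead takes the solutions in the form $z_N(x,\nu)=w_N(x,\nu)+r_N(x,\nu)/(2\ii\nu)^{N+1}$, where $w_N=\exp\int_0^x\al_N$ is only the \emph{truncated} WKB ansatz and $r_N,\partial_x r_N=O(1)$, without claiming these are Floquet; it then expresses the fundamental solutions $y_1,y_2$ explicitly in the basis $\{z_N(\cdot,\nu),z_N(\cdot,-\nu)\}$ and computes $\Dl=y_1(1,\nu^2)+y_2'(1,\nu^2)=w_N(1,\nu)+w_N(1,-\nu)+O(\nu^{-N-1})$ by hand. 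Your route is shorter and more conceptual, but it leans on the exact periodicity of $\zt_\pm$, which is a stronger structural input than what the paper extracts from \cite{Kappeler:2012fa}; the paper's computation through $y_1,y_2$ works for any linearly independent pair of solutions with the stated remainder bounds. The remaining bookkeeping --- the sign and power calculation for $(2\ii\nu)^{2m+3}$, and the Lipschitz exponentiation on the strip $\abs{\Im\nu}\le\tau$ --- is identical in both arguments.
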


\begin{proof}
In~\cite{Kappeler:2012fa}, special solutions of $-y'' + qy = \lm y$ with $\lm = \nu^{2}$ of the following form have been studied:
\[
  z_{N}(x,\nu) = w_{N}(x,\nu) + \frac{r_{N}(x,\nu)}{(2\ii \nu)^{N+1}},
\]
with
\begin{equation}
  \label{wN}
  w_{N}(x,\nu) \defl \exp\left(\int_{0}^{x} \al_{N}(s,\nu)\,\ds\right),
  \quad
  \al_{N}(x,\nu) \defl \ii\nu + \sum_{1\le m\le N} \frac{s_{m}(x)}{(2\ii\nu)^{m}},
\end{equation}
where the $s_{m}$ are given in \eqref{sm}. The error terms $r_{N}(x,\nu)$ satisfy the estimates
\begin{equation}
  \label{rN-bound}
  r_{N}(x,\nu),\; \partial_{x}r_{N}(x,\nu) = O(1),
\end{equation}
uniformly in $0\le x \le 1$, $\nu\in\C$ with $\abs{\Im \nu} \le \tau$, and uniformly on bounded subsets of $\Hs_{0}^{N}$. Given $\tau > 0$ there exists $\rho > 0$ such that for $\abs{\nu} \ge \rho$,
\[
  \xi_{N}(\nu) = \al_{N}(0,\nu)-\al_{N}(0,-\nu)
\]
is uniformly bounded away from zero. This implies that the solutions $z_{N}(x,\nu)$ and $z_{N}(x,-\nu)$ are linearly independent. The fundamental solutions $y_{1}(x,\nu^{2})$, $y_{2}(x,\nu^{2})$ of $-y'' + qy = \nu^{2}y$ can then be expressed as
\begin{align*}
  y_{1}(x,\nu^{2}) &= \frac{1}{\xi_{N}(\nu)}(\al_{N}(0,\nu)z_{N}(x,-\nu) - \al_{N}(0,-\nu)z_{N}(x,\nu)),\\
  y_{2}(x,\nu^{2}) &= \frac{1}{\xi_{N}(\nu)}(z_{N}(x,\nu) - z_{N}(x,-\nu)),
\end{align*}
and it follows from \eqref{rN-bound} that
\[
  \xi_{N}(\nu)y_{1}(x,\nu^{2})
   = \al_{N}(0,\nu)w_{N}(x,-\nu) - \al_{N}(0,-\nu)w_{N}(x,\nu) + O(\nu^{-N-1}).
\]
Note that
\[
  z_{N}'(x,\nu) = \al_{N}(x,\nu)w_{N}(x,\nu) + \frac{r_{N}'(x,\nu)}{(2\ii \nu)^{N+1}},
\]
and hence
\[
  \xi_{N}(\nu)y_{2}'(x,\nu^{2})
   = \al_{N}(x,\nu)w_{N}(x,\nu) - \al_{N}(x,-\nu)w_{N}(x,-\nu) + O(\nu^{-N-1}).
\]
Since $\al_{N}(x,\nu)$ is 1-periodic in $x$, and $\xi_{N}(\nu)$ is uniformly bounded away from zero for $\abs{\nu} > \rho$, the discriminant $\Dl(\nu^{2}) = y_{1}(1,\nu^{2}) + y_{2}'(1,\nu^{2})$ can be computed as
\[
  \Dl(\nu^{2}) =
  w(1,\nu) + w(1,-\nu)
  + O(\nu^{-N-1}).
\]
By definition \eqref{wN} of $w_{N}$ we have $w_{N}(1,\nu) + w_{N}(1,-\nu) = 2\cos(\Th_{N}(\nu))$, where
\[
  \Th_{N}(\nu) = \nu - \ii \sum_{1\le m\le N} \frac{S_{m}}{(2\ii \nu)^{m}}.
\]
Since all $S_{2m}$ vanish, this formula for $\Th_{n}$ simplifies,
\[
  \Th_{N}(\nu) = \nu - \sum_{1\le 2k+1\le N} \frac{(-1)^{k}\frac{1}{2}S_{2k+1}}{4^{k}\nu^{2k+1}},
\]
and after changing the index of summation $k=n+1$ one concludes from \eqref{Hm-Sm} and the assumption $S_{1} = 0$ that
\[
  \Th_{N}(\nu) = \nu - \sum_{1\le 2n+3\le N} \frac{H_{n}}{4^{n+1}\nu^{2n+3}}
\]
as claimed. Going through the arguments of the proof one verifies that the claimed uniformity statements hold.\qed
\end{proof}


\section{\boldmath Appendix - Analyticity of $F$}
\label{a:ab-int}

In this appendix we prove analyticity properties of the functionals $F_{n}$, $n\ge 1$, introduced in Section~\ref{s:setup}. We use the notation established in Section~\ref{s:setup}. In particular, the open neighborhoods $\Wp$ and $\Wp_{q}$.

\begin{lem}
\label{F-ana}
Let $q \in \Wp$ and $n \ge 1$. Then for any $\nu \in \partial U_n$,
\[
  F_n(\nu) =  \frac{1}{2}\left(\int_{\lm_n^-}^\nu \om + \int_{\lm_n^+}^\nu \om\right),
  \qquad \om \defl \frac{\dDl}{\sqrt[c]{\Dl^{2}-4}}\,\dlm
\]
is analytic on $\Wp_q$.\fish
\end{lem}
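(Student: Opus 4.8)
\emph{Proof strategy.}
The stated equality is immediate from Lemma~\ref{w-closed}(iii): the integral of $\om$ along any admissible path from $\lm_n^-$ to $\lm_n^+$ vanishes, so $\int_{\lm_n^-}^\nu\om=\int_{\lm_n^-}^{\lm_n^+}\om+\int_{\lm_n^+}^\nu\om=\int_{\lm_n^+}^\nu\om=F_n(\nu)$, and the average of two equal quantities is that same quantity. The symmetric form is written precisely because it is manifestly invariant under the relabelling $\lm_n^-\leftrightarrow\lm_n^+$, and this is exactly the crux: near a collapsed gap the individual eigenvalues $\lm_n^\pm$ need not depend analytically on $p$, whereas the unordered pair $\{\lm_n^-,\lm_n^+\}$ — equivalently $\tau_n=(\lm_n^-+\lm_n^+)/2$ and $\gm_n^2=(\lm_n^+-\lm_n^-)^2$, which together with $\lm_n^\ld$ are analytic on all of $\Wp_q$ — always does. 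So the content is the analyticity of $p\mapsto F_n(\nu,p)$ on $\Wp_q$.

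\emph{Analyticity off the collapsed-gap locus, and a uniform bound.}
Put $\Wp_q^{(n)}\defl\{p\in\Wp_q:\gm_n(p)\neq0\}$; this set is open, and dense because $\gm_n^2$ is analytic on $\Wp_q$ and not identically zero (gaps open up under arbitrarily small real perturbations). On $\Wp_q^{(n)}$ the $\lm_n^\pm$ are simple roots of $\Dl^2-4$ in $U_n$, hence analytic in $p$, and so is $\gm_n$; with the analyticity of $\chi_n$ on $U_n\times\Wp_q$ this makes $\om$ an analytic family of $1$-forms. The substitution $\lm=\tau_n+\tfrac{\gm_n}{2}\cosh t$, under which $\vs_n(\lm)=-\tfrac{\gm_n}{2}\sinh t$ and $\om=-\tfrac1{2\ii}\bigl(\lm_n^\ld-\tau_n-\tfrac{\gm_n}{2}\cosh t\bigr)\chi_n\bigl(\tau_n+\tfrac{\gm_n}{2}\cosh t\bigr)\,\dt$, absorbs the integrable $(\lm-\lm_n^+)^{-1/2}$-singularity at $\lm_n^+$ into the Jacobian and yields
\[
  F_n(\nu)=-\frac1{2\ii}\int_0^{t_\nu}\Bigl(\lm_n^\ld-\tau_n-\tfrac{\gm_n}{2}\cosh t\Bigr)\chi_n\!\Bigl(\tau_n+\tfrac{\gm_n}{2}\cosh t\Bigr)\,\dt,\qquad\cosh t_\nu=\frac{2(\nu-\tau_n)}{\gm_n},
\]
with $t_\nu$ and the integrand analytic in $(t,p)$ on $\Wp_q^{(n)}$; hence $F_n(\nu,\cdot)$ is analytic on $\Wp_q^{(n)}$. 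For the extension it suffices to bound $F_n$: for $p$ in a bounded $B\subset\Wp_q$ and $\nu\in\partial U_n$ fixed one has $|\chi_n|\le M(B)$ on $U_n$, and since $\lm_n^\ld-\lm$ is $O(\gm_n)$ on the part of an admissible path within distance $\gm_n$ of the gap, the elementary estimate $|\lm_n^\ld-\lm|/|\vs_n(\lm)|\le\sqrt{1+\gm_n/s}$ with $s=|\lm-\lm_n^+|$ gives $\int_{\lm_n^+}^\nu|\om|\le C(B)$, uniformly in $\gm_n$. Thus $F_n(\nu,\cdot)$ is locally bounded on all of $\Wp_q$.

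\emph{Conclusion.}
A locally bounded function on an open subset of the complex Banach space $\Hs^0_{0,\C}$ is analytic as soon as its restriction to every complex line is analytic, so fix such a line $\ell$, parametrised by $p(s)$. The function $s\mapsto\gm_n^2(p(s))$ is analytic; if it is not identically zero then, by the previous step, $s\mapsto F_n(\nu,p(s))$ is analytic off the discrete set $\ell\cap\{\gm_n=0\}$ and bounded there, hence analytic by Riemann's removable-singularity theorem. If instead $\gm_n\equiv0$ along $\ell$, then along $\ell$ the factor $(\lm_n^\ld-\lm)/\vs_n(\lm)$ in \eqref{om} equals $1$, so $\om=\tfrac1{2\ii}\chi_n(\lm)\,\dlm$ is outright analytic in $(\lm,p)$ while $\lm_n^+$ is the analytic double eigenvalue, whence $F_n(\nu,p(\cdot))=\tfrac1{2\ii}\int_{\lm_n^+}^\nu\chi_n(\lm)\,\dlm$ is analytic. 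In either case $F_n(\nu,\cdot)|_\ell$ is analytic, and the lemma follows. The one genuine obstacle is controlling $F_n$ across the collapsed-gap locus $\{\gm_n=0\}$ — where, as noted, the whole difficulty of $\lm_n^+$ being a double eigenvalue resides — and it is handled by the uniform bound together with the removable-singularity argument; everything else is routine.
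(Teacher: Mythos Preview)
Your approach is sound and genuinely different from the paper's. Both proofs first establish analyticity off the collapsed-gap locus $Z_n=\{\gm_n=0\}$, but they extend across $Z_n$ by different mechanisms. The paper proves continuity at points of $Z_n$ via a careful dominated-convergence argument, checks weak analyticity of the restriction $F_n|_{Z_n}$ separately, and then invokes a dedicated extension criterion \cite[Theorem~A.6]{Grebert:2014iq}. You instead aim for local boundedness and use the standard fact that a locally bounded, G\^ateaux-holomorphic map on a complex Banach space is analytic; the line-by-line dichotomy (generic line versus line contained in $Z_n$) together with Riemann's removable-singularity theorem is clean and self-contained. Your route avoids the external reference, while the paper's continuity argument yields a bit more, namely an explicit limiting formula for $F_n$ at collapsed gaps.

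There is, however, one genuine gap in your uniform bound. The assertion that ``$\lm_n^\ld-\lm$ is $O(\gm_n)$ on the part of an admissible path within distance $\gm_n$ of the gap'' amounts to $|\lm_n^\ld-\tau_n|=O(|\gm_n|)$ locally uniformly on $\Wp_q$. For real $q$ this is immediate from $\lm_n^-\le\lm_n^\ld\le\lm_n^+$, but for complex $q\in\Wp_q$ it is not obvious and your sketch does not supply it. Without this estimate the argument fails: if $|\lm_n^\ld-\lm_n^+|$ stayed bounded away from zero while $\gm_n\to 0$, the integrand would behave like $c/\sqrt{s\,|\gm_n|}$ near $\lm_n^+$ (with $s=|\lm-\lm_n^+|$), and $\int_0^\delta c/\sqrt{s\,|\gm_n|}\,\ddd s=2c\sqrt{\delta/|\gm_n|}$ diverges. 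The needed estimate is true---indeed the sharper $\lm_n^\ld-\tau_n=O(\gm_n^2)$ holds locally uniformly, see \cite[Proposition~B.13]{Kappeler:2003up}, which the paper itself invokes in its continuity step---so the fix is simply to cite it. Once that is in place, your inequality $|\lm_n^\ld-\lm|/|\vs_n(\lm)|\le C\sqrt{1+|\gm_n|/s}$ along a path leaving $\lm_n^+$ in the direction of $\gm_n$ is correct and the uniform bound follows as you indicate.
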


\begin{rem*}
By the same line of arguments one obtains the analyticity of $F_{n}$ when $\Ws$ is extended to the open neighborhood of $\Hs^{-1}_0$ within $\Hs^{-1}_{0,\C}$ constructed in \cite{Kappeler:2005fb}, and  $\Ws_q$ is chosen as an open neighborhood of $q$ within $\Hs^{-1}_{0,\C}$ such that a common set of isolating neighborhoods exists.
\end{rem*}

\begin{proof}
We want to apply \cite[Theorem A.6]{Grebert:2014iq}. First note that for any $\mu, \nu \in \partial U_n$, $\int_\nu^\mu \om$ is analytic on $\Wp_q$ by Lemma~\ref{w-closed}. Hence at each step of the proof we might change $\nu$ at our convenience.

In a first step we prove that $F_n$ is analytic on $\Wp_q \setminus Z_n$, where $Z_{n} = \setdef{p\in \Wp}{\gm_{n}^{2}(p) = 0}$. Note that $\gm_{n}^{2}$ is analytic on $\Wp$ thus $Z_{n}$ is an analytic subvariety of $\Wp$. Fix $p_0 \in \Wp_q \setminus Z_n$. Recall that $\lm_n^-$, $\lm_n^+$ are lexicographically ordered and might therefore not even be continuous near $p_0$. However, since the eigenvalues $\lm_n^+(p_0)$, $\lm_n^-(p_0)$ are both simple, there exist a neighborhood $\Vs\subset \Wp_q \setminus Z_n$ of $p_0$ and two analytic functions $\rho_1, \rho_2\colon \Vs \to \C$ satisfying the set inequality $\setd{ \rho_1, \rho_2 } = \setd{ \lm_n^-, \lm_n^+ }$ on $\Vs$ and in addition
\begin{align*}
  & \abs{\rho_{1} - \lm_n^-(p_0)} <
    \frac{1}{4}\min\left[
      \dist\p*{\lm_{n}^{-}(p_{0}}, \partial U_{n}), \abs{\gm_{n}(p_{0})}
    \right], \\
  & \abs{\rho_{2} - \lm_n^+(p_0)} <
    \frac{1}{4}\min\left[
      \dist\p*{\lm_{n}^{+}(p_{0}}, \partial U_{n}), \abs{\gm_{n}(p_{0})}
    \right].
\end{align*}
In particular, $\rho_1(p_0) = \lm_n^-(p_0)$ and $\rho_2(p_0) = \lm_n^+(p_0)$. Let $\nu_1$, $\nu_2 \in \partial U_n$ be elements of the form $\nu_1 = \lm_n^-(p_0)  - \sigma_1 \gm_n(p_0)$, $\nu_2 = \lm_n^+(p_0)  + \sigma_2 \gm_n(p_0)$ with $\sigma_1$, $\sigma_2 >0$. Then for each $p \in \Vs$, the line segments $[\rho_1, \nu_1]$ and $[\rho_2, \nu_2]$ are admissible paths and
\[
  2 F_n(\nu) = \int_{\rho_1}^{\nu_1}\om + \int_{\nu_1}^{\nu}\om
   + \int_{\rho_2}^{\nu_2}\om + \int_{\nu_2}^{\nu}\om.
\]
As already noted above, $\int_{\nu_1}^{\nu}\om$ and $\int_{\nu_2}^{\nu}\om$
are both analytic on $\Wp_q$. We now prove that for $i=1,2$, $\int_{\rho_i}^{\nu_i}\om $ is analytic on $\Vs$. Since the cases $i=1$ and $i=2$ are treated in the same way we concentrate on $i=1$. With the parametrization $\lm_t =  \rho_1 + t(\nu_1 - \rho_1)$ one gets
\[
  \int_{\rho_1}^{\nu_1}\om
   = \frac{\nu_1 - \rho_1}{2\ii}
     \int_0^1 \, \chi_n(\lm_t) \frac{\lm_n^\ld - \lm_t}{\vs_{n}(\lm_{t})} \, \dt,
\]
where
\begin{equation}
  \label{chi-n}
    \chi_n(\lm)
   =\frac{1}{\sqrt[+]{\lm - \lm_{0}^{+}}} \, \prod_{1\le m\neq n} \frac{\lm_m^\ld - \lm}{\vs_{m}(\lm)}
\end{equation}
is analytic on $U_{n}\times \Wp_{q}$ -- see e.g. \cite[Appendix A]{Kappeler:2005fb}.
Taking into account that
\[
  (\tau_{n}-\lm_{t})^{2} - \gm_{n}^{2}/4
   = (\rho_1 - \lm_t)(\rho_2 - \lm_t)
   = t(\rho_1 - \nu_1) (\rho_2 - \lm_{t}),
\]
one obtains
\[
  \vs_{n}(\lm_{t})
   = (\tau_{n}-\lm_{t})\sqrt[+]{t}\sqrt[+]{\frac{(\rho_1 - \nu_1) (\rho_2 - \lm_{t})}{(\tau_{n}-\lm_{t})^{2}}}.
\]
Moreover, at $p_{0}$ we have for any $0\le t \le 1$,
\[
  \frac{(\rho_1 - \nu_1) (\rho_2 - \lm_{t})}{(\tau_{n}-\lm_{t})^{2}}
   = \frac{\sg_{1}(1+t\sg_{1})\gm_{n}^{2}}{(1/2+t\sg_{1})^{2}\gm_{n}^{2}}
   = \frac{\sg_{1}(1+t\sg_{1})}{(1/2+t\sg_{1})^{2}}
   \ge \frac{\sg_{1}}{1/2+\sg_{1}} > 0.
\]
Thus, after possibly shrinking $\Vs$, the mappings $(t,p)\mapsto \sqrt[+]{\frac{(\rho_1 - \nu_1) (\rho_2 - \lm_{t})}{(\tau_{n}-\lm_{t})^{2}}}$ and $(t,p) \mapsto \tau_n - \lambda_t$ are  continuous on $[0,1]\times \Vs$, analytic on $\Vs$ for every fixed $0\le t\le 1$, and uniformly bounded away from zero by some $c_{1} > 0$. Since $\chi_{n}$ is analytic on $U_{n}\times\Wp_{q}$, and $\rho_1, \rho_2$, and $\lm_n^\ld$ are analytic on $\Vs\subset\Wp_{q}$, it then follows that
\[
  \int_{\rho_1}^{\nu_1}\om
   = \frac{(\nu_1 - \rho_1)}{2\ii} \int_0^1 \, \frac{\chi_n(\lm_t) }{\tau_n - \lambda_t}
   \frac{\lm_n^\ld - \lm_t}{\sqrt[+]{\frac{(\rho_1 - \nu_1) (\rho_2 - \lm_{t})}{(\tau_{n}-\lm_{t})^{2}}}} \, \frac{\dt}{\sqrt[+]{t}},
\]
is analytic on $\Vs$ as well.

In a second step we show that the restriction of $F_n$ to $Z_n\cap \Wp_{q}$ is weakly analytic. Note that on $Z_n\cap \Wp_{q}$, $F_n$ coincides with the function $\frac{1}{2\ii}\int_{\tau_n}^\nu \chi_n(\lm) \, \dlm$ with $\chi_n$ given by \eqref{chi-n}, where the path of integration is chosen in $U_n$ but otherwise arbitrary. Thus $F_n\vert_{Z_n\cap \Wp_{q}}$ is weakly analytic.

In a third and final step we prove that $F_n$ is continuous on $\Wp_q$. By the considerations above, $F_n$ is continuous in each point of $\Wp_q \setminus Z_n$ and the restriction of $F_n$ to $Z_n\cap \Wp_{q}$ is continuous. Hence it remains to show that for any $p_{\infty} \in Z_n\cap \Wp_{q}$ and any sequence $(p_k)_{k \ge 1} \subset \Wp_q \setminus Z_n$ with $p_k \to p_{\infty}$ in $\Wp_q$ it follows that $F_n(p_k) \to F_n(p_{\infty})$ as $k \to \infty$. By \cite[Proposition~B.13]{Kappeler:2003up} (see also \cite[Proposition~2.9]{Kappeler:2005fb}) $\lm_n^\ld - \tau_n = O(\gm_{n}^{2})$ locally uniformly around $p_{\infty}$. Since $\gm_n(p_k)\to 0$ as $k \to \infty$, there exists $k_{0}\ge 1$ so that
\begin{equation}
  \label{lmd-tau-gm}
  \abs{\lm_n^\ld(p_{k}) - \tau_n(p_{k})} \le \abs{\gm_{n}(p_{k})}/2,\qquad k\ge k_{0}.
\end{equation}
Without loss of generality we may assume that $k_{0} = 1$.
For each $p_{k}$ consider the parametrization of $[\lm_{n}^{-},\tau_{n}]$, given by $[0,1]\to G_{n}$, $t\mapsto \lm_{t} =  \tau_n - t \gm_n/2$.
Then $\vs_n(\lm_t)^2 = -(1-t^2) \gm_n^2/4$ and thus
\[
  \abs*{\int_{\lm_n^-}^{\tau_n} \frac{\lm_n^\ld - \lm}{\vs_n(\lm)} \, \chi_n(\lm) \, \dlm}
  \le \int_0^1 \frac{\abs{\lm_n^\ld - \tau_n + t\gm_n/2}}{\sqrt[+]{1-t^2}} \, \abs{\chi_n(\lm_t)} \, \dt \le C \abs{\gm_n},
\]
where $C>0$ can be chosen independently of $k$. Since $\gm_n(p_k)\to 0$ as $k\to \infty$, we conclude
\begin{equation}
\label{A.6}
  F_n(\nu,p_k) = \frac{1}{2\ii}\int_{\tau_n}^{\nu } \frac{\lm_n^\ld - \lm}{\vs_n(\lm)} \, \chi_n(\lm) \, \dlm + o(1),\qquad k\to \infty.
\end{equation}
We can choose $\nu_* \in \partial U_n$ so that the straight line segment $[\tau_n, \nu_*]$ is admissible for any $p_k$, $k \ge 1$.
Note that $z_{k} \defl \nu_{*} - \tau_{n}(p_{k})\neq 0$ for any $k\ge 1$ and $z_{k}\to z_{\infty} \defl \nu_{*}-\tau_{n}(p_{\infty})$. With the parametrization $\lm_t  = \tau_n + tz_k$, $0 \le t \le 1$, and since by the definition of the standard root $\vs_{n}(\lm_{t}) = -z_{k}\sqrt[+]{t^{2}-\gm_{n}^{2}/4z_{k}^{2}}$, one then obtains
\[
  \int_{\tau_n}^{\nu_* } \frac{\lm_n^\ld - \lm}{\vs_n(\lm)} \, \chi_n(\lm) \, \dlm
   = z_k\int_0^1 \frac{t + (\tau_{n}-\lm_n^\ld)/z_{k}}{\sqrt[+]{t^{2} - \gm_{n}^{2}/4z_{k}^{2}}} \, \chi_n(\lm_t) \, \dt.
\]
As $\gm_{n}(p_{k})\to 0$, one has $\sqrt[+]{t^{2}-\gm_{n}^{2}/4z_{k}^{2}}\big|_{p_{k}}\to t$ and $t+(\tau_{n}-\lm_{n}^{\ld})\big|_{p_{k}} \to t$  by~\eqref{lmd-tau-gm}. To see that the integral converges to $\int_{\tau_{n}}^{\nu_{*}} \chi_{n}(\lm)\,\dlm\big|_{p_{\infty}}$ let $\ep_{k} = \gm_{n}(p_{k})/2z_{k}$. Then
\[
  \abs*{ \sqrt[+]{t^{2}-\ep_{k}^{2}} }
   \ge \sqrt{t^{2}-\abs{\ep_{k}}^{2}}
   \ge (t-\abs{\ep_{k}})^{1/2}(t+\abs{\ep_{k}})^{1/2},
\]
whereas by \eqref{lmd-tau-gm} $\abs{(\tau_{n}-\lm_{n}^{\ld})/z_{k}} \le \abs{\ep_{k}}$. Hence for any $0\le t \le 1$
\[
  \abs*{\frac{t + (\tau_{n}-\lm_n^\ld)/z_{k}}{\sqrt[+]{t^{2} - \gm_{n}^{2}/4z_{k}^{2}}} \, \chi_n(\lm_t)} \le Cg_{k}(t),\qquad g_{k}(t) =\frac{(t+\abs{\ep_{k}})^{1/2}}{(t-\abs{\ep_{k}})^{1/2}}.
\]
Since $g_{k}$ is in $L^{1}[0,1]$ for any $k\ge 1$ and converges in $L^{1}[0,1]$ to the constant function $1$ as $k\to \infty$, it follows from the generalized dominated convergence theorem that
\begin{align*}
  \lim_{k\to\infty} z_k\int_0^1 \frac{t + (\tau_{n}-\lm_n^\ld)/z_{k}}{\sqrt[+]{t^{2} - \gm_{n}^{2}/4z_{k}^{2}}} \, \chi_n(\lm_t) \, \dt
  &= z_{\infty}\int_{0}^{1} \chi_{n}(\lm_{t})\,\dt\bigg|_{p_{\infty}}\\
  &= \int_{\tau_{n}}^{\nu_{*}} \chi_{n}(\lm)\dlm\bigg|_{p\infty}.
\end{align*}
In view of \eqref{A.6} it then follows that $\lim_{k \to \infty} F_n(p_k) = F_n(p_{\infty})$.
Altogether we have shown that $F_n$ is continuous on $\Wp_q$. The claimed analyticity then follows from \cite[Theorem A.6]{Grebert:2014iq}.\qed
\end{proof}

\section{Appendix - Auxiliary results}

\begin{lem}
\label{ana-vanish}
Let $E_r$ be a real Banach space and denote by $E$ its complexification. Assume that $U \subset E$ is an open connected neighborhood of $U_r = U \cap E_r$ and that $f\colon U \to \C$ is an analytic map. If $f\vert_{U_r} = 0$, then $f \equiv 0$.\fish
\end{lem}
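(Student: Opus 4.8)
The statement is the standard identity theorem for analytic functions on a complexified real Banach space, so the plan is to reduce it to the one-variable identity theorem along complex lines. First I would fix a point $p_0 \in U_r$ where $f$ vanishes together with all its Gâteaux derivatives: indeed, for any $h \in E_r$, the curve $t \mapsto p_0 + th$ lies in $U_r$ for $t$ real and small, so $t \mapsto f(p_0 + th)$ is a real-analytic function of the real variable $t$ that vanishes identically on a real interval; hence all its $t$-derivatives at $t=0$ vanish, which says $d^k f(p_0)[h,\dots,h] = 0$ for every real $h$ and every $k$. By polarization and $\C$-multilinearity of the derivatives $d^k f(p_0)$ (these are bounded symmetric $\C$-multilinear forms on $E$, since $f$ is analytic hence Fréchet-analytic), it follows that $d^k f(p_0) = 0$ as a multilinear form on all of $E$, for every $k \ge 0$.

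Next I would use the power series expansion of $f$ at $p_0$: since $f$ is analytic, there is a ball $B(p_0, r) \subset U$ on which $f(p_0 + z) = \sum_{k \ge 0} \frac{1}{k!} d^k f(p_0)[z^{(k)}]$ converges. Because every coefficient vanishes, $f \equiv 0$ on $B(p_0, r)$. This shows that the set $Z = \{ p \in U : f \text{ vanishes in a neighborhood of } p \}$ is nonempty and open. The remaining point is that $Z$ is also closed in $U$: if $p_n \to p \in U$ with $p_n \in Z$, then $f$ and all its derivatives vanish at each $p_n$; by continuity of $f$ and of all the derivative maps $q \mapsto d^k f(q)$ (again a consequence of analyticity), all derivatives of $f$ vanish at $p$, and the power-series argument at $p$ gives $f \equiv 0$ near $p$, so $p \in Z$. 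Since $U$ is connected, $Z = U$, i.e. $f \equiv 0$.

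The only genuine subtlety — and the step I expect to require the most care — is the passage from vanishing of the real directional derivatives to vanishing of the full complex multilinear derivative forms: one must invoke that the Fréchet derivatives $d^k f(p_0)$ of an analytic map between complex Banach spaces are $\C$-multilinear, observe that they are determined by their diagonal values $d^k f(p_0)[h^{(k)}]$ via the polarization formula, and note that the real span of $E_r$ together with $\C$-homogeneity forces a $\C$-multilinear form vanishing on $E_r^k$ to vanish on $E^k = (E_r + iE_r)^k$. All of this is standard (see e.g.\ the treatment of analytic maps in \cite{Kappeler:2003up} or \cite{Grebert:2014iq}); alternatively one can avoid multilinear algebra entirely by the slicing argument: for $p \in U$ and $v \in E$, the function $\zeta \mapsto f(p + \zeta v)$ is holomorphic on an open connected subset of $\C$ containing a real interval on which it vanishes (when $p, v$ are suitably chosen real points), hence vanishes identically, and a connectedness/covering argument in $U$ then propagates the vanishing; I would present whichever of the two is shortest given the conventions already in force in the paper.
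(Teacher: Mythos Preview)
Your proposal is correct and follows essentially the same approach as the paper's proof: vanish the real directional derivatives at a point of $U_r$, use symmetry and $\C$-multilinearity of the Fr\'echet derivatives to extend to all of $E$, conclude local vanishing from the Taylor series, and propagate via connectedness. The paper is terser---it simply invokes ``the identity theorem'' for the final propagation step rather than writing out the open-closed argument on $Z$---but the substance is the same; your additional remarks on polarization and the alternative slicing argument are accurate but go slightly beyond what the paper records.
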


\begin{proof}
Let $u \in U_r$. Then near $u$, $f$ is represented by its Taylor series,
\[
  f(u+h) = \sum_{n \ge 0} \frac{1}{n!} \ddd_u^n f(h, \ldots, h)
\]
which converges absolutely and uniformly (cf. e.g. \cite[Theorem A.4]{Kappeler:2003up}). As $f\vert_{U_r} = 0$, it follows that for any $h \in E_r$ and any $n\ge 1$, $d_u^n f(h, \ldots, h) = 0$. As $f$ is analytic, $d_u^n f$ is symmetric and $\C$-multilinear. Hence it follows that
\[
  d_u^n f(h, \ldots, h) = 0,\qquad h\in E,\quad n\ge 0,
\]
which implies that $f \equiv 0$ in a neighborhood $V_u$ of $u$ with $V_u \subset U$. As $U$ is connected it follows from the identity theorem that $f\equiv 0$.\qed
\end{proof}

\end{appendix}


\begin{thebibliography}{99}
\small

\bibitem{Bona:1975hn}
Bona, J.L., Smith, R.: {\em The initial-value problem for the Korteweg-de Vries
  equation}.
\newblock Philos. Trans. Roy. Soc. London Ser. A \textbf{278}(1287), 555--601 (1975)

\bibitem{Colliander:2001wg}
Colliander, J., Keel, M., Staffilani, G., Takaoka, H., Tao, T.: {\em Global
  well-posedness for KdV in Sobolev spaces of negative index}.
\newblock Electron. J. Differential Equations \textbf{2001}(26), 1--7 (2001)

\bibitem{Colliander:2003fv}
Colliander, J., Keel, M., Staffilani, G., Takaoka, H., Tao, T.: {\em Sharp 
  global well-posedness for {K}d{V} and modified {K}d{V} on {$\mathbb{R}$} and {$\mathbb{T}$}}.
\newblock J. Amer. Math. Soc. \textbf{16}(3), 705--749 (2003)

\bibitem{Colliander:2004gc}
Colliander, J., Keel, M., Staffilani, G., Takaoka, H., Tao, T.: {\em Multilinear
  estimates for periodic KdV equations, and applications}.
\newblock J. Funct. Anal. \textbf{211}(1), 173--218 (2004)


\bibitem{Djakov:2006ba}
Djakov, P., Mityagin, B.: {\em Instability zones of periodic 1-dimensional
  Schr{\"o}dinger and Dirac operators}.
\newblock Russian Math. Surveys \textbf{61}(4), 663--105 (2006)

\bibitem{Flaschka:1976tc}
Flaschka, H., McLaughlin, D.W.: {\em Canonically conjugate variables for the
  Korteweg-de Vries equation and the Toda lattice with periodic boundary
  conditions}.
\newblock Progr. Theoret. Phys. \textbf{55}(2), 438--456 (1976)

\bibitem{Grebert:2014iq}
Gr{\'e}bert, B., Kappeler, T.: {\em The defocusing {NLS} equation and its normal
  form}.
\newblock European Mathematical Society (EMS), Z\"urich (2014)

\bibitem{Kappeler:CNzeErmy}
Kappeler, T., Maspero, A., Molnar, J.C., Topalov, P.: {\em On the convexity of the KdV Hamiltonian}.
\newblock preprint (2015)

\bibitem{Kappeler:1999er}
Kappeler, T., Mityagin, B.: {\em Gap estimates of the spectrum of Hill's equation
  and action variables for KdV}.
\newblock Trans. Amer. Math. Soc. \textbf{351}(2), 619--646 (1999)

\bibitem{Kappeler:2001hsa}
Kappeler, T., Mityagin, B.: {\em Estimates for periodic and Dirichlet eigenvalues
  of the Schr\"odinger operator}.
\newblock SIAM J. Math. Anal. \textbf{33}(1), 113--152 (2001)

\bibitem{Kappeler:2005fb}
Kappeler, T., M{\"o}hr, C., Topalov, P.: {\em Birkhoff coordinates for KdV on phase spaces of distributions}.
\newblock {Selecta Math. (N.S.)}, \textbf{11}(1), 37--98, 2005.

\bibitem{Kappeler:2003up}
Kappeler, T., P{\"o}schel, J.: {\em KdV {\&} KAM}.
\newblock Springer (2003)

\bibitem{Kappeler:2009uk}
Kappeler, T., P{\"o}schel, J.: {\em On the periodic KdV equation in weighted
  Sobolev spaces}.
\newblock Ann. Inst. H. Poincar\'e Anal. Non Lin\'eaire \textbf{26}(3),
  841--853 (2009)

\bibitem{Kappeler:2012fa}
Kappeler, T., Schaad, B., Topalov, P.: {\em Asymptotics of spectral quantities of
  Schr\"odinger operators}.
\newblock Spectral geometry, 243-284, Proc. Sympos. Pure Math. \textbf{84} Amer. Math. Soc. (2012)

\bibitem{Korotyaev:2000tc}
Korotyaev, E.: {\em Estimates for the Hill operator. I}.
\newblock J. Differential Equations \textbf{162}(1), 1--26 (2000)

\bibitem{Korotyaev:2006uh}
Korotyaev, E.: {\em Estimates for the Hill operator. II}.
\newblock J. Differential Equations \textbf{223}(2), 229--260 (2006)

\bibitem{Marcenko:1975wf}
Mar{\v c}enko, V.A., Ostrovski{\u \i}, I.V.: {\em A characterization of the
  spectrum of Hill's operator}.
\newblock Math. USSR-Sb. \textbf{26}(4), 493--554 (1975)

\bibitem{McKean:1975fz}
McKean, H.P., van Moerbeke, P.: {\em The spectrum of Hill's equation}.
\newblock Invent. Math. \textbf{30}(3), 217--274 (1975)

\bibitem{McKean:1997ka}
McKean, H.P., Vaninsky, K.L.: {\em Action-angle variables for the cubic
  Schr{\"o}dinger equation}.
\newblock Comm. Pure Appl. Math. \textbf{50}(6), 489--562 (1997)

\bibitem{Molnar:2014vg}
Molnar, J.C.: {\em New estimates of the nonlinear Fourier transform for the
  defocusing NLS equation}.
\newblock Int. Mat. Res. Not. Advance Access (2014)

\bibitem{Poschel:2011iua}
P{\"o}schel, J.: {\em Hill's potentials in weighted Sobolev spaces and their
  spectral gaps}.
\newblock Math. Ann. \textbf{349}(2), 433--458 (2011)

\bibitem{Poschel:1987uc}
P{\"o}schel, J., Trubowitz, E.: {\em Inverse spectral theory}.
\newblock Academic Press, Boston (1987)

\end{thebibliography}
\end{document}